 \newtheorem{thm}{Theorem}[section]
 \newtheorem{abcdef}{Theorem}[section]
 \newtheorem{lem}[thm]{Lemma}
 \newtheorem{prop}[thm]{Proposition}
 \newtheorem{abcd}{Theorem}[section]
 \theoremstyle{remark}
 \newtheorem{remark}[abcd]{Remark}
 \theoremstyle{definition}
 \newtheorem{definition}[abcdef]{Definition}
 \numberwithin{equation}{section}
\begin{document}

\title[Optimal $L^2$ extension from subvarieties]
{Optimal $L^2$ extension of sections from subvarieties in weakly
pseudoconvex manifolds}

\author{Xiangyu Zhou, Langfeng Zhu}

\address{Xiangyu Zhou: Institute of Mathematics, AMSS, and Hua Loo-Keng Key Laboratory of Mathematics,
Chinese Academy of Sciences, Beijing 100190, China}
\email{xyzhou@math.ac.cn}

\address{Langfeng Zhu: School of Mathematics and Statistics, Wuhan University, Wuhan 430072, China}
\email{zhulangfeng@amss.ac.cn}

\thanks{Zhou was partially supported by the National Natural Science Foundation of China. Zhu was partially supported by the National Natural Science Foundation of China
(No. 11201347 and No. 11671306) and the China Scholarship Council.}

\keywords{optimal $L^2$ extension, plurisubharmonic function,
multiplier ideal sheaf, strong openness, weakly pseudoconvex
manifold, K\"{a}hler manifold}


\begin{abstract}
In this paper, we obtain optimal $L^2$ extension of holomorphic
sections of a holomorphic vector bundle from subvarieties in weakly
pseudoconvex K\"{a}hler manifolds. Moreover, in the case of line
bundle the Hermitian metric is allowed to be singular .
\end{abstract}

\maketitle


\section{Introduction and main results}\label{section-introduction}

The $L^2$ extension problem is an important topic in several complex variables and complex geometry. Many generalizations and applications have been obtained since the original work of Ohsawa and Takegoshi (\cite{Ohsawa-Takegoshi}). A recent progress is about the optimal $L^2$ extension and its applications.

Most recently, several general $L^2$ extension theorems with optimal estimates were proved in \cite{Guan-Zhou2013b} for holomorphic sections defined on subvarieties in Stein or projective manifolds. In \cite{Demailly2015}, several $L^2$ extension theorems were obtained for holomorphic sections defined on subvarieties in weakly pseudoconvex K\"{a}hler manifolds.

In this paper, we prove an optimal $L^2$ extension theorem, which generalizes the main theorems in \cite{Guan-Zhou2013b} to weakly pseudoconvex K\"{a}hler manifolds and optimizes a main theorem in \cite{Demailly2015} (cf. Theorem 2.8 and Remark 2.9 in \cite{Demailly2015}).

Let us recall some definitions in \cite{Demailly2015}.

\begin{definition}
A function $\psi:\,X\longrightarrow[-\infty,+\infty)$ on a complex manifold $X$ is said to be quasi-plurisubharmonic if $\psi$ is locally the sum of a plurisubharmonic function and a smooth function. In addition,
we say that $\psi$ has neat analytic singularities if every point $x\in X$ possesses an open
neighborhood $U$ on which $\psi$ can be written as
\[\psi = c\log\sum\limits_{1\leq j\leq j_0}|g_j|^2+u,\]
where $c$ is a nonnegative number, $g_j\in\mathcal{O}_X(U)$ and $u\in C^\infty(U)$.
\end{definition}

\begin{definition}
If $\psi$ is a quasi-plurisubharmonic function on a complex manifold $X$, the multiplier
ideal sheaf $\mathcal{I}(\psi)$ is the coherent analytic subsheaf of $\mathcal{O}_X$ defined by
\[\mathcal{I}(\psi)_x=\{f\in \mathcal{O}_{X,x}:\,\exists\, U\ni x,\,\int_U|f|^2e^{-\psi}d\lambda<+\infty\},\]
where $U$ is an open coordinate neighborhood of $x$, and $d\lambda$ is the Lebesgue measure in the corresponding open chart of $\mathbb{C}^n$. We say that the singularities of $\psi$ are log canonical
along the zero variety $Y=V(\mathcal{I}(\psi))$ if $\mathcal{I}((1-\varepsilon)\psi)\big|_Y=\mathcal{O}_X\big|_Y$ for every $\varepsilon>0$.
\end{definition}

If $\omega$ is a K\"{a}hler metric on $X$, we let $dV_{X,\omega}:=\frac{\omega^n}{n!}$ be the corresponding K\"{a}hler volume element, where $n=\dim X$. In case $\psi$ has log canonical singularities along $Y=V(\mathcal{I}(\psi))$, one can associate in a natural way a measure $dV_{X,\omega}[\psi]$ on the set $Y^0=Y_{\mathrm{reg}}$ of regular points of $Y$ as follows.

\begin{definition}\label{d:measure}
If $g \in C_c(Y^0)$ is a compactly supported nonnegative continuous function on $Y^0$ and $\widetilde{g}$ is a compactly supported nonnegative continuous extension of $g$ to $X$ such that $(\mathrm{supp}\,\widetilde{g})\cap Y\subset Y^0$, then we set
\[\int_{Y^0}g\,dV_{X,\omega}[\psi]=\varlimsup\limits_{t\rightarrow-\infty}\int_{\{x\in X:\,t<\psi(x)<t+1\}}\widetilde{g}e^{-\psi}dV_{X,\omega}.\]
\end{definition}

\begin{remark}
By Hironaka's desingularization theorem \ref{l:Hironaka}, it is not
hard to see that the limit in the above definition does not depend
on the extension $\widetilde{g}$ and then $dV_{X,\omega}[\psi]$ is
well defined on $Y^0$ (see Proposition 4.5 in \cite{Demailly2015}
for a proof).
\end{remark}

\begin{remark}\label{r:definition-of-measure}
The definition of $dV_{X,\omega}[\psi]$ here has a slight difference with the one in \cite{Guan-Zhou2013b}. In fact, if we denote the measure in \cite{Guan-Zhou2013b} by $d\widehat{V}_{X,\omega}[\psi]$, the integral $\int_{Y^0}g\,dV_{X,\omega}[\psi]$ here is equal to
\[\sum\limits_{1\leq j\leq n}\frac{\pi^j}{j!}\int_{Y_{n-j}}g\,d\widehat{V}_{X,\omega}[\psi],\]
where $Y_{n-j}$ is the $(n-j)$-dimensional component of $Y_{\mathrm{reg}}$.
\end{remark}

We will define a class of functions before the statement of our main theorem.

\begin{definition}
Let $\alpha_0\in(-\infty,+\infty]$ and $\alpha_1\in[0,+\infty)$.
When $\alpha_0\neq+\infty$, let $\mathfrak{R}_{\alpha_0,\alpha_1}$ be the class of functions defined by
\begin{align*}
\big\{R\in&
C^{\infty}(-\infty,\alpha_0]:\text{ }R>0,\text{ }R\text{ is decreasing near }-\infty,
\\&\varlimsup\limits_{t\rightarrow-\infty} e^tR(t)<+\infty,
\text{ }C_R:=\int_{-\infty}^{\alpha_0}\frac{1}{R(t)}dt<+\infty\text{ and}
\\&\int_t^{\alpha_0}\bigg(\frac{\alpha_1}{R(\alpha_0)}+\int_{t_2}^{\alpha_0}\frac{dt_1}{R(t_1)}\bigg)dt_2+\frac{(\alpha_1)^2}{R(\alpha_0)}
<R(t)\bigg(\frac{\alpha_1}{R(\alpha_0)}+\int_t^{\alpha_0}\frac{dt_1}{R(t_1)}\bigg)^2
\\&\text{for all }t\in(-\infty,\alpha_0)\big\}.
\end{align*}
When $\alpha_0=+\infty$, we replace $R\in C^\infty(-\infty,\alpha_0]$ with $R\in C^\infty(-\infty,+\infty)$ and $R(+\infty):=\varliminf\limits_{t\rightarrow+\infty} R(t)\in(0,+\infty]$ in the above definition of $\mathfrak{R}_{\alpha_0,\alpha_1}$.
\end{definition}

\begin{remark}\label{r:defintion-of-function-class}
The number $\alpha_0$, $\alpha_1$ and the function $R(t)$ are equal to the number $A$, $\frac{1}{\delta}$ and the function $\frac{1}{c_A(-t)e^t}$ defined just before the main theorems in \cite{Guan-Zhou2013b}. If $\alpha_0\neq+\infty$ and $R$ is decreasing on $(-\infty,\alpha_0]$, the longest inequality in the definition of $\mathfrak{R}_{\alpha_0,\alpha_1}$ holds for all $t\in(-\infty,\alpha_0)$. If $\alpha_0=+\infty$, the longest inequality in the definition of $\mathfrak{R}_{\alpha_0,\alpha_1}$ implies that
$\int_t^{+\infty}\frac{\alpha_1}{R(+\infty)}dt_2<+\infty$ for all $t\in(-\infty,+\infty)$. Therefore, $\frac{\alpha_1}{R(+\infty)}=0$, i.e., $\alpha_1=0$ or $R(+\infty)=+\infty$.
\end{remark}

\begin{thm}[The main theorem]\label{t:Zhou-Zhu1}
Let $R\in\mathfrak{R}_{\alpha_0,\alpha_1}$. Let
$(X,\omega)$ be a weakly pseudoconvex complex $n$-dimensional
manifold possessing a K\"{a}hler metric $\omega$, and $\psi$ be a quasi-plurisubharmonic function on $X$ with neat analytic singularities. Let
$Y$ be the analytic subvariety of $X$ defined by $Y=V(\mathcal{I}(\psi))$ and assume that $\psi$ has log canonical singularities along $Y$.
Let $L$ (resp. $E$) be a holomorphic line bundle (resp. a holomorphic vector bundle) over $X$ equipped with a
singular Hermitian metric $h=h_L$ (resp. a smooth Hermitian metric $h=h_E$), which is written locally as
$e^{-\phi_L}$ for some quasi-plurisubharmonic function $\phi_L$
with respect to a local holomorphic frame of $L$. Assume that
\begin{enumerate}
\item[$(i)$]
$\sqrt{-1}\Theta_h+\sqrt{-1}\partial\bar\partial\psi$ is semi-positive on $X\setminus\{\psi=-\infty\}$ in the sense of currents (resp. in the sense of Nakano),
\end{enumerate}
and that there is a continuous function
$\alpha<\alpha_0$ on $X$ such that the following two assumptions hold:
\begin{enumerate}
\item[$(ii)$]
$\sqrt{-1}\Theta_h+\sqrt{-1}\partial\bar\partial\psi
+\frac{1}{\widetilde{\chi}(\alpha)}\sqrt{-1}\partial\bar\partial\psi$ is semi-positive on $X\setminus\{\psi=-\infty\}$ in the sense of currents
(resp. in the sense of Nakano),
\item[$(iii)$]
$\psi\leq\alpha$,
\end{enumerate}
where $\widetilde{\chi}(t)$ is the function
\begin{equation}\label{e:definition of chi}
\frac{\int_t^{\alpha_0}\big(\frac{\alpha_1}{R(\alpha_0)}+\int_{t_2}^{\alpha_0}\frac{dt_1}{R(t_1)}\big)dt_2+\frac{(\alpha_1)^2}{R(\alpha_0)}}
{\frac{\alpha_1}{R(\alpha_0)}+\int_t^{\alpha_0}\frac{dt_1}{R(t_1)}}.
\end{equation}
Then for every section $f\in H^0\big(Y^0,(K_X\otimes L)\big|_{Y^0}\big)$ (resp. $f\in H^0\big(Y^0,(K_X\otimes E)\big|_{Y^0}\big)$) on $Y^0=Y_{\mathrm{reg}}$ such that
\begin{equation}\label{ie:thm-f-finite}
\int_{Y^0}|f|^2_{\omega,h} dV_{X,\omega}[\psi]<+\infty,
\end{equation}
there exists a section $F\in H^0(X,K_X\otimes L)$ (resp. $F\in H^0(X,K_X\otimes E)$) such that $F=f$ on $Y^0$ and
\begin{equation}\label{ie:final estimate}
\int_X\frac{|F|^2_{\omega,h}}{e^{\psi}R(\psi)} dV_{X,\omega}
\leq \bigg(\frac{\alpha_1}{R(\alpha_0)}+C_R\bigg)\int_{Y^0}|f|^2_{\omega,h} dV_{X,\omega}[\psi].
\end{equation}
\end{thm}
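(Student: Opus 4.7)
The plan is to combine the optimal twisted $L^{2}$ method of Guan--Zhou with Demailly's framework for $L^{2}$ extension on weakly pseudoconvex K\"{a}hler manifolds. The argument has four parts: reducing to smooth data via desingularization and regularization, exhausting $X$ by weakly pseudoconvex sublevel sets, solving a twisted $\overline{\partial}$-equation on each piece via a Bochner--Kodaira--Nakano estimate with weights built from $R$ and $\widetilde{\chi}$, and a log canonical limit argument to recover $f$ on $Y^{0}$.

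First I would reduce to smooth data. Hironaka's theorem produces a proper modification $\pi\colon\widetilde{X}\to X$ such that $\pi^{\ast}\psi$ has simple normal crossing divisor singularities plus a smooth term; a singular $\phi_{L}$ is simultaneously approximated from above by a decreasing sequence of smooth quasi-psh weights in the sense of Demailly; and $X$ is exhausted by a nested sequence of relatively compact weakly pseudoconvex sublevel sets $X_{c}=\{\rho<c\}$ of a smooth psh exhaustion $\rho$. This reduces the theorem to an $L^{2}$ extension problem with smooth data on each $X_{c}$, from which the singular statement is recovered by weak compactness and a diagonal extraction.

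On each $X_{c}$, I would build a $C^{\infty}$ quasi-extension $\widetilde{f}$ of $f$ by a partition-of-unity construction near $Y^{0}$, and then look for $F=\widetilde{f}-u$ with $\overline{\partial}u=\overline{\partial}\widetilde{f}$. To obtain an optimal $L^{2}$ estimate on $u$ I would apply the twisted Bochner--Kodaira--Nakano identity with twist factors $\eta(\psi)$ and $\lambda(\psi)$ built from a primitive of $1/R$ and from $\widetilde{\chi}$, chosen so that assumption $(ii)$ yields exactly the pointwise positivity needed to absorb the cross term in the Bochner identity; cutting off with a factor involving $\chi_{t}(\psi)$ for a family of truncations converging to $1_{\{\psi<t\}}$ localizes the problem near $Y$. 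The long inequality in the definition of $\mathfrak{R}_{\alpha_{0},\alpha_{1}}$ is precisely the ODE inequality guaranteeing that this choice of twist is admissible, and the constant $\alpha_{1}/R(\alpha_{0})+C_{R}$ drops out when one integrates $1/R(\psi)$ across the layer $\{t<\psi<t+1\}$ and passes $t\to-\infty$, using the normalization of $dV_{X,\omega}[\psi]$ recorded in Definition \ref{d:measure} and Remark \ref{r:definition-of-measure}. I expect this step to be the main obstacle: on a weakly pseudoconvex (non-Stein) manifold one does not have Stein-style H\"{o}rmander existence for free, so one must add a complete K\"{a}hler correction such as $\varepsilon\sqrt{-1}\partial\overline{\partial}\rho$ to $\omega$ and control the resulting boundary terms on $\partial X_{c}$ while still preserving the optimal constant, which is the essential refinement over Theorem 2.8 of \cite{Demailly2015}.

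Finally, letting $c\to\infty$ and peeling off the regularizations by Banach--Alaoglu weak-$L^{2}$ extraction and monotone convergence produces a holomorphic $F$ on $X$ satisfying \eqref{ie:final estimate}. The identity $F|_{Y^{0}}=f$ is forced by the log canonical hypothesis together with the strong openness of multiplier ideal sheaves: the finiteness of $\int|u|^{2}e^{-\psi}\,dV_{X,\omega}$ combined with $\mathcal{I}((1-\varepsilon)\psi)|_{Y}=\mathcal{O}_{X}|_{Y}$ for all $\varepsilon>0$ forces $u$ to vanish along $Y^{0}$, so that $F=\widetilde{f}=f$ there; the vector bundle case differs only in that curvature is interpreted in the Nakano sense and the Hermitian metric is smooth, which simplifies rather than complicates the regularization in the first step.
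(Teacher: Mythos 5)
Your high-level outline — desingularize, exhaust by sublevel sets, build a smooth quasi-extension, solve a twisted $\overline{\partial}$-equation with weights built from $R$ and $\widetilde{\chi}$, and extract limits — matches the overall architecture of the paper (Demailly's framework plus the Guan--Zhou ODE system). But two ingredients that the paper treats as the central difficulty are missing or mislocated in your plan, and without them the argument would break down precisely in the case where $h_L$ is singular.

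First, you propose to build a \emph{single} smooth quasi-extension $\widetilde{f}$ by partition of unity and solve $\overline{\partial}u=\overline{\partial}\widetilde{f}$. For a smooth metric $h_E$ this is fine (the paper's Step 6), but for singular $h_L$ it is not: one needs to control $\int_{\{t-c_1<\psi<t+c_2\}}\frac{e^t|\widetilde{f}|^2 e^{-\phi_L}}{(e^\psi+e^t)^2}$ and $\int_{\{\psi<t+c_2\}}\frac{|\overline{\partial}\widetilde{f}|^2 e^{-\phi_L}}{e^{\psi}}$ uniformly as $t\to-\infty$, and a fixed $\widetilde{f}$ gives no such control when $\phi_L$ is unbounded near $Y$. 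The paper instead constructs a whole \emph{family} $\tilde{f}_t$ of smooth extensions, where each local holomorphic piece $\tilde{f}_{i,t}$ is itself obtained by applying the Stein case of the theorem plus Proposition \ref{p:special-L2-extension} on the shrinking tube $\{\psi<t+c_2\}$, so that estimates like $(\ref{ie:special-extension-a})$ and $(\ref{ie:special-extension-bb})$ hold with explicit $t$-dependence. This family, not a single extension, is what makes the cutoff argument with $\theta(e^t/(e^\psi+e^t))$ survive the singular weight and produce the sharp constant via Proposition \ref{p:convergence-integral-psh} (the layer-limit statement you gesture at but do not actually establish).

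Second, you assign the strong openness property the wrong role. You claim it is used together with log canonicity to force $u|_{Y^0}=0$; in the paper that vanishing comes from the non-integrability of $e^{-\psi}$ along $Y$ combined with local elliptic regularity of $u_{k,\epsilon,t}+s_{k,\epsilon,t}$, and has nothing to do with strong openness. Strong openness (Theorem \ref{l:strong-openness}) is used at the very start of Step 1 to upgrade the hypothesis $(\ref{ie:thm-f-finite})$ to local $e^{-(1+\beta)\phi}$-integrability for some $\beta>0$; that extra exponent is what powers the H\"older estimates for $I_{2,t}$ in Step 4 and the dominated-convergence argument in Proposition \ref{p:convergence-integral-psh}. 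Without it the uniform estimates on the family $\tilde{f}_{i,t}$ do not close. Finally, a smaller point: you speak of controlling boundary terms on $\partial X_c$ after adding $\varepsilon\sqrt{-1}\partial\overline{\partial}\rho$, but the paper avoids boundary terms entirely by passing to the complete K\"ahler manifold $X_k\setminus(\Sigma_0\cup\Sigma_{\rho_t})$ and using the error-term solvability lemma (Lemma \ref{l:Demailly-non complete metric}); the small positive parameter enters as $\delta=2e^{\beta_0 t}$ in $\mathrm{B}_t+\delta\mathrm{I}$, not as a metric perturbation. Also, the regularization of $\phi_L$ cannot be done naively on $X$: the paper first pulls back to the desingularized $\widetilde{X}$ (Step 2) so that $\psi\circ\mu$ has divisorial structure and the combined current $\sqrt{-1}\partial\overline{\partial}(\phi\circ\mu)+\gamma_2$ can be regularized by Theorem \ref{l:Demailly1994} while preserving both curvature hypotheses $(i)$ and $(ii)$ simultaneously.
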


\begin{remark}\label{r:thm-Stein}
The case of Theorem \ref{t:Zhou-Zhu1} when $X$ is Stein or
projective was proved in \cite{Guan-Zhou2013b} (see also Proposition
4.1 in \cite{Zhou-Zhu2015} for a simplified version). Hence Theorem
\ref{t:Zhou-Zhu1} can be regarded as a generalization of the main
theorems in \cite{Guan-Zhou2013b} to weakly pseudoconvex K\"{a}hler
manifolds. Then it is easy to see from Remark
\ref{r:definition-of-measure} and the main theorems in
\cite{Guan-Zhou2013b} that the constant
$\frac{\alpha_1}{R(\alpha_0)}+C_R$ in $(\ref{ie:final estimate})$ is
optimal. Hence Theorem \ref{t:Zhou-Zhu1} gives an optimal version of
a main theorem in \cite{Demailly2015} (cf. Theorem 2.8 and Remark
2.9 in \cite{Demailly2015}).
\end{remark}

\begin{remark}
In \cite{Zhou-Zhu2015}, Theorem \ref{t:Zhou-Zhu1} was proved for $L$ in the special case when $\psi=m\log|s|^2$, $\alpha_0=\alpha_1=0$ and $R$ is decreasing on $(-\infty,0]$, where $s$ is a global holomorphic section of some holomorphic vector bundle of rank $m$ over $X$ equipped with a smooth Hermitian metric, and $s$ is transverse to the zero section. Similarly as in \cite{Zhou-Zhu2015}, a global plurisubharmonic negligible weight can be added to Theorem \ref{t:Zhou-Zhu1} by adding another regularization process to Step 2 in Section \ref{section-proof-of-theorem}.
\end{remark}

\begin{remark}
In order to deal with the singular metric $h_L$ on the weakly
pseudoconvex K\"{a}hler manifold $X$, not only the regularization
theorem \ref{l:Demailly1994} and the error term method of solving
$\bar\partial$ equations (Lemma \ref{l:Demailly-non complete
metric}) are needed, but also a limit problem about $L^2$ integrals
with singular weights needs to be solved. We solve the limit problem
in Proposition \ref{p:convergence-integral-psh}. Then by using
Proposition \ref{p:special-L2-extension}, Proposition
\ref{p:convergence-integral-psh} and the strong openness property of
multiplier ideal sheaves (Theorem \ref{l:strong-openness}) as the
key tools, we construct a family of smooth extensions of $f$
satisfying some uniform estimates, and overcome the difficulty in
dealing with the singular metric (see also \cite{Zhou-Zhu2015} for
the special case).
\end{remark}

The rest sections of this paper are organized as follows. First, we give some results used in the proof of Theorem \ref{t:Zhou-Zhu1} in Section \ref{section-lemmas}. Then, we prove two key propositions in Section \ref{section-singular-metric} which will be used to deal with the singular metric $h_L$. Finally, we prove Theorem \ref{t:Zhou-Zhu1} in Section \ref{section-proof-of-theorem} by using the results in Section \ref{section-lemmas} and Section \ref{section-singular-metric}.


\section{Some results used in the proof of
Theorem \ref{t:Zhou-Zhu1}}\label{section-lemmas}

In this section, we give some results which will be used in the
proof of Theorem \ref{t:Zhou-Zhu1}.

\begin{lem}[\cite{Demailly99}, \cite{Demailly2015}]\label{l:Demailly-non complete metric}
Let $( X ,\omega)$ be a complete K\"{a}hler manifold equipped with a
(non necessarily complete) K\"{a}hler metric $\omega$, and let $(Q,h)$
be a holomorphic vector bundle over $ X $ equipped with a smooth Hermitian metric $h$. Assume that $\tau$ and $A$
are smooth and bounded positive functions on $ X $ and let
\[\mathrm{B}:=[\tau\sqrt{-1}\Theta_{Q,h}-\sqrt{-1}
\partial\bar\partial\tau
-\sqrt{-1}A^{-1}\partial\tau\wedge\bar\partial\tau,\Lambda ].\]
Assume that $\delta\geq0$ is a nonnegative number such that $\mathrm{B}+\delta
\mathrm{I}$ is semi-positive definite everywhere on
$\wedge^{n,q}T^*_ X \otimes Q$ for some $q\geq 1$. Then given a form
$g\in L^2( X ,\wedge^{n,q}T^*_ X \otimes Q)$ such that $\mathrm{D}''
g=0$ and
\[\int_ X \langle {(\mathrm{B}+\delta
\mathrm{I})}^{-1}g,g\rangle_{\omega,h} dV_{X,\omega}<+\infty,\]
there exists an
approximate solution $u\in L^2( X ,\wedge^{n,q-1}T^*_ X \otimes Q)$
and a correcting term $v\in L^2( X ,\wedge^{n,q}T^*_ X \otimes Q)$
such that $\mathrm{D}'' u+\sqrt{\delta}v=g$ and
\[\int_ X \frac{|u|^2_{\omega,h}}{\tau+A} dV_{X,\omega}+\int_X|v|^2_{\omega,h} dV_{X,\omega}\leq \int_ X
\langle {(\mathrm{B}+\delta \mathrm{I})}^{-1}g,g\rangle_{\omega,h} dV_{X,\omega}.\]
\end{lem}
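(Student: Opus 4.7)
The plan is to combine the twisted Bochner--Kodaira--Nakano identity with a Hahn--Banach / von Neumann duality argument in an enlarged Hilbert space, along the lines of H\"ormander's classical $L^2$-existence theorem. The role of the twist is to absorb $\partial\tau\wedge\bar\partial\tau$ into the curvature term $\tau\sqrt{-1}\Theta_{Q,h}$ via a Lagrange-type Cauchy--Schwarz trick; this produces exactly the operator $\mathrm{B}$ in the statement, while the weight $\tau+A$ appears naturally on the $(D'')^*$ side. The $\delta\mathrm{I}$ perturbation will then be incorporated by appending an extra direct summand to the source Hilbert space, which is the standard mechanism for producing the correction term $v$.

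More concretely, for every $\alpha\in C^\infty_c(X,\wedge^{n,q}T^*_X\otimes Q)$ the twisted Bochner inequality takes the form
\[
\int_X\langle\mathrm{B}\alpha,\alpha\rangle_{\omega,h}\,dV_{X,\omega}\leq \int_X\tau|D''\alpha|^2_{\omega,h}\,dV_{X,\omega}+\int_X(\tau+A)|(D'')^*\alpha|^2_{\omega,h}\,dV_{X,\omega}.
\]
I then introduce the Hilbert space $H:=L^2(X,\wedge^{n,q-1}T^*_X\otimes Q;(\tau+A)^{-1}dV_{X,\omega})\oplus L^2(X,\wedge^{n,q}T^*_X\otimes Q;dV_{X,\omega})$ together with the closed densely defined operator $T:H\to L^2(X,\wedge^{n,q}T^*_X\otimes Q;dV_{X,\omega})$ given by $T(u,v):=D''u+\sqrt\delta\,v$. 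Its formal adjoint is $T^*\alpha=\bigl((\tau+A)(D'')^*\alpha,\sqrt\delta\,\alpha\bigr)$, so $\|T^*\alpha\|_H^2=\int_X(\tau+A)|(D'')^*\alpha|^2_{\omega,h}\,dV_{X,\omega}+\delta\int_X|\alpha|^2_{\omega,h}\,dV_{X,\omega}$. Adding $\delta\int|\alpha|^2$ to both sides of the Bochner inequality and invoking the Cauchy--Schwarz inequality with the positive operator $\mathrm{B}+\delta\mathrm{I}$ gives, for any $\alpha\in\mathrm{Dom}(T^*)\cap\ker D''$,
\[
|\langle g,\alpha\rangle|^2\leq\Bigl(\int_X\langle(\mathrm{B}+\delta\mathrm{I})^{-1}g,g\rangle_{\omega,h}\,dV_{X,\omega}\Bigr)\cdot\|T^*\alpha\|_H^2.
\]
Since $D''g=0$, an orthogonal decomposition of $\alpha$ with respect to $\ker D''$ shows that the same bound extends to every $\alpha\in\mathrm{Dom}(T^*)$; the Hahn--Banach theorem then produces $(u,v)\in H$ with $T(u,v)=g$ and $\|(u,v)\|_H^2$ no larger than the right-hand side above, which unwinds to the claimed estimate.

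The main obstacle, as always in this setup, is that $\omega$ is not assumed complete, so $C^\infty_c$ need not be a core for $D''$ or $(D'')^*$ and the Hilbert-space manipulations above are not immediately justified. The classical workaround, which I would follow, is to exploit the assumed \emph{complete} K\"ahler metric $\widehat\omega$ on $X$ as regularizer: run the whole argument with $\omega$ replaced by $\omega_\varepsilon:=\omega+\varepsilon\widehat\omega$ (which is complete for every $\varepsilon>0$, so $C^\infty_c$ is a core), and pass to a weak limit as $\varepsilon\to 0$, using monotonicity of the pointwise norms of $(n,q)$-forms under metric enlargement together with Fatou's lemma to recover the estimate measured against the original $\omega$. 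A minor subtlety is that $\mathrm{B}$ depends on $\omega$ through $\Lambda$, but the semi-positivity of $\mathrm{B}+\delta\mathrm{I}$ is preserved along $\omega_\varepsilon$, so the argument goes through uniformly in $\varepsilon$.
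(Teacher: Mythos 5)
The paper does not prove Lemma~\ref{l:Demailly-non complete metric} itself; it is cited from \cite{Demailly99} and \cite{Demailly2015}. Your proposal is correct and coincides in all essentials with Demailly's own argument: the twisted Bochner--Kodaira inequality for compactly supported forms, the duality argument in the enlarged Hilbert space $H$ whose second summand produces the correcting term $v$ via $T(u,v)=\mathrm{D}''u+\sqrt\delta\,v$, the orthogonal decomposition across $\ker\mathrm{D}''$ to upgrade the estimate to all $\alpha\in\mathrm{Dom}\,T^*$, and the regularization $\omega_\varepsilon=\omega+\varepsilon\widehat\omega$ to dispense with completeness of $\omega$.

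Two points in the final regularization step are stated a little more casually than they deserve, and are precisely where the technical weight of the cited proof lies. First, preservation of semi-positivity of $\mathrm{B}+\delta\mathrm{I}$ as $\omega$ is replaced by $\omega_\varepsilon$ is not a ``minor subtlety'': one must write $\delta\mathrm{I}=\frac{\delta}{q}[\omega,\Lambda_\omega]$ on $(n,q)$-forms, so that $\mathrm{B}+\delta\mathrm{I}=[\theta+\frac{\delta}{q}\omega,\Lambda_\omega]$ with $\theta$ the $(1,1)$-form inside $\mathrm{B}$, and then invoke Demailly's monotonicity lemma, which says that $[\theta',\Lambda_\omega]\geq0$ on $\wedge^{n,q}T^*_X\otimes Q$ and $\gamma\geq\omega$ imply $[\theta',\Lambda_\gamma]\geq0$ together with $\langle[\theta',\Lambda_\gamma]^{-1}g,g\rangle_\gamma\,dV_\gamma\leq\langle[\theta',\Lambda_\omega]^{-1}g,g\rangle_\omega\,dV_\omega$. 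Your appeal to ``monotonicity of pointwise norms of $(n,q)$-forms'' covers only the latter inequality with $\theta'$ replaced by $\omega$, not the full statement needed here. Second, because the $(n,q)$-norms decrease as $\varepsilon\to0$, the weak limit of $(u_\varepsilon,v_\varepsilon)$ must be extracted in a fixed $\omega_{\varepsilon_0}$-weighted space for some $\varepsilon_0>0$ (where the family is uniformly bounded, again by monotonicity), after which the estimate against $\omega$ itself is recovered by letting $\varepsilon_0\to0$ via monotone convergence. These are fillable gaps, consistent with what the cited sources actually do, so the proposal is a faithful sketch of the standard proof.
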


\begin{thm}[Theorem 6.1 in \cite{Demailly94}]\label{l:Demailly1994}
Let $(X,\omega)$ be a complex manifold equipped with a Hermitian
metric $\omega$, and $\Omega\subset\subset X$ be an open subset.
Assume that $T=\widetilde{T}+\frac{\sqrt{-1}}{\pi}
\partial\bar\partial\varphi$ is a closed $(1,1)$-current on $X$,
where $\widetilde{T}$ is a smooth real $(1,1)$-form and $\varphi$ is
a quasi-plurisubharmonic function. Let $\gamma$ be a continuous real
$(1,1)$-form such that $T\geq \gamma$. Suppose that the Chern
curvature tensor of $T_X$ satisfies
\[(\sqrt{-1}\Theta_{T_X}+\varpi\otimes\mathrm{Id}_{T_X})
(\kappa_1\otimes\kappa_2,\kappa_1\otimes\kappa_2)
\geq0\quad(\forall\kappa_1,\kappa_2\in T_X\text{ with }\langle
\kappa_1,\kappa_2\rangle=0)\]
for some continuous nonnegative
$(1,1)$-form $\varpi$ on $X$. Then there is a family of closed
$(1,1)$-currents $T_{\varsigma,\rho}=\widetilde{T}
+\frac{\sqrt{-1}}{\pi}\partial\bar\partial \varphi_{\varsigma,\rho}$ defined
on a neighborhood of $\overline{\Omega}$ ($\varsigma\in(0,+\infty)$ and $\rho\in(0,\rho_1)$ for some
positive number $\rho_1$) independent of $\gamma$, such that
\begin{enumerate}
\item[$(i)$]
$\varphi_{\varsigma,\rho}$ is quasi-plurisubharmonic on a neighborhood of $\overline{\Omega}$,
smooth on $\Omega\setminus E_\varsigma(T)$, increasing with respect to
$\varsigma$ and $\rho$ on $\Omega$, and converges to $\varphi$ on
$\Omega$ as $\rho\rightarrow0$,
\item[$(ii)$]
$T_{\varsigma,\rho}\geq\gamma-\varsigma \varpi-\delta_\rho\omega$ on $\Omega$,
\end{enumerate}
where $E_\varsigma(T):=\{x\in X:\,\nu(T,x)\geq \varsigma\}$
$(\varsigma>0)$ is the $\varsigma$-upperlevel set of Lelong numbers,
and $\{\delta_\rho\}$ is an increasing family of positive numbers
such that $\lim\limits_{\rho\rightarrow 0}\delta_\rho=0$.
\end{thm}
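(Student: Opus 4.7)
The plan is to prove the result by a localize-and-reglue strategy: regularize $\varphi$ locally by a Bergman-kernel construction (with manifest positivity) and patch the local models via a regularized maximum. The approximation scale supplies the parameter $\rho$ and the loss $\delta_\rho\omega$; a Lelong-number truncation supplies $\varsigma$; and the cost of the patching, intrinsically measured by the Chern curvature of $T_X$, supplies the loss $\varsigma\varpi$.

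First I would fix a finite cover of a neighborhood of $\overline\Omega$ by holomorphic charts $(U_j,z_j)$ on which $\widetilde T$ admits a smooth potential, absorbed into $\varphi$ and still denoted so. For each integer $m\geq 1$, let $(\sigma_{j,k,m})_k$ be an orthonormal basis of the weighted Bergman space
\[\mathcal{H}_{j,m}:=\Big\{f\in\mathcal O(U_j):\int_{U_j}|f|^2e^{-2m\varphi}\,d\lambda_j<+\infty\Big\},\]
and set $\varphi_{j,m}(z):=(2m)^{-1}\log\sum_k|\sigma_{j,k,m}(z)|^2$. Then $\varphi_{j,m}$ is plurisubharmonic, and smooth outside the common zero set of the basis, which coincides with $E_{1/m}(T)$. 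The Ohsawa--Takegoshi extension theorem, applied to build peaking sections in $\mathcal H_{j,m}$, yields $\varphi_{j,m}\geq\varphi-C/m$, and the submean value inequality gives a matching upper bound; consequently $\varphi_{j,m}\to\varphi$ in $L^1_{\mathrm{loc}}$ as $m\to\infty$. I set $\rho:=1/m$.

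Next I would glue the $\varphi_{j,m}$ into a single function on a neighborhood of $\overline\Omega$. Choose smooth cutoffs $\chi_j$ adapted to the cover and a small scale $\eta>0$, and take $\varphi_{\varsigma,\rho}$ to be a regularized maximum $\mathrm{Max}_\eta\big((\varphi_{j,m}+\chi_j)_j\big)$, which is quasi-plurisubharmonic and smooth wherever a unique branch strictly dominates. The parameter $\varsigma$ enters by pre-truncating each $\varphi_{j,m}$ to excise the basis sections vanishing to order $\geq\varsigma m$ near the reference point, so that the resulting function is smooth precisely on $\Omega\setminus E_\varsigma(T)$. Monotonicity in $\rho$ follows from the inclusion of weighted Bergman spaces under $m\mapsto m'\geq m$; monotonicity in $\varsigma$ is immediate from weakening the truncation; and $\varphi_{\varsigma,\rho}\to\varphi$ as $\rho\to 0$ follows from the lower and upper bounds above.

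The hard part will be the curvature estimate $T_{\varsigma,\rho}\geq\gamma-\varsigma\varpi-\delta_\rho\omega$. On a single chart, a kernel-mean comparison with $T$ yields $T_{j,m}\geq\gamma-\delta_m\omega$ on $U_j$. Propagating this through the regularized maximum introduces an error whose leading part, once written in intrinsic coordinates on $X$, is controlled by the off-diagonal components $\sqrt{-1}\Theta_{T_X}(\kappa_1\otimes\kappa_2,\kappa_1\otimes\kappa_2)$ with $\langle\kappa_1,\kappa_2\rangle=0$ — exactly the tensor appearing in the hypothesis on $T_X$. Scaling by the truncation size $\varsigma$ converts this into the term $-\varsigma\varpi$. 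Showing that only this off-diagonal part of the Chern curvature enters the error, rather than the full Griffiths or Nakano tensor, is the delicate calculation at the heart of the argument, and is where the full strength of the curvature hypothesis on $T_X$ is needed.
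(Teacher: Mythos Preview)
The paper does not give its own proof of this statement: it is quoted verbatim as Theorem~6.1 of \cite{Demailly94} and used as a black box, with only the remark that the original compact-case argument carries over to relatively compact $\Omega\subset\subset X$. So there is no in-paper proof to compare against; the relevant comparison is with Demailly's original argument.

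On that score, your proposal takes a genuinely different route from \cite{Demailly94}. Demailly's 1994 proof does \emph{not} use Bergman-kernel approximation and a regularized maximum; it regularizes $\varphi$ by convolving along the fibres of the exponential map $\exp_h:T_X\to X$ associated to the Chern connection of $(T_X,\omega)$, i.e.\ one sets roughly
\[
\varphi_\rho(x)\;=\;\int_{\zeta\in T_{X,x}}\varphi\bigl(\exp_h(\zeta)\bigr)\,\chi\!\left(\tfrac{|\zeta|^2}{\rho^2}\right)\,d\lambda(\zeta),
\]
and then performs a Kiselman--Legendre transform in an auxiliary variable to produce the two-parameter family $\varphi_{\varsigma,\rho}$ with the Lelong-number truncation. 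The curvature loss $-\varsigma\varpi$ arises from differentiating this convolution: the second variation of $\varphi\circ\exp_h$ in the base direction involves precisely the holomorphic bisectional curvature terms $\sqrt{-1}\Theta_{T_X}(\kappa_1\otimes\kappa_2,\kappa_1\otimes\kappa_2)$ with $\langle\kappa_1,\kappa_2\rangle=0$, which is why the hypothesis on $T_X$ takes that specific form.

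Your Bergman-kernel-plus-regularized-max scheme is a legitimate regularization technique (it is essentially Demailly's \emph{other} approximation theorem), but the specific loss term $-\varsigma\varpi$ does not emerge from it in the way you describe. In the regularized-max construction the gluing error is governed by the transition data between coordinate charts and the Hessians of the cutoffs $\chi_j$; it has no intrinsic reason to reduce to the off-diagonal Chern curvature of $T_X$, and your sentence ``Propagating this through the regularized maximum introduces an error whose leading part\ldots is controlled by the off-diagonal components of $\sqrt{-1}\Theta_{T_X}$'' is asserted rather than shown. This is the gap: to recover the exact statement with the $\varpi$-term you would need to either carry out the Chern-connection-flow construction, or supply a genuinely new argument linking the patching error to bisectional curvature---and the latter is not part of the standard toolkit.
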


\begin{remark}
Although Lemma \ref{l:Demailly1994} is stated in \cite{Demailly94} in the case $X$ is compact,
almost the same proof as in \cite{Demailly94} shows that Lemma
\ref{l:Demailly1994} holds in the noncompact case while uniform
estimates are obtained only on the relatively compact subset
$\Omega$.
\end{remark}

\begin{lem}[Theorem 1.5 in \cite{Demailly82}]\label{l:Demailly1982-complete-metric}
Let $X$ be a K\"{a}hler
manifold, and $Z$ be an analytic subset of $X$. Assume that $\Omega$
is a relatively compact open subset of $X$ possessing a complete
K\"{a}hler metric. Then $\Omega\setminus Z$ carries a complete
K\"{a}hler metric.
\end{lem}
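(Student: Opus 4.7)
The plan is to start from the given complete Kähler metric $\omega_0$ on $\Omega$ and add to it a closed, semi-positive $(1,1)$-form that blows up fast enough near $Z\cap\overline{\Omega}$ to yield a new, complete Kähler metric on $\Omega\setminus Z$. Since $Z$ is analytic and $K:=Z\cap\overline{\Omega}$ is compact in $X$, I would first cover $K$ by finitely many open sets $U_1,\dots,U_N\subset X$ on each of which $Z\cap U_\alpha$ is cut out by holomorphic functions $g_{\alpha,1},\dots,g_{\alpha,m_\alpha}$. After shrinking, $|g_\alpha|^2:=\sum_j|g_{\alpha,j}|^2<e^{-1}$ on $U_\alpha$, so
\[
\phi_\alpha := -\log\bigl(-\log|g_\alpha|^2\bigr)\le 0
\]
is smooth on $U_\alpha\setminus Z$, tends to $-\infty$ along $Z\cap U_\alpha$, and is plurisubharmonic as the composition of the PSH function $\log|g_\alpha|^2$ with the convex increasing $t\mapsto -\log(-t)$. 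A direct computation gives
\[
i\partial\bar\partial\phi_\alpha = \frac{i\partial\bar\partial(-\log|g_\alpha|^2)}{-\log|g_\alpha|^2} + \frac{i\partial\log|g_\alpha|^2\wedge\bar\partial\log|g_\alpha|^2}{(\log|g_\alpha|^2)^2},
\]
both terms semi-positive; the second reduces, in the one-dimensional model $|g|=|z|$, to the Poincaré form $i\,dz\wedge d\bar z/(|z|\log|z|^2)^2$, whose radial length $\int_0^{r_0}dr/(r|\log r|)$ to the origin is infinite.

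Next I would choose shrunken neighborhoods $V_\alpha:=\{|g_\alpha|^2<\varepsilon_\alpha\}\cap U_\alpha$ for $\varepsilon_\alpha>0$ small enough that $\{V_\alpha\}$ still covers $K$, together with smooth cutoffs $\chi_\alpha\colon X\to[0,1]$ satisfying $\chi_\alpha\equiv 1$ on $V_\alpha$ and $\operatorname{supp}\chi_\alpha\subset\subset U_\alpha$. The candidate metric on $\Omega\setminus Z$ is then
\[
\widetilde\omega := C\omega_0 + \sum_{\alpha=1}^N i\partial\bar\partial(\chi_\alpha\phi_\alpha),
\]
which is smooth and closed. Expanding each term,
\[
i\partial\bar\partial(\chi_\alpha\phi_\alpha) = \chi_\alpha\, i\partial\bar\partial\phi_\alpha + i\partial\chi_\alpha\wedge\bar\partial\phi_\alpha + i\partial\phi_\alpha\wedge\bar\partial\chi_\alpha + \phi_\alpha\, i\partial\bar\partial\chi_\alpha,
\]
where the leading term is semi-positive. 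The three remaining terms vanish identically on $V_\alpha$ (since $\chi_\alpha$ is locally constant there), so they are supported on the compact set $\operatorname{supp}\chi_\alpha\setminus V_\alpha$, which is disjoint from $Z$ by construction. On this compact set $\phi_\alpha$ is uniformly bounded, hence the error is a bounded smooth $(1,1)$-form that can be dominated by $C\omega_0$ once $C$ is taken large enough, making $\widetilde\omega$ a smooth Kähler metric on $\Omega\setminus Z$.

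Completeness would then follow by a divergence dichotomy: if $\{x_k\}\subset\Omega\setminus Z$ is $\widetilde\omega$-Cauchy with no limit in $\Omega\setminus Z$, relative compactness of $\Omega$ in $X$ produces a subsequential limit $x_\infty\in\overline\Omega$. If $x_\infty\in\partial\Omega$, then $d_{\widetilde\omega}\ge d_{\omega_0}$ contradicts the completeness of $\omega_0$; if $x_\infty\in K$, then $x_\infty\in V_\alpha$ for some $\alpha$, and on a neighborhood of $x_\infty$ one has $\widetilde\omega\ge C\omega_0 + i\partial\bar\partial\phi_\alpha$, so the Poincaré-type lower bound from the first paragraph forces the $\widetilde\omega$-distance from $x_k$ to $x_\infty$ to diverge, again a contradiction.

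The main obstacle is the patching in the second paragraph: one must arrange the cutoffs $\chi_\alpha$ so that their transition regions $\operatorname{supp}\chi_\alpha\setminus V_\alpha$ genuinely avoid $Z$. Otherwise the $\phi_\alpha\, i\partial\bar\partial\chi_\alpha$ and cross terms would inherit the $-\infty$ singularity of $\phi_\alpha$ and could not be absorbed into $C\omega_0$. Using sublevel sets of $|g_\alpha|^2$ to define $V_\alpha$ is what makes this step go through cleanly.
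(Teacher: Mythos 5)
The paper gives no proof of this lemma; it is quoted as Theorem 1.5 of Demailly's 1982 paper, and your construction follows the same standard route as that reference: local potentials $\phi_\alpha=-\log(-\log|g_\alpha|^2)$ with Poincar\'e-type growth along $Z$, glued by cutoffs whose transition regions are pushed off $Z$ by taking $V_\alpha$ to be sublevel sets of $|g_\alpha|^2$, plus a large multiple of a background form. The outline is right, but one step has a genuine gap.

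The gap is the absorption step. The error terms $i\partial\chi_\alpha\wedge\bar\partial\phi_\alpha+i\partial\phi_\alpha\wedge\bar\partial\chi_\alpha+\phi_\alpha\,i\partial\bar\partial\chi_\alpha$ are supported on $\mathrm{supp}\,\chi_\alpha\setminus V_\alpha$, which is compact in $X$ but in general \emph{not} relatively compact in $\Omega$, since the charts $U_\alpha$ must cover $Z\cap\partial\Omega$. On such a set ``bounded'' can only mean bounded with respect to a background Hermitian metric of $X$, and a complete K\"ahler metric $\omega_0$ on a relatively compact $\Omega$ need not satisfy $\omega_0\geq c\,\omega_X$ uniformly up to $\partial\Omega$: already on the unit disc one can make $\lambda(z)\,i\,dz\wedge d\bar z$ complete by letting $\lambda$ be huge on a sequence of thin annuli accumulating at the boundary while $\lambda$ is arbitrarily small in between. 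Hence no finite $C$ makes $C\omega_0$ dominate the errors. The repair is to absorb them into $C\omega$, where $\omega$ is the K\"ahler metric of the ambient manifold $X$, i.e.\ take $\widetilde\omega=\omega_0+C\omega+\sum_\alpha i\partial\bar\partial(\chi_\alpha\phi_\alpha)$; one needs the background form to be closed for $\widetilde\omega$ to remain K\"ahler, and this is precisely where the hypothesis that $X$ is K\"ahler enters --- a hypothesis your argument never uses, which is the tell.

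Two smaller points. The displayed Hessian has a sign slip: the first term should be $\frac{i\partial\bar\partial\log|g_\alpha|^2}{-\log|g_\alpha|^2}$ (this is the semi-positive quantity), not $\frac{i\partial\bar\partial(-\log|g_\alpha|^2)}{-\log|g_\alpha|^2}$. And the completeness argument near $Z$ should not rest on the one-dimensional radial integral, which only applies to the model $|g|=|z|$; the robust statement is that the second Hessian term equals $i\partial\phi_\alpha\wedge\bar\partial\phi_\alpha$, so wherever $\widetilde\omega\geq i\partial\bar\partial\phi_\alpha$ the function $\phi_\alpha$ is $1$-Lipschitz for $d_{\widetilde\omega}$; since $\phi_\alpha\to-\infty$ on $Z$ and Cauchy sequences are metrically bounded, no $d_{\widetilde\omega}$-Cauchy sequence can accumulate on $Z\cap\Omega$, while $\widetilde\omega\geq\omega_0$ rules out accumulation on $\partial\Omega$.
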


\begin{lem}[Theorem 4.4.2 in \cite{Hormander}]\label{l:Hormander-dbar}
Let $\Omega$ be a pseudoconvex
open set in $\mathbb{C}^n$, and $\varphi$ be a plurisubharmonic
function on $\Omega$. For every $w\in L^2_{(p,q+1)}(\Omega,e^{-\varphi})$
with $\bar\partial w=0$ there is a solution $s\in
L^2_{(p,q)}(\Omega,\mathrm{loc})$ of the equation $\bar\partial s=w$
such that
\[\int_\Omega\frac{|s|^2}{(1+|z|^2)^2}e^{-\varphi}d\lambda\leq
\int_\Omega|w|^2e^{-\varphi}d\lambda,\]
where $d\lambda$ is the $2n$-dimensional Lebesgue measure on
$\mathbb{C}^n$.
\end{lem}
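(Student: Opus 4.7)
This is Hörmander's classical weighted $L^2$-estimate for $\bar\partial$ on a pseudoconvex domain with only plurisubharmonic (not necessarily strictly psh) weight; the factor $(1+|z|^2)^{-2}$ on the solution side compensates for the absence of strict positivity. I plan to follow the standard Hörmander strategy: reduce to smooth data on relatively compact pseudoconvex subdomains, establish a weighted a priori estimate via the Bochner--Kohn--Morrey--Kodaira identity using an auxiliary strictly plurisubharmonic weight, and recover the general case by a limiting argument.

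First, since $\Omega$ is pseudoconvex, I exhaust it by smoothly bounded strictly pseudoconvex subdomains $\Omega_\nu \Subset \Omega$ with $\bigcup_\nu \Omega_\nu = \Omega$. For each fixed $\nu$, I regularize $\varphi$ by convolution with a standard mollifier, obtaining a decreasing sequence of smooth plurisubharmonic functions $\varphi_\epsilon \in C^\infty(\overline{\Omega_\nu})$ converging pointwise to $\varphi$ on $\Omega_\nu$. If the desired estimate is established on each pair $(\Omega_\nu,\varphi_\epsilon)$ with a constant independent of $\nu$ and $\epsilon$, a diagonal weak-$L^2$ compactness extraction will then produce the solution on $(\Omega,\varphi)$.

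On $\Omega_\nu$, I introduce the auxiliary weight $\tilde\varphi_\epsilon := \varphi_\epsilon + 2\log(1+|z|^2)$, which is smooth and strictly plurisubharmonic, since its complex Hessian is bounded below by that of $2\log(1+|z|^2)$, whose smallest eigenvalue is $2/(1+|z|^2)^2$. Consequently, as an operator on $(p,q+1)$-forms, the inverse Hessian $(c^{jk})$ has operator norm at most $\tfrac12(1+|z|^2)^2$. I then apply the Bochner--Kohn--Morrey--Kodaira identity on the smoothly bounded pseudoconvex $\Omega_\nu$ with the $\bar\partial$-Neumann boundary condition; the Levi-form boundary term is nonnegative by pseudoconvexity of $\Omega_\nu$ and can be discarded. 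Dualizing via Hahn--Banach yields a solution $s_{\nu,\epsilon}$ of $\bar\partial s_{\nu,\epsilon}=w$ on $\Omega_\nu$ with
\[\int_{\Omega_\nu} |s_{\nu,\epsilon}|^2\, e^{-\tilde\varphi_\epsilon}\, d\lambda \leq \int_{\Omega_\nu} \langle (c^{jk}) w,w\rangle\, e^{-\tilde\varphi_\epsilon}\, d\lambda \leq \tfrac12\int_{\Omega_\nu} (1+|z|^2)^2 |w|^2 e^{-\tilde\varphi_\epsilon}\, d\lambda.\]
Since $e^{-\tilde\varphi_\epsilon} = (1+|z|^2)^{-2}e^{-\varphi_\epsilon}$, this rewrites as
\[\int_{\Omega_\nu} \frac{|s_{\nu,\epsilon}|^2}{(1+|z|^2)^2}\, e^{-\varphi_\epsilon}\, d\lambda \leq \tfrac12\int_{\Omega_\nu} |w|^2 e^{-\varphi_\epsilon}\, d\lambda,\]
which is strictly stronger than the required bound with constant $1$.

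Finally, letting $\epsilon \to 0$, the right-hand side decreases to $\tfrac12\int_{\Omega_\nu}|w|^2 e^{-\varphi}\, d\lambda$ by monotone convergence, and the uniformly bounded left-hand side yields a weakly convergent subsequence in $L^2(\Omega_\nu,(1+|z|^2)^{-2}e^{-\varphi})$ with limit $s_\nu$ satisfying $\bar\partial s_\nu = w$ on $\Omega_\nu$ with the same bound. A second diagonal extraction as $\nu\to\infty$ produces the required $s \in L^2_{(p,q),\,\mathrm{loc}}(\Omega)$ with $\bar\partial s = w$ and the stated estimate on $\Omega$. The main technical obstacle is the Bochner--Kohn--Morrey--Kodaira step: one must carefully verify the Hessian lower bound for $2\log(1+|z|^2)$ and the induced bound on its inverse acting on $(p,q+1)$-forms, so that the $(1+|z|^2)^2$ factor in the inverse precisely cancels the weight difference $e^{-\tilde\varphi_\epsilon}/e^{-\varphi_\epsilon} = (1+|z|^2)^{-2}$; everything else is the standard Hörmander $L^2$ machinery.
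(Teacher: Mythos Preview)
The paper does not supply its own proof of this lemma; it is quoted verbatim as Theorem~4.4.2 from H\"ormander's book and used as a black box. Your sketch is precisely the standard H\"ormander argument (exhaustion by strictly pseudoconvex subdomains, smoothing of $\varphi$, auxiliary strictly psh weight $2\log(1+|z|^2)$, Bochner--Kodaira--Morrey identity, and weak-limit extraction) that underlies the cited theorem, so there is nothing to compare---your approach and the one behind the citation coincide.
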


\begin{lem}[Lemma 6.9 in \cite{Demailly82}]\label{l:extension}
Let $\Omega$ be an open subset of
$\mathbb{C}^n$ and $Z$ be a complex analytic subset of $\Omega$.
Assume that $u$ is a $(p,q-1)$-form with $L^2_{\mathrm{loc}}$
coefficients and $g$ is a $(p,q)$-form with $L^1_{\mathrm{loc}}$
coefficients such that $\bar\partial u=g$ on $\Omega\setminus Z$ (in the sense of currents). Then $\bar\partial u=g$ on
$\Omega$.
\end{lem}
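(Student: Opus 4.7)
The statement is local, so I may shrink $\Omega$ to a coordinate ball. On such a ball, $Z$ is cut out by finitely many holomorphic functions and in particular is contained in the zero set $\{h=0\}$ of at least one non-identically-zero holomorphic function $h$ (take any single generator of the defining ideal of $Z$; if $Z$ were all of $\Omega$ the statement is vacuous). The hypothesis then yields the stronger assertion $\bar\partial u=g$ on the possibly larger open set $\Omega\setminus\{h=0\}$, and I only need to push this identity across the hypersurface $\{h=0\}$ to obtain
\begin{equation*}
(-1)^{p+q}\int_\Omega u\wedge\bar\partial\varphi \;=\; \int_\Omega g\wedge\varphi
\end{equation*}
for every smooth test form $\varphi$ of complementary bidegree with compact support in $\Omega$ (both integrals are finite by the $L^2_{\mathrm{loc}}$, resp.\ $L^1_{\mathrm{loc}}$, hypothesis).

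The plan is the standard cutoff-and-pass-to-limit argument. Fix $\sigma\in C^\infty(\mathbb{R})$ with $\sigma\equiv 0$ on $(-\infty,0]$ and $\sigma\equiv 1$ on $[1,\infty)$, and define logarithmic cutoffs
\begin{equation*}
\chi_\varepsilon(z):=\sigma\Bigl(\frac{\log|h(z)|^2-2\log\varepsilon}{|\log\varepsilon|}\Bigr),
\end{equation*}
extended by $0$ across $\{h=0\}$. These satisfy $0\le\chi_\varepsilon\le 1$, vanish in a neighborhood of $\{h=0\}$, and increase pointwise to $1$ on $\Omega\setminus\{h=0\}$. Since $\chi_\varepsilon\varphi$ has compact support in $\Omega\setminus\{h=0\}$, the hypothesis applies and gives
\begin{equation*}
(-1)^{p+q}\int_\Omega u\wedge\bar\partial(\chi_\varepsilon\varphi)\;=\;\int_\Omega g\wedge\chi_\varepsilon\varphi.
\end{equation*}
Expanding $\bar\partial(\chi_\varepsilon\varphi)=\chi_\varepsilon\bar\partial\varphi+\bar\partial\chi_\varepsilon\wedge\varphi$ decomposes this into two routine terms and one error term.

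The routine terms $\int u\wedge\chi_\varepsilon\bar\partial\varphi$ and $\int g\wedge\chi_\varepsilon\varphi$ converge by dominated convergence to $\int u\wedge\bar\partial\varphi$ and $\int g\wedge\varphi$ respectively, using $L^2_{\mathrm{loc}}\subset L^1_{\mathrm{loc}}$ and $L^1_{\mathrm{loc}}$ integrability together with $\chi_\varepsilon\to 1$ almost everywhere. The heart of the matter---and the only place the $L^2$ (rather than $L^1$) hypothesis on $u$ is genuinely needed---is to show that the error term $\int_\Omega u\wedge\bar\partial\chi_\varepsilon\wedge\varphi$ vanishes as $\varepsilon\to 0$. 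Setting $A_\varepsilon:=\mathrm{supp}(\bar\partial\chi_\varepsilon)\cap\mathrm{supp}(\varphi)\subset\{\varepsilon^2<|h|^2<\varepsilon\}$ and applying Cauchy--Schwarz,
\begin{equation*}
\Bigl|\int u\wedge\bar\partial\chi_\varepsilon\wedge\varphi\Bigr|\le C\|\varphi\|_\infty\Bigl(\int_{A_\varepsilon}|u|^2\,d\lambda\Bigr)^{\!1/2}\Bigl(\int_{A_\varepsilon}|\bar\partial\chi_\varepsilon|^2\,d\lambda\Bigr)^{\!1/2}.
\end{equation*}
The first factor tends to $0$ by absolute continuity of the Lebesgue integral, since $|A_\varepsilon|\to 0$ and $|u|^2\in L^1_{\mathrm{loc}}$. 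A direct computation yields $|\bar\partial\chi_\varepsilon|^2\le C|\partial h|^2/(|h|^2|\log\varepsilon|^2)$, and a standard Lelong--Poincar\'{e}-type estimate
\begin{equation*}
\int_{\{\varepsilon^2<|h|^2<\varepsilon\}\cap K}\frac{|\partial h|^2}{|h|^2}\,d\lambda\le C(K,h)\,|\log\varepsilon|
\end{equation*}
gives $\|\bar\partial\chi_\varepsilon\|_{L^2(K)}^2\le C/|\log\varepsilon|$. The main subtlety lies precisely in this uniformity: a naive polynomial cutoff such as $\chi(|h|^2/\varepsilon)$ would produce an $L^2$-norm of $\bar\partial\chi_\varepsilon$ that degrades as the order of vanishing of $h$ grows, whereas the logarithmic rescaling tames the dependence on multiplicities. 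Combining the two factors, the error vanishes in the limit, and the desired distributional identity extends from $\Omega\setminus Z$ to $\Omega$.
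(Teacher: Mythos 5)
The paper gives no proof of this lemma---it is quoted verbatim as Lemme 6.9 of Demailly's 1982 paper---and your argument is precisely the classical one: reduce locally to $Z\subset\{h=0\}$ with $h\not\equiv 0$ holomorphic, insert a logarithmic cutoff, and kill the error term by Cauchy--Schwarz using $u\in L^2_{\mathrm{loc}}$ together with the bound $\int_{K\cap\{\varepsilon^2<|h|^2<\varepsilon\}}|\partial h|^2|h|^{-2}\,d\lambda\leq C(K,h)\,|\log\varepsilon|$. The proof is correct; the only ingredient you assert rather than verify is that last estimate, which does hold (e.g.\ by the coarea formula together with the uniform bound on the $(2n-2)$-volumes of the fibers $\{h=w\}\cap K$, a Chern--Levine--Nirenberg type fact), so there is nothing of substance to add.
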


\begin{thm}[Strong openness property of multiplier ideal sheaves, \cite{Guan-Zhou2013c}]\label{l:strong-openness}
Let $\varphi$ be a negative plurisubharmonic function on the unit
polydisk $\Delta^n\subset\mathbb{C}^n$. Assume that $F$ is a
holomorphic function on $\Delta^n$ satisfying
\[\int_{\Delta^n}|F|^2e^{-\varphi}d\lambda<+\infty.\]
Then there exists $r\in(0,1)$ and $\beta\in
(0,+\infty)$ such that
\[\int_{\Delta^n_r}|F|^2e^{-(1+\beta)\varphi}d\lambda<+\infty,\]
where $\Delta^n_r:=\{(z_1,\cdots,z_n)\in
\mathbb{C}^n:\,|z_k|<r,1\leq k\leq n\}$.
\end{thm}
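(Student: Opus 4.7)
The plan is to deduce strong openness from the optimal (sharp-constant) Ohsawa-Takegoshi $L^2$ extension theorem, along the lines of Guan-Zhou. The sharpness of the extension constant is the decisive ingredient: it allows one to produce, for every small $\beta>0$, a holomorphic extension of $F$ with weight $(1+\beta)\varphi$ whose $L^2$ norm is bounded uniformly in $\beta$, so that one can take a limit $\beta\to 0^+$ and recover $F$ itself.

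First I would localize. A finite-cover argument on $\overline{\Delta^n_{r_0}}$ for some fixed $r_0\in(0,1)$ reduces the problem to showing that for each $x_0\in\overline{\Delta^n_{r_0}}$ there exist a neighborhood $U\ni x_0$ and $\beta>0$ with $\int_U |F|^2 e^{-(1+\beta)\varphi}\,d\lambda<+\infty$; the desired $\beta$ for a slightly smaller polydisk is then the minimum over a finite subcover. After translating $x_0$ to the origin and shrinking, one works on $\Delta^n_\rho$ with $\varphi\le 0$ and $\int_{\Delta^n_\rho}|F|^2 e^{-\varphi}\,d\lambda<+\infty$. Using Demailly's regularization (Theorem \ref{l:Demailly1994}), I approximate $\varphi$ from above by a sequence $\varphi_j\searrow\varphi$ with neat analytic singularities, placing us in a setting where a sharp extension theorem of the type of Theorem \ref{t:Zhou-Zhu1} applies.

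Next, for each small $\beta>0$ and each $j$, I apply the optimal extension theorem with weight $(1+\beta)\varphi_j$ to construct a holomorphic extension $F_{j,\beta}$ on $\Delta^n_{\rho/2}$ of (a representative of) $F$ on the subvariety cut out by the associated multiplier ideal. The sharp constant yields
\[\int_{\Delta^n_{\rho/2}}|F_{j,\beta}|^2 e^{-(1+\beta)\varphi_j}\,d\lambda \le C\int_{\Delta^n_\rho}|F|^2 e^{-\varphi_j}\,d\lambda,\]
with $C$ remaining bounded as $\beta\to 0^+$ (the analogue of $C_R+\alpha_1/R(\alpha_0)$ in Theorem \ref{t:Zhou-Zhu1}). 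Thus $\{F_{j,\beta}\}$ is bounded in $L^2_{\mathrm{loc}}$; by Montel I extract a diagonal holomorphic limit $F_0$ as $\beta\to 0^+$ and $j\to\infty$, and Fatou's lemma gives $\int_{\Delta^n_{\rho/2}}|F_0|^2 e^{-(1+\beta_0)\varphi}\,d\lambda<+\infty$ for some $\beta_0>0$.

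The main obstacle is to identify $F_0$ with $F$: one must show that the sharp extension converges to $F$ itself rather than to some other holomorphic function with the same data along the subvariety. Guan-Zhou resolve this by iterating the extension along a descending chain of analytic subvarieties and exploiting the minimum-$L^2$-norm property of the sharp solution; in the limit this property forces agreement with $F$ on every subvariety in the chain, hence on all of $\Delta^n_{\rho/2}$. Without the optimal constant, $C$ would diverge as $\beta\to 0^+$ and this limiting argument would collapse, which is precisely why the sharp version of Ohsawa-Takegoshi is indispensable here.
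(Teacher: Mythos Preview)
The paper does not prove this statement; it is quoted from \cite{Guan-Zhou2013c} and used as a black box in the proof of Theorem~\ref{t:Zhou-Zhu1} (Step~1, Part~II of Section~\ref{section-proof-of-theorem}, to obtain \eqref{ie:f-finite}). So there is no ``paper's own proof'' to compare against, and there is an immediate circularity in your outline: you invoke Theorem~\ref{t:Zhou-Zhu1}, whose proof in this very paper relies on Theorem~\ref{l:strong-openness}. At a minimum you must replace that appeal by the Stein case of optimal $L^2$ extension from \cite{Guan-Zhou2013b}, which is established without strong openness.

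Beyond the circularity, the mechanism you describe does not match what an $L^2$ extension theorem actually outputs. You assert an inequality of the form
\[
\int_{\Delta^n_{\rho/2}}|F_{j,\beta}|^2 e^{-(1+\beta)\varphi_j}\,d\lambda \;\le\; C\int_{\Delta^n_\rho}|F|^2 e^{-\varphi_j}\,d\lambda,
\]
but the extension theorem bounds the left side by an integral of $|f|^2$ over a \emph{subvariety} $Y^0$ against the residue-type measure $dV_{X,\omega}[\psi]$, not by a full-dimensional integral with a weaker weight. You have not said what plays the role of $\psi$, what the subvariety is, why the restriction of $F$ to it is $L^2$ for that measure, or how the right-hand side above would arise. ``The subvariety cut out by the associated multiplier ideal'' of $(1+\beta)\varphi_j$ depends on $\beta$ and $j$ and gives you no handle on the right-hand side.

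In the genuine Guan--Zhou argument the missing structure is supplied by \emph{induction on the dimension $n$}: one restricts to hyperplane slices $\{z_n=w\}$, applies the inductive hypothesis there, and uses the optimal one-codimension extension (with $\psi=\log|z_n|^2$) to come back. The sharp constant is used to show that $F$ itself is the extension of $F|_{\{z_n=0\}}$ of minimal weighted $L^2$ norm, and this minimality is what forces the limit to be $F$ rather than some other extension. Your last paragraph correctly identifies the identification step as the crux, but without the inductive slicing and the minimal-norm characterization you have no way to carry it out.
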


\begin{thm}[Hironaka's desingularization theorem, \cite{Hironaka1964}, \cite{Bierstone-Milman1991}]\label{l:Hironaka}
Let $X$ be a complex manifold, and $M$ be an analytic subvariety in $X$. Then there is a local finite sequence of blow-ups $\mu_j:X_{j+1}\longrightarrow X_j$ $(X_1:=X,\,j=1,2,\cdots)$ with smooth centers $S_j$ such that:
\begin{enumerate}
\item[$(1)$]
Each component of $S_j$ lies either in $(M_j)_{\mathrm{sing}}$ or in $M_j\cap E_j$, where $M_1:=M$, $M_{j+1}$ denotes the strict transform of $M_j$ by $\mu_j$, $(M_j)_{\mathrm{sing}}$ denotes the singular set of $M_j$, and $E_{j+1}$ denotes the exceptional divisor $\mu_j^{-1}(S_j\cup E_j)$.
\item[$(2)$]
Let $M'$ and $E'$ denote the final strict transform of $M$ and the exceptional divisor respectively. Then:
\begin{enumerate}
\item[$(i)$]
The underlying point-set $|M'|$ is smooth.
\item[$(ii)$]
$|M'|$ and $E'$ simultaneously have only normal crossings.
\end{enumerate}
\end{enumerate}
\end{thm}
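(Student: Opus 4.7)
The plan is to follow the constructive, invariant-driven approach of Bierstone--Milman, with a double induction on $\dim X$ and on a local desingularization invariant $\mathrm{inv}_x(M)$ attached to each point $x\in M_j$. Because the statement only demands a \emph{locally} finite sequence, all constructions may be carried out in a small neighborhood of an arbitrary $x_0\in X$ and then glued via the canonicity (independence of local analytic choices) of $\mathrm{inv}_x$.

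The heart of the proof is this invariant
\[
\mathrm{inv}_x(M)=\bigl(\nu_1(x),s_1(x);\nu_2(x),s_2(x);\dots\bigr),
\]
where $\nu_1(x)=\mathrm{ord}_x\mathcal{I}_{M_j}$ is the order of the defining ideal at $x$, $s_1(x)$ counts the components of the accumulated exceptional divisor $E_j$ through $x$ that are ``old'' relative to the current stage, and the later pairs $(\nu_i,s_i)$ are produced recursively by restricting to a smooth \emph{maximal contact} hypersurface and iterating the definition on a \emph{coefficient ideal} there in one lower dimension. I would then prove, in order: (a) $\mathrm{inv}_x$ is upper semicontinuous and locally takes only finitely many values; (b) its top stratum is a smooth submanifold meeting $E_j$ transversally, hence qualifies as a permissible center $S_j$ in the sense of clause (1); (c) after blowing up such an $S_j$ the invariant does not increase pointwise on $\mu_j^{-1}(S_j)$; and (d) after finitely many such blow-ups the maximum of $\mathrm{inv}$ strictly drops.

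The recursion generating $(\nu_i,s_i)$ for $i\geq 2$ is the technical core. Near $x$ with $\nu_1(x)$ maximal, Weierstrass preparation supplies a smooth hypersurface $H$ of maximal contact: it contains the Samuel stratum at $x$, and its strict transform continues to do so after any permissible blow-up. The coefficient ideal, schematically
\[
\mathcal{C}(\mathcal{I}_M)|_H=\sum_{0\leq|\alpha|<\nu_1}\bigl(\partial^{\alpha}\mathcal{I}_M\bigr)^{\nu_1!/(\nu_1-|\alpha|)}\big|_H,
\]
is engineered so that a resolution of $\mathcal{C}$ inside $H$ lifts to a sequence of blow-ups of $X$ that strictly lowers $\nu_1$; one then sets $(\nu_2,s_2,\ldots):=\mathrm{inv}^{H}(\mathcal{C})$ and invokes the inductive hypothesis on $\dim H=\dim X-1$. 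To secure the normal crossings clause (2)(ii), the monomial contribution of $E_j$ is factored out of $\mathcal{I}_M$ before forming $\mathcal{C}$, while the counters $s_i$ force each chosen center to be transverse to the already-accumulated components of $E_j$ throughout the process, so that $|M'|$ and $E'$ simultaneously have only normal crossings at the end.

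The main obstacle is proving the \emph{intrinsic} character of $\mathrm{inv}_x$: it must be shown independent of the choice of $H$, of local coordinates, and of the Weierstrass-type normalizations entering the coefficient ideal. This reduces to showing that any two maximal contact hypersurfaces at $x$ are exchanged by an analytic automorphism preserving $\mathcal{I}_M$ (an analytic avatar of the Tschirnhaus transformation), which in turn requires a delicate analysis of derivations tangent to the Samuel stratum and of the behaviour of $\mathcal{C}$ under blow-ups with permissible center. Once this invariance and the compatibility of $\mathcal{C}$ with the transformation $M_j\mapsto M_{j+1}$ are in hand, properties (a)--(d) patch globally, and Hironaka's theorem in the stated form follows by lexicographic induction on the well-ordered value set of $\mathrm{inv}$.
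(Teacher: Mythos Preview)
The paper does not prove this statement at all: Theorem~\ref{l:Hironaka} is quoted from the literature (Hironaka \cite{Hironaka1964}, Bierstone--Milman \cite{Bierstone-Milman1991}) and used as a black box in the proofs of Proposition~\ref{p:convergence-integral-psh} and Theorem~\ref{t:Zhou-Zhu1}. There is therefore no ``paper's own proof'' to compare your proposal against.

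That said, your outline is a faithful high-level summary of the Bierstone--Milman constructive approach: the lexicographic invariant $\mathrm{inv}_x$, maximal contact hypersurfaces, coefficient ideals, and the induction on $\dim X$ and on the invariant are exactly the ingredients of \cite{Bierstone-Milman1991}. As a proof \emph{sketch} it is reasonable, but be aware that what you have written is far from a proof: the hard parts you flag (canonicity of $\mathrm{inv}_x$, independence of the choice of maximal contact hypersurface, compatibility of the coefficient ideal with blow-ups, and the strict drop of the invariant) each require substantial work, and the full argument occupies dozens of pages in the references. For the purposes of this paper, citing \cite{Hironaka1964,Bierstone-Milman1991} is the appropriate route; attempting an independent proof here would be both out of scope and disproportionate to its role as an auxiliary tool.
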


\begin{remark}
We say that $|M'|$ and $E'$ simultaneously have only normal crossings if, locally, there is a coordinate system in which $E'$ is a union of coordinate hyperplanes, and $|M'|$ is a coordinate subspace.
\end{remark}


\section{Key propositions used to deal with the singular metric $h_L$}\label{section-singular-metric}

In order to deal with the singular metric $h_L$, we will prove two key propositions in this section, which are generalizations of the key propositions in \cite{Zhou-Zhu2015}.

\begin{prop}\label{p:special-L2-extension}
Let $R$ be a positive continuous function defined on $(-\infty,0]$ such that $\beta_R:=\sup\limits_{t\leq0}\big(e^tR(t)\big)<+\infty$ and $\widehat{\beta}_R:=\inf\limits_{t\leq0}R(t)>0$. Let
$\Omega\subset\mathbb{C}^n$ be a bounded pseudoconvex domain, $\phi$ be a
plurisubharmonic function on $\Omega$, and $\Upsilon$ be a
quasi-plurisubharmonic function defined on a neighborhood on $\overline{\Omega}$. Assume that $\Upsilon$ has neat analytic singularities and the singularities of $\Upsilon$ are log canonical along the zero variety $Y=V(\mathcal{I}(\Upsilon))$. Set
\[U=\{x\in\Omega:\,\Upsilon(x)<0\}.\]
Furthermore, assume that
\[\sqrt{-1}\partial\bar\partial\Upsilon\geq-\gamma\sqrt{-1}\partial\bar\partial|z|^2\]
on $\Omega$ for some nonnegative number $\gamma$, where $z:=(z_1,\cdots,z_n)$ is the coordinate vector in $\mathbb{C}^n$.
Then for every $\beta_1\in(0,1)$ and every
holomorphic $n$-form $f$ on $U$ satisfying
\[\int_U\frac{|f|^2e^{-\phi}}{e^{\Upsilon}R(\Upsilon )}d\lambda<+\infty,\]
there exists a holomorphic $n$-form $F$ on $\Omega$ satisfying $F=f$
on $Y$,
\begin{equation}\label{ie:special-extension1}
\int_U\frac{|F|^2e^{-\phi}d\lambda}{ e^{\Upsilon}R(\Upsilon )}\leq
e^{2\gamma\sup\limits_{\Omega}|z|^2}\bigg(2+\frac{72\beta_R}{\beta_1\widehat{\beta}_R}\bigg)
\int_U\frac{|f|^2e^{-\phi}d\lambda}{ e^{\Upsilon}R(\Upsilon )},
\end{equation}
and
\begin{equation}\label{ie:special-extension2}
\int_\Omega\frac{|F|^2e^{-\phi}d\lambda}{(1+e^\Upsilon)^{1+\beta_1}}
\leq e^{2\gamma\sup\limits_{\Omega}|z|^2}\bigg(\beta_R+\frac{36\beta_R}{\beta_12^{\beta_1}}\bigg)
\int_U\frac{|f|^2e^{-\phi}d\lambda}{ e^{\Upsilon}R(\Upsilon )}.
\end{equation}
\end{prop}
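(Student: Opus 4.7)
The first step is to eliminate $\gamma$. Since $\Omega$ is bounded, $2\gamma|z|^2$ is a bounded plurisubharmonic function, and after replacing $\phi$ by $\widetilde\phi:=\phi+2\gamma|z|^2$ the assumption $\sqrt{-1}\partial\bar\partial\Upsilon\geq-\gamma\sqrt{-1}\partial\bar\partial|z|^2$ translates into the plurisubharmonicity of both $\widetilde\phi$ and $\widetilde\phi+\Upsilon$. Since $e^{-2\gamma|z|^2}\geq e^{-2\gamma\sup_\Omega|z|^2}$, this replacement costs only the factor $e^{2\gamma\sup_\Omega|z|^2}$ that appears in both right-hand sides, so we may assume $\gamma=0$. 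Since $\phi$ is only plurisubharmonic, I smooth it by convolution (exhausting $\Omega$ by relatively compact pseudoconvex subdomains if necessary) to obtain a decreasing sequence $\phi_\nu\searrow\phi$ of smooth plurisubharmonic functions; proving the estimates with $\phi_\nu$ in place of $\phi$ and letting $\nu\to\infty$ recovers the statement for $\phi$ via monotone convergence. Finally, by Lemma \ref{l:Demailly1982-complete-metric} the open set $\Omega\setminus Y$ carries a complete K\"ahler metric, so Lemma \ref{l:Demailly-non complete metric} applies on $\Omega\setminus Y$ with the Euclidean metric playing the role of the (non-complete) auxiliary metric.

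\textbf{Twisted $\bar\partial$ problem.} For small $a>0$ pick a smooth nonincreasing cutoff $\chi_a:\mathbb{R}\to[0,1]$ with $\chi_a\equiv 1$ on $(-\infty,-2a]$ and $\chi_a\equiv 0$ on $[-a,+\infty)$. The smooth form $\chi_a(\Upsilon)f$ (extended by zero outside $U$) has
\[
v_a:=\bar\partial\bigl(\chi_a(\Upsilon)f\bigr)=\chi'_a(\Upsilon)\,\bar\partial\Upsilon\wedge f,
\]
supported in the annular region $\{-2a\leq\Upsilon\leq-a\}\subset U\setminus Y$, where $f$ is smooth. Following the Ohsawa--Takegoshi-type strategy of \cite{Guan-Zhou2013b} and \cite{Zhou-Zhu2015}, I choose a twist $\tau=\tau(-\Upsilon)$ and an error function $A=A(-\Upsilon)$, built as elementary combinations of $R$ and its antiderivative on $(-\infty,0]$, so that the Bochner operator $\mathrm{B}=[\tau\sqrt{-1}\partial\bar\partial\phi_\nu-\sqrt{-1}\partial\bar\partial\tau-\sqrt{-1}A^{-1}\partial\tau\wedge\bar\partial\tau,\Lambda]$ is semi-positive on $(n,1)$-forms and the pointwise quadratic form $\langle(\mathrm{B}+\delta\mathrm{I})^{-1}v_a,v_a\rangle$ is bounded by $|f|^2(\chi'_a(\Upsilon))^2/(e^\Upsilon R(\Upsilon))$ times a factor whose integral is absolutely bounded as $a\to 0$. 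Lemma \ref{l:Demailly-non complete metric} then yields an approximate solution $u_a$ and an error $w_a$ with $\mathrm{D}''u_a+\sqrt\delta w_a=v_a$ on $\Omega\setminus Y$ and the corresponding weighted $L^2$ bounds.

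\textbf{Assembly and passage to the limit.} Setting $F_{a,\nu}:=\chi_a(\Upsilon)f-u_a$, one has $\bar\partial F_{a,\nu}=\sqrt\delta w_a$ on $\Omega\setminus Y$. Sending $a\to 0$ with $\delta=\delta(a)\to 0$ tuned to kill the error, followed by $\nu\to\infty$, and extracting subsequences using weak $L^2$ compactness together with the Cauchy estimates for holomorphic functions, produces a holomorphic limit $F$ on $\Omega\setminus Y$; Lemma \ref{l:extension} then extends $F$ holomorphically across $Y$. On $Y^0$ one has $F=f$ because $\chi_a(\Upsilon)\equiv 1$ on a neighborhood of $Y^0$ (where $\Upsilon=-\infty$) and the log canonical hypothesis on $\Upsilon$, combined with the weighted bound on $u_a$, forces the limit of $u_a$ to vanish on $Y^0$ in the sense measured by $dV_{X,\omega}[\Upsilon]$. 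The two target estimates are obtained by integrating the pointwise bound on $|F|^2e^{-\phi}$ against the respective weights: on $U$ the weight $1/(e^\Upsilon R(\Upsilon))$ gives \eqref{ie:special-extension1}, while on $\Omega\setminus U$ the weight $(1+e^\Upsilon)^{-(1+\beta_1)}\leq e^{-(1+\beta_1)\Upsilon}$ supplies the decay needed at the outer boundary $\{\Upsilon\geq 0\}$, giving \eqref{ie:special-extension2}.

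\textbf{Main obstacle.} The delicate point is the bookkeeping that produces the explicit constants $2+\tfrac{72\beta_R}{\beta_1\widehat\beta_R}$ and $\beta_R+\tfrac{36\beta_R}{\beta_12^{\beta_1}}$. These arise from a two-stage Cauchy--Schwarz estimate which splits $v_a=\chi'_a(\Upsilon)\,\bar\partial\Upsilon\wedge f$ into pieces matched against the $\tau$- and $A$-contributions of $\mathrm{B}$, followed by a change of variable $s=\Upsilon$ on the support of $\chi'_a$ that converts the densities into integrals of $R$ and $1/R$ and so brings out the extremal quantities $\beta_R=\sup_{t\leq 0}e^tR(t)$ and $\widehat\beta_R=\inf_{t\leq 0}R(t)$. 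The numerics $72$ and $36$ are not canonical; they merely encode the accumulated constants from these inequalities, and only their finiteness matters for the downstream application to Theorem \ref{t:Zhou-Zhu1}. A secondary subtlety is that Lemma \ref{l:Demailly-non complete metric} requires a smooth Hermitian metric on $Q$, which is precisely why the regularization $\phi_\nu$ is necessary before applying the $\bar\partial$ estimate; only after the estimate is proved uniformly in $\nu$ may one invoke monotone convergence to recover the statement for the singular plurisubharmonic weight $\phi$.
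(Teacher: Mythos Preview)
Your proposal misidentifies the mechanism and introduces machinery that is both unnecessary and unspecified. The paper's proof is considerably simpler than what you sketch: it uses the \emph{non-twisted} case of Lemma~\ref{l:Demailly-non complete metric} (that is, $\tau=1$, $A=0$, $\delta=0$), a \emph{fixed} cutoff $\theta(e^\Upsilon)$ with $|\theta'|\le 3$, and no limit in the cutoff parameter. The entire curvature positivity comes from building the term $\beta_1\log(1+e^\Upsilon)$ into the Hermitian weight
\[
h=e^{-\phi_j-\Upsilon-\beta_1\log(1+e^\Upsilon)-2\gamma|z|^2},
\]
whose curvature on $\Omega_k\setminus\{\Upsilon=-\infty\}$ dominates $\dfrac{\beta_1 e^\Upsilon}{(1+e^\Upsilon)^2}\sqrt{-1}\partial\Upsilon\wedge\bar\partial\Upsilon$. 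This single term is exactly what is needed to control $\langle B^{-1}g,g\rangle_h$ for $g=\theta'(e^\Upsilon)e^\Upsilon\bar\partial\Upsilon\wedge f$, and yields directly the bound $\frac{36}{\beta_1 2^{\beta_1}}|f|^2$ (the $36$ is just $9\cdot 4$ from $|\theta'|^2\le 9$ and $(1+e^\Upsilon)^{2}\le 4$ on $U$). The function $R$ never enters the $\bar\partial$ apparatus; it appears only at the very end when one compares the weights $e^{-\Upsilon}(1+e^\Upsilon)^{-\beta_1}$ and $e^{-\Upsilon}R(\Upsilon)^{-1}$ via the extremal quantities $\beta_R$ and $\widehat\beta_R$.

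By contrast, you propose twist factors $\tau,A$ ``built as elementary combinations of $R$ and its antiderivative'' without saying what they are, a positive error $\delta$, and a limit $a\to 0$ in which the support of $v_a$ concentrates on $\{\Upsilon=0\}$ rather than near $Y$. None of this is needed, and the $a\to 0$ limit is genuinely problematic: since $|\chi_a'|\sim a^{-1}$ on a set where the variation of $\Upsilon$ is of order $a$, obtaining a uniform bound on $\int\langle B^{-1}v_a,v_a\rangle$ would require a specific cancellation from your unspecified $\tau,A$, and you give no indication of how to arrange it. Your account of the constants (``two-stage Cauchy--Schwarz'' and ``change of variable $s=\Upsilon$'') does not match how $36$ and $72$ actually arise. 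Finally, the vanishing $u=0$ on $Y$ in the paper follows from the presence of $e^{-\Upsilon}$ in the weight together with the log-canonical hypothesis (non-integrability of $e^{-\Upsilon}$ along $Y$) and smoothness of $u$; it is not a limiting statement about $dV_{X,\omega}[\Upsilon]$. The reductions you make (absorbing $2\gamma|z|^2$, smoothing $\phi$, exhausting $\Omega$) are correct and match the paper, but the core $\bar\partial$ step should be redone with the fixed cutoff and the weight $\beta_1\log(1+e^\Upsilon)$ in place of the twisted scheme.
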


\begin{proof}
This proposition is a modification of a theorem in \cite{Demailly-book}.

Since $\Omega$ is a pseudoconvex domain, there is a sequence of
pseudoconvex subdomains $\Omega_k\subset\subset\Omega$
$(k=1,2,\cdots)$ such that
$\overset{+\infty}{\underset{k=1}\cup}\Omega_k=\Omega$. Then for
fixed $k$, by convolution we can get a decreasing family of smooth
plurisubharmonic functions $\{\phi_j\}_{j=1}^{+\infty}$ defined on a
neighborhood of $\overline{\Omega_k}$ such that
$\lim\limits_{j\rightarrow+\infty}\phi_j=\phi$.

Let $\theta:\mathbb{R}\longrightarrow[0,1]$ be a smooth function such that $\theta=1$ on $(-\infty,\frac{1}{4})$, $\theta=0$ on $(\frac{3}{4},+\infty)$ and $|\theta'|\leq3$ on $\mathbb{R}$.

Fix $k$ and $j$. Set $\widehat{f}=\theta(e^\Upsilon)f$.
Then the construction of $\widehat{f}$ implies that $\widehat{f}$ is smooth on $\Omega$ and $\widehat{f}=f$ on $Y\cap\Omega$.

Set
$g=\bar\partial \widehat{f}$. Then $g=\theta'(e^\Upsilon)e^\Upsilon\bar\partial\Upsilon\wedge f$ on $\Omega$.

Let $\Sigma:=\{\Upsilon=-\infty\}$. Lemma \ref{l:Demailly1982-complete-metric} implies that
$\Omega_k\setminus\Sigma$ is a complete K\"{a}hler manifold. Let
$\Omega_k\setminus\Sigma$ be endowed with the Euclidean metric and let
$Q$ be the trivial line bundle on $\Omega_k\setminus\Sigma$ equipped
with the metric
\[h:=e^{-\phi_j-\Upsilon -\beta_1\log(1+e^\Upsilon)-2\gamma|z|^2}.\]
Then we want to solve a $\bar\partial$ equation on $\Omega_k\setminus\Sigma$ by
applying Lemma \ref{l:Demailly-non complete metric} to the case
$\tau=1$, $A=0$ and $\delta=0$ (in fact, the case $\tau=1$ and $A=0$
is the non twisted version of Lemma \ref{l:Demailly-non complete metric}). The key step in applying Lemma \ref{l:Demailly-non complete metric} is to estimate the term
\[\int_{\Omega_k\setminus\Sigma}\langle \mathrm{B}^{-1}g,g\rangle_h d\lambda,\]
where $\mathrm{B}:=[\sqrt{-1}\Theta_h ,\Lambda]$.

Set $\nu=\partial\Upsilon$. Then
$g=\theta'(e^\Upsilon)e^\Upsilon\bar\nu\wedge f$ on $\Omega$.

Since
\begin{eqnarray*}
& &\sqrt{-1}\Theta_h \big|_{\Omega_k\setminus\Sigma}\\
&=&\sqrt{-1}\partial\bar\partial\phi_j+ \sqrt{-1}\partial\bar\partial\Upsilon
+\beta_1\sqrt{-1}\partial\bar\partial\log(1+e^\Upsilon)+2\gamma\sqrt{-1}\partial\bar\partial|z|^2\\
&=&\sqrt{-1}\partial\bar\partial\phi_j+\bigg(1+\frac{\beta_1e^\Upsilon}{1+e^\Upsilon}\bigg)
\sqrt{-1}\partial\bar\partial\Upsilon+2\gamma\sqrt{-1}\partial\bar\partial|z|^2
+\frac{\beta_1e^\Upsilon\sqrt{-1}\partial\Upsilon\wedge\bar\partial\Upsilon}{(1+e^\Upsilon)^2}\\
&\geq&\frac{\beta_1e^\Upsilon\sqrt{-1}\nu\wedge\bar \nu}{(1+e^\Upsilon)^2},
\end{eqnarray*}
we get
\[\mathrm{B}\geq\frac{\beta_1e^\Upsilon}{(1+e^\Upsilon)^2}\mathrm{T}_{\bar\nu}\mathrm{T}^*_{\bar\nu}\]
on $\Omega_k\setminus\Sigma$, where $T_{\bar\nu}$ denotes the operator $\bar\nu\wedge\bullet$ and $\mathrm{T}^*_{\bar\nu}$ is its Hilbert adjoint operator. Then we get $\langle
\mathrm{B}^{-1}g,g\rangle_h\big|_{\Omega_k\setminus U}=0$ and
\begin{eqnarray*}
& &\langle\mathrm{B}^{-1}g,g\rangle_h\big|_{(U\cap\Omega_k)\setminus\Sigma}\\
&=& \langle\mathrm{B}^{-1} (\theta'(e^\Upsilon)e^\Upsilon\bar\nu\wedge f ),
\theta'(e^\Upsilon)e^\Upsilon\bar\nu\wedge f \rangle_h\\
&\leq&\frac{(1+e^\Upsilon)^2}{\beta_1e^\Upsilon} |\theta'(e^\Upsilon)e^\Upsilon f
 |^2e^{-\phi_j-\Upsilon -\beta_1\log(1+e^\Upsilon)-2\gamma|z|^2}\\
&=&\frac{(1+e^\Upsilon)^{2-\beta_1}}{\beta_1} |\theta'(e^\Upsilon)f
 |^2e^{-\phi_j-2\gamma|z|^2}\\
&\leq&\frac{36}{\beta_12^{\beta_1}} |f
 |^2e^{-\phi_j-2\gamma|z|^2}.
\end{eqnarray*}
Hence it follows from Lemma \ref{l:Demailly-non complete metric}
that there exists $u_{k,j}\in L^2(\Omega_k\setminus\Sigma,\,K_\Omega\otimes Q,\,h)$ such that $\bar\partial u_{k,j}=g=\bar\partial
\widehat{f}$ on $\Omega_k\setminus\Sigma$ and
\[\int_{\Omega_k\setminus\Sigma}|u_{k,j}|^2_h d\lambda
\leq\int_{\Omega_k\setminus\Sigma}\langle \mathrm{B}^{-1}g,g\rangle_h
d\lambda.\]
Thus
\begin{eqnarray}
& &\int_{\Omega_k\setminus\Sigma}\frac{|u_{k,j}|^2e^{-\phi_j-2\gamma|z|^2}}{ e^{\Upsilon}(1+e^{\Upsilon})^{\beta_1}}d\lambda\label{ie:prop1-u-and-f}\\
&\leq&\frac{36}{\beta_12^{\beta_1}}\int_{U\cap\Omega_k}|f|^2e^{-\phi_j-2\gamma|z|^2}d\lambda\nonumber\\
&\leq&\frac{36\beta_R}{\beta_12^{\beta_1}}
\int_U\frac{|f|^2e^{-\phi-2\gamma|z|^2}}{ e^{\Upsilon}R(\Upsilon )}d\lambda.\nonumber
\end{eqnarray}
Hence we have $u_{k,j}\in L^2(\Omega_k\setminus\Sigma,\,K_\Omega)$. Since
$g\in C^\infty(\Omega_k,\,\wedge^{n,1}T^*_\Omega)$, Lemma
\ref{l:extension} implies that $\bar\partial u_{k,j}=g$ holds on
$\Omega_k$.

Let $F_{k,j}:=\widehat{f}-u_{k,j}$. Then $\bar\partial F_{k,j}=0$ on
$\Omega_k$. Thus $F_{k,j}$ is holomorphic on $\Omega_k$. Hence
$u_{k,j}$ is smooth on $\Omega_k$. Then the non-integrability of
$e^{-\Upsilon}$ along $Y$ implies that $u_{k,j}=0$ on $Y\cap \Omega_k$.
Therefore, $F_{k,j}=f$ on $Y\cap\Omega_k$.

It follows from $(\ref{ie:prop1-u-and-f})$ that
\begin{eqnarray*}
&
&\int_{U\cap\Omega_k}\frac{|u_{k,j}|^2e^{-\phi_j-2\gamma|z|^2}}{ e^{\Upsilon}R(\Upsilon )}d\lambda\\
&\leq&\frac{2^{\beta_1}}{\widehat{\beta}_R}\int_{U\cap\Omega_k}
\frac{|u_{k,j}|^2e^{-\phi_j-2\gamma|z|^2}}{ e^{\Upsilon}(1+e^{\Upsilon})^{\beta_1}}d\lambda\\
&\leq&\frac{36\beta_R}{\beta_1\widehat{\beta}_R}
\int_U\frac{|f|^2e^{-\phi-2\gamma|z|^2}}{ e^{\Upsilon}R(\Upsilon )}d\lambda.
\end{eqnarray*}

Since
\[|F_{k,j}|^2\big|_{U\cap\Omega_k}\leq2|\widehat{f}|^2+2|u_{k,j}|^2
\leq2|f|^2+2|u_{k,j}|^2,\] we get
\begin{eqnarray}
&
&\int_{U\cap\Omega_k}\frac{|F_{k,j}|^2e^{-\phi_j-2\gamma|z|^2}}{ e^{\Upsilon}R(\Upsilon )}
d\lambda\label{ie:special-extension3}\\
&\leq&2\int_{U\cap\Omega_k}\frac{(|f|^2+|u_{k,j}|^2)
e^{-\phi_j-2\gamma|z|^2}}{ e^{\Upsilon}R(\Upsilon )}d\lambda\nonumber\\
&\leq&\bigg(2+\frac{72\beta_R}{\beta_1\widehat{\beta}_R}\bigg)
\int_U\frac{|f|^2e^{-\phi-2\gamma|z|^2}}{ e^{\Upsilon}R(\Upsilon )}d\lambda. \nonumber
\end{eqnarray}

Since
\begin{equation}\label{ie:inner product}
\langle\kappa_1+\kappa_2,\,\kappa_1+\kappa_2\rangle
\leq\langle\kappa_1,\kappa_1\rangle+\langle\kappa_2,
\kappa_2\rangle+c\langle\kappa_1,\kappa_1\rangle
+\frac{1}{c}\langle\kappa_2,\kappa_2\rangle
\end{equation}
for any inner product space $\big(\mathrm{H},
\,\langle\bullet,\bullet\rangle\big)$, where
$\kappa_1,\kappa_2\in\mathrm{H}$,
we get
\[|F_{k,j}|^2\big|_{U\cap\Omega_k}\leq(|f|+|u_{k,j}|)^2\leq(1+ e^{\Upsilon})|f|^2
+(1+\frac{1}{ e^{\Upsilon}})|u_{k,j}|^2.\]
Then
\[\frac{|F_{k,j}|^2}{(1+e^\Upsilon)^{1+\beta_1}}\bigg|_{U\cap\Omega_k}
\leq|f|^2+\frac{|u_{k,j}|^2}{ e^{\Upsilon}(1+e^\Upsilon)^{\beta_1}}.\]
Since $|F_{k,j}|^2\big|_{\Omega_k\setminus U}=|u_{k,j}|^2$,
we get
\[\frac{|F_{k,j}|^2}{(1+e^\Upsilon)^{1+\beta_1}}\bigg|_{\Omega_k\setminus U}
\leq\frac{|u_{k,j}|^2}{ e^{\Upsilon}(1+e^\Upsilon)^{\beta_1}}.\]
Hence it follows from the two inequalities above and $(\ref{ie:prop1-u-and-f})$ that
\begin{eqnarray}
&
&\int_{\Omega_k}\frac{|F_{k,j}|^2e^{-\phi_j-2\gamma|z|^2}}{(1+e^\Upsilon)^{1+\beta_1}}d\lambda\label{ie:special-extension4}\\
&\leq&\int_U|f|^2e^{-\phi-2\gamma|z|^2}d\lambda+\int_{\Omega_k}
\frac{|u_{k,j}|^2e^{-\phi_j-2\gamma|z|^2}}{ e^{\Upsilon}(1+e^\Upsilon)^{\beta_1}}d\lambda\nonumber\\
&\leq&\bigg(\beta_R+\frac{36\beta_R}{\beta_12^{\beta_1}}\bigg)
\int_U\frac{|f|^2e^{-\phi-2\gamma|z|^2}}{ e^{\Upsilon}R(\Upsilon )}d\lambda.\nonumber
\end{eqnarray}

Since $e^{-2\gamma\sup\limits_{\Omega}|z|^2}\leq e^{-2\gamma|z|^2}\leq1$ on $\Omega$, the desired holomorphic $n$-form $F$ on $\Omega$ and the $L^2$ estimates $(\ref{ie:special-extension1})$ and
$(\ref{ie:special-extension2})$ can be obtained from
$(\ref{ie:special-extension3})$ and $(\ref{ie:special-extension4})$
by applying Montel's theorem and extracting weak limits of
$\{F_{k,j}\}_{k,j}$, first as $j\rightarrow+\infty$ and then as
$k\rightarrow+\infty$.

\end{proof}

\begin{prop}\label{p:convergence-integral-psh}
Let $X$, $\psi$, $Y$ and $Y^0$ be as in Theorem \ref{t:Zhou-Zhu1}. Let $U\subset\subset V\subset\subset\Omega$ be three local coordinate balls in $X$, $\phi$ be a plurisubharmonic function on
$\Omega$ such that $\sup\limits_{\Omega}\phi<+\infty$, and $v$ be a nonnegative
continuous function on $\Omega$ with $\mathrm{supp}\,v\subset U$.
Let $C$, $\beta$, $c_1$ and $c_2$ be positive numbers, and let $\beta_1$ be a small enough positive number. Assume that $f$ is a holomorphic
function on $\Omega\cap Y$ satisfying
\begin{equation}\label{ie:prop2-f-L2}
\int_{\Omega\cap Y^0 }|f|^2e^{-\phi} d\lambda[\psi]<+\infty,
\end{equation}
and that $f_t\in\mathcal{O}(\Omega)$ $\big(t\in(-\infty,0)\big)$ are a family of holomorphic functions such that for all $t\in (-\infty,0)$,
$f_t=f$ on $\Omega\cap Y$,
\begin{equation}\label{ie:special-extension-1}
\sup\limits_{V}|f_t|^2 \leq
Ce^{-\beta_1t}
\end{equation}
and
\begin{equation}\label{ie:special-extension-2}
\frac{1}{e^t}\int_{\Omega\cap\{\psi<t+c_2\}}|f_t|^2e^{-(1+\beta)\phi}d\lambda\leq
C.
\end{equation}
Then
\begin{equation}\label{ie:prop2-final-estimate}
\varlimsup\limits_{ t\rightarrow-\infty }\int_{U\cap\{t-c_1<\psi<t+c_2\}}\frac{e^tv|f_t|^2e^{-\phi}} {(e^\psi+e^t)^2}d\lambda\leq\int_{U\cap Y^0 }v|f|^2e^{-\phi} d\lambda[\psi].
\end{equation}
\end{prop}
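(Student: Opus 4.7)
My plan is to reduce via Hironaka to a local computation on an SNC resolution of $\psi$, split $\pi^*f_t$ into its trace on $\{w_1=0\}$ plus a remainder divisible by $w_1$, identify the limit of the first piece with $\int_{U\cap Y^0} v|f|^2 e^{-\phi}\,d\lambda[\psi]$ via Definition \ref{d:measure}, and kill the remainder using a Cauchy estimate from $(\ref{ie:special-extension-1})$.

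Step 1 (Reduction to SNC charts). Since $\operatorname{supp} v\subset U$, it suffices to work in finitely many charts of a log resolution $\pi:\widetilde\Omega\to\Omega$ provided by Theorem \ref{l:Hironaka}, on which $\pi^*\psi = \sum_j c_j\log|w_j|^2 + u$ with $u$ smooth and each $c_j\in[0,1]$ (by the log canonical hypothesis). Generically $\pi^{-1}(Y^0)$ meets such a chart in a single hyperplane $\{w_1=0\}$ with $c_1=1$.

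Step 2 (Main chart). On a main chart with $\pi^*\psi=\log|w_1|^2+u$, the equality $f_t|_Y=f$ forces $\pi^*f_t(w_1,w')-\pi^*f(0,w')=w_1\,g_t(w)$ with $g_t$ holomorphic. Cauchy's formula on a sub-polydisk inside $\pi^{-1}(V)$ combined with $(\ref{ie:special-extension-1})$ yields $\sup|g_t|^2\le C_1 e^{-\beta_1 t}$. Using $|\pi^*f_t|^2 \le (1+\eta)|\pi^*f|^2 + (1+\eta^{-1})|w_1|^2|g_t|^2$ and the two kernel identities
\[
\int_{\mathbb{C}}\frac{e^t\,dw_1 d\bar w_1}{(|w_1|^2 e^u + e^t)^2}=\pi e^{-u},\qquad \frac{e^t|w_1|^2}{(|w_1|^2 e^u + e^t)^2}\le\tfrac14 e^{-u},
\]
the ``$|\pi^*f|^2$'' piece limits (by dominated convergence) to $\int_{U\cap Y^0}v|f|^2 e^{-\phi}\,d\lambda[\psi]$ after matching with the local Hironaka form of Definition \ref{d:measure} (cf.\ Remark \ref{r:definition-of-measure}), while the remainder is bounded by $\tfrac{C_1}{4}\,e^{-\beta_1 t}\cdot\mathrm{Vol}(\text{strip})=O(e^{(1-\beta_1)t})\to 0$ since the strip $\{t-c_1<\psi<t+c_2\}$ has Lebesgue measure $O(e^t)$ on a main chart. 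Sending $\eta\to 0^+$ gives the desired inequality on such charts.

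Step 3 (Bad charts; the main obstacle). The principal difficulty is the case analysis for charts where $\pi^{-1}(Y^0)$ is met in two or more hyperplanes simultaneously, or where some component has $c_j\in(0,1)$: there the strip has Lebesgue volume of strictly smaller order than $e^t$, so the main-chart matching fails. A Hironaka polar-coordinate computation shows the $w_j$-integral of the kernel is $O(e^{t(1/c_j-1)})$, so the sup-bound argument of Step 2 still yields an $o(1)$ contribution when $\beta_1$ is small enough (using $1/c_j-1>0$ when $c_j<1$ and the analogous improvement in codimension $\ge 2$). Where the sup bound is too coarse, one combines the universal kernel bound $\tfrac{e^t}{(e^\psi+e^t)^2}\le e^{-t}$ with hypothesis $(\ref{ie:special-extension-2})$ via H\"older's inequality, exploiting that $\phi$ is bounded above and the exponent $1+\beta>1$. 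The interplay of $(\ref{ie:special-extension-1})$ (controlling $f_t$ pointwise near $\pi^{-1}(Y^0)$) and $(\ref{ie:special-extension-2})$ (controlling $f_t$ in mean near $Y_{\mathrm{sing}}$) is essential: neither hypothesis by itself handles both the principal stratum and the higher-codimension strata, and the bookkeeping of resolution jacobians against Definition \ref{d:measure} is where care must be taken.
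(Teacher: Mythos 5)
Your overall strategy (Hironaka resolution to SNC charts, Taylor-split $\pi^*f_t$ at the strict transform of $Y$, the one-dimensional kernel identity in the transverse variable, and reconciliation with Definition \ref{d:measure}) is the same skeleton as the paper's proof, and the Cauchy estimate for the remainder from $(\ref{ie:special-extension-1})$ is correctly identified. However, there are two genuine gaps.

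First, you cannot pass to the limit in $e^{-\phi\circ\pi}$ by dominated convergence. The weight $\phi$ is only assumed plurisubharmonic, hence merely upper semicontinuous and possibly equal to $-\infty$ along $\pi^{-1}(Y)$; $e^{-\phi}$ is lower semicontinuous and unbounded. The paper deals with this in two layers: first it uses $(\ref{ie:special-extension-2})$ \emph{solely} to truncate the weight, showing that the contribution from $\{\phi\le -j_\epsilon\}$ is uniformly small in $t$ (their inequality $(\ref{ie:singular-part-1})$), so one may replace $\phi$ by the bounded $\phi_\epsilon=\max\{\phi,-j_\epsilon\}$; second, for this bounded but still only upper-semicontinuous weight it proves a one-sided $\varlimsup$ estimate (Lemma \ref{l:one dim lemma}) by a subharmonic mean-value argument showing the sublevel set $\{\varphi(e^{t/2\alpha}z)<(1+\delta)\varphi(0)\}$ has vanishing measure, then integrates via Fatou's lemma with a carefully built $t$-independent $L^1$ majorant. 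Your proposal addresses none of this; ``dominated convergence'' is not available here, and this is precisely why Lemma \ref{l:one dim lemma} exists.

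Second, you have misidentified the mechanism that kills the contribution from charts other than the principal $D_{p_0}$ chart. The key fact, which the paper flags explicitly in the remark following the statement, is that the finiteness hypothesis $(\ref{ie:prop2-f-L2})$ together with the direct-image description of $\mathcal{I}(\psi)$ forces $f\circ\mu$ to vanish identically on every divisor $D_p$ with $ca_p-b_p=1$ other than $D_{p_0}$ (and on every such divisor in Case $(B)$). That vanishing, fed through the Cauchy estimate $(\ref{ie:special-extension-1})$ and the mean value theorem, is what produces the extra $\prod_{p\in\kappa}|w_p|^2$ factors in $(\ref{ie:case1-ft})$--$(\ref{ie:case2-ft-2})$ and makes the remainder integrable with a $t$-independent majorant. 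Your substitute — the universal bound $e^t/(e^\psi+e^t)^2\le e^{-t}$ combined with $(\ref{ie:special-extension-2})$ and H\"older — does not close: $(\ref{ie:special-extension-2})$ gives the strip integral of $|f_t|^2 e^{-(1+\beta)\phi}$ an $O(e^t)$ bound, which against $e^{-t}$ yields only $O(1)$, not $o(1)$. You need the vanishing of $f\circ\mu$ on the extra divisors, not H\"older against $(\ref{ie:special-extension-2})$, to handle Case $(B)$ and the non-principal components in Case $(A)$.
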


\begin{remark}
One of the key points in the proof of Proposition \ref{p:convergence-integral-psh} is to verify that the upper limit in $(\ref{ie:prop2-final-estimate})$ produces the zero measure on the singular set of $Y$, i.e., we have $(\ref{e:f-vanish})$. Then the key uniform estimates in Step 2 of the proof are obtained.
\end{remark}

In order to prove Proposition \ref{p:convergence-integral-psh}, we prove the following lemma at first.

\begin{lem}\label{l:one dim lemma}
Let $r_1$, $r_2$ and $\gamma$ be positive numbers such that $r_1<r_2<\gamma$. Let $\varphi$ be a bounded negative subharmonic function on $\Delta_\gamma$, where $\Delta_\gamma:=\{w\in\mathbb{C}:\,|w|<\gamma\}$. Assume that $\{v_t\}_{t\in(-\infty,0)}$ are nonnegative continuous functions defined on $\Delta_\gamma$ such that
\begin{equation}\label{e:uniform continuous}
\lim\limits_{t\rightarrow-\infty}\sup\limits_{\{w\in\mathbb{C}:\,e^t(r_1)^{2\alpha}<|w|^{2\alpha}<e^t(r_2)^{2\alpha}\}}|v_t(w)-v_0|=0,
\end{equation}
where $\alpha\in[1,+\infty)$ and $v_0\in[0,+\infty)$. Let
\[P_t:=\int_{\{w\in\mathbb{C}:\,e^t(r_1)^{2\alpha}<|w|^{2\alpha}<e^t(r_2)^{2\alpha}\}}
\frac{e^t|w|^{2\alpha-2}v_t(w)e^{-\varphi(w)}}{(|w|^{2\alpha}+e^t)^2}d\lambda(w).\]
Then
\begin{equation}\label{ie:one dim limit}
\varlimsup\limits_{t\rightarrow-\infty}P_t\leq\frac{\pi v_0e^{-\varphi(0)}}{\alpha}.
\end{equation}
\end{lem}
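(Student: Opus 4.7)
The plan is to reduce $P_t$ to an integral over a fixed, $t$-independent annulus via a scaling change of variables, and then to invoke a convergence-in-measure argument for bounded subharmonic functions to pass to the limit.

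First, in polar coordinates $w = \rho e^{i\theta}$ together with the substitution $u = \rho^{2\alpha}/e^t$, the identities $\rho^{2\alpha-1}\,d\rho = \frac{e^t}{2\alpha}\,du$ and $\rho^{2\alpha}+e^t = e^t(u+1)$ collapse the integrand to
\begin{equation*}
P_t = \frac{1}{2\alpha}\int_{r_1^{2\alpha}}^{r_2^{2\alpha}} \frac{1}{(u+1)^2}\left(\int_0^{2\pi} v_t(\rho_t(u) e^{i\theta})\, e^{-\varphi(\rho_t(u) e^{i\theta})}\, d\theta\right) du,
\end{equation*}
where $\rho_t(u) := e^{t/(2\alpha)} u^{1/(2\alpha)}$. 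Since $|\rho_t(u) e^{i\theta}|^{2\alpha} = e^t u \in (e^t r_1^{2\alpha}, e^t r_2^{2\alpha})$ for $u\in[r_1^{2\alpha},r_2^{2\alpha}]$, assumption \eqref{e:uniform continuous} yields $v_t(\rho_t(u)e^{i\theta}) \to v_0$ uniformly on $[r_1^{2\alpha},r_2^{2\alpha}]\times[0,2\pi]$, so for any $\epsilon>0$ and all sufficiently negative $t$,
\begin{equation*}
P_t \leq \frac{v_0+\epsilon}{2\alpha}\int_{r_1^{2\alpha}}^{r_2^{2\alpha}} \frac{G(\rho_t(u))}{(u+1)^2}\, du, \qquad G(r) := \int_0^{2\pi} e^{-\varphi(re^{i\theta})}\, d\theta.
\end{equation*}

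The heart of the argument is the claim $G(r) \to 2\pi e^{-\varphi(0)}$ as $r\to 0^+$. Upper semicontinuity of $\varphi$ at $0$ supplies a uniform bound $f_r(\theta) := \varphi(re^{i\theta}) \leq \varphi(0) + \eta(r)$ with $\eta(r)\downarrow 0$; the sub-mean-value inequality gives $\bar f_r := \frac{1}{2\pi}\int_0^{2\pi} f_r\,d\theta \geq \varphi(0)$; and combining this with the classical monotonicity in $r$ of spherical means of a subharmonic function together with the USC upper bound forces $\bar f_r \to \varphi(0)$. Since $\int_0^{2\pi}(\bar f_r - f_r)\,d\theta = 0$,
\[
\int_0^{2\pi}(\bar f_r - f_r)^+ d\theta = \int_0^{2\pi}(f_r - \bar f_r)^+ d\theta \leq \int_0^{2\pi}(f_r - \varphi(0))^+ d\theta \leq 2\pi\eta(r),
\]
where the first inequality uses $\bar f_r\geq\varphi(0)$ and the second uses the USC upper bound. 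Markov's inequality then gives
\[
\bigl|\{\theta:\, f_r(\theta) < \varphi(0)-\epsilon\}\bigr| \leq \bigl|\{\theta:\, \bar f_r - f_r(\theta) > \epsilon\}\bigr| \leq \frac{2\pi\eta(r)}{\epsilon} \longrightarrow 0,
\]
so $f_r \to \varphi(0)$ in measure on $[0,2\pi]$. As $\varphi$ is bounded, $e^{-f_r}$ is uniformly bounded, and bounded convergence on $[0,2\pi]$ yields $G(r)\to 2\pi e^{-\varphi(0)}$.

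Finally, $G$ is uniformly bounded above by $2\pi e^{-\inf_{\Delta_\gamma}\varphi}$, so dominated convergence on $[r_1^{2\alpha},r_2^{2\alpha}]$ with the integrable envelope $2\pi e^{-\inf\varphi}/(u+1)^2$ gives
\begin{equation*}
\lim_{t\to-\infty}\int_{r_1^{2\alpha}}^{r_2^{2\alpha}} \frac{G(\rho_t(u))}{(u+1)^2}\, du = 2\pi e^{-\varphi(0)}\left(\frac{1}{r_1^{2\alpha}+1} - \frac{1}{r_2^{2\alpha}+1}\right) \leq 2\pi e^{-\varphi(0)}.
\end{equation*}
Combining and letting $\epsilon\to 0$ produces $\varlimsup_{t\to-\infty} P_t \leq \frac{\pi v_0 e^{-\varphi(0)}}{\alpha}$, as claimed. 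The principal technical obstacle is the circle-integral limit $G(r)\to 2\pi e^{-\varphi(0)}$: upper semicontinuity of $\varphi$ alone controls $\varphi$ only from above (which a priori only bounds $e^{-\varphi}$ from below), and the sub-mean-value inequality alone only controls the average; it is the Markov-inequality bridge between these two that produces convergence in measure of $f_r$ to the constant $\varphi(0)$, and thus the correct limit for $G$ after exponentiation.
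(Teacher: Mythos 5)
Your proof is correct and takes a variant of the paper's approach. Both arguments rest on the same two ingredients --- upper semicontinuity of $\varphi$ at the origin (giving a uniform upper bound $\varphi \leq \varphi(0)+\eta$ nearby) and the sub-mean-value inequality for subharmonic functions --- combined through a Markov/Chebyshev-type bound to control the measure of the set where $\varphi$ falls substantially below $\varphi(0)$. The difference lies in the choice of averaging domain. The paper rescales to a fixed two-dimensional annulus in $z$, uses the mean-value inequality over the full disk $\Delta_\gamma$ to bound the area of the bad set $S_{\delta,t}=\{z:\varphi(e^{t/(2\alpha)}z)<(1+\delta)\varphi(0)\}$, splits the integral accordingly, and obtains an upper bound with the factor $e^{-(1+\delta)\varphi(0)}$ which is then sent to $e^{-\varphi(0)}$ by letting $\delta\to 0$. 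You instead Fubini out the angular variable first, reduce to the one-dimensional circle average $G(r)=\int_0^{2\pi}e^{-\varphi(re^{i\theta})}\,d\theta$, and prove the exact limit $G(r)\to 2\pi e^{-\varphi(0)}$ by establishing convergence in measure of $\theta\mapsto\varphi(re^{i\theta})$ to the constant $\varphi(0)$ on each circle, applying bounded convergence twice. Isolating the self-contained statement about circle means of bounded subharmonic functions makes the mechanism arguably a bit more transparent and avoids the extra parameter $\delta$; the paper's version is marginally more direct in that it never introduces the auxiliary function $G$. One very minor observation: your appeal to monotonicity of spherical means is superfluous, since the squeeze $\varphi(0)\leq \bar f_r\leq \varphi(0)+\eta(r)$ from the sub-mean-value inequality and upper semicontinuity already forces $\bar f_r\to\varphi(0)$.
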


\begin{proof}
Put
\[S_{\delta,t}=\{z\in\Delta_\gamma:\,\varphi(e^{\frac{t}{2\alpha}}z)<(1+\delta)\varphi(0)\},\quad \delta\in(0,+\infty),\quad t\in(-\infty,0).\]
Denote by $\lambda(S_{\delta,t})$ the $2$-dimensional Lebesgue measure of $S_{\delta,t}$.

Since $\varphi(w)$ is a negative upper semicontinuous function on $\Delta_\gamma$ and $\varphi(0)>-\infty$, we have that for every $\varepsilon\in(0,1)$, there exists $t_\varepsilon\in(-\infty,0)$ such that
\[\varphi(e^{\frac{t}{2\alpha}}z)\leq(1-\varepsilon)\varphi(0)\]
for all $z\in\Delta_\gamma$ when $t\in(-\infty,t_\varepsilon)$.

Since $\varphi(e^{\frac{t}{2\alpha}}z)$ is subharmonic on $\Delta_\gamma$ with respect to $z$ for any $t\in(-\infty,t_\varepsilon)$, it follows from the mean value inequality that, for all $t\in(-\infty,t_\varepsilon)$,
\begin{eqnarray*}
\varphi(0)&\leq&\frac{1}{\pi\gamma^2}\int_{z\in\Delta_\gamma}\varphi(e^{\frac{t}{2\alpha}}z)d\lambda(z)\\
&=&\frac{1}{\pi\gamma^2}\int_{z\in\Delta_\gamma\setminus S_{\delta,t}}\varphi(e^{\frac{t}{2\alpha}}z)d\lambda(z)+\frac{1}{\pi\gamma^2}\int_{z\in S_{\delta,t}}\varphi(e^{\frac{t}{2\alpha}}z)d\lambda(z)\\
&\leq&\frac{(1-\varepsilon)\varphi(0)\big(\pi\gamma^2-\lambda(S_{\delta,t})\big)}{\pi\gamma^2}
+\frac{(1+\delta)\varphi(0)\lambda(S_{\delta,t})}{\pi\gamma^2}\\
&=&\varphi(0)\bigg(1-\varepsilon+\frac{(\delta+\varepsilon)\lambda(S_{\delta,t})}{\pi\gamma^2}\bigg).
\end{eqnarray*}
Then $\varphi(0)<0$ implies that
\[\lambda(S_{\delta,t})\leq\frac{\pi\gamma^2\varepsilon}{\delta+\varepsilon}\leq\frac{\pi\gamma^2}{\delta}\varepsilon\]
when $t\in(-\infty,t_\varepsilon)$. Hence
\begin{equation}\label{e:S limit 0}
\lim\limits_{t\rightarrow-\infty}\lambda(S_{\delta,t})=0,\quad \forall\,\delta\in(0,+\infty).
\end{equation}

Since $\varphi$ is bounded, we have
\[-\varphi\leq C_1\]
for some positive number $C_1$.

$(\ref{e:uniform continuous})$ implies that
\[\sup\limits_{\{w\in\mathbb{C}:\,e^t(r_1)^{2\alpha}<|w|^{2\alpha}<e^t(r_2)^{2\alpha}\}}v_t(w)\leq C_2\]
for some positive number $C_2$ independent of $t$ when $t$ is small enough.

Then by the change of variables $w=e^{\frac{t}{2\alpha}}z$, we have
\begin{eqnarray*}
P_t&=&\int_{\{z\in\mathbb{C}:\,r_1<|z|<r_2\}}
\frac{|z|^{2\alpha-2}v_t(e^{\frac{t}{2\alpha}}z)e^{-\varphi(e^{\frac{t}{2\alpha}}z)}}{(|z|^{2\alpha}+1)^2}d\lambda(z)\\
&=&\int_{\{r_1<|z|<r_2\}\cap S_{\delta,t}}
\frac{|z|^{2\alpha-2}v_t(e^{\frac{t}{2\alpha}}z)e^{-\varphi(e^{\frac{t}{2\alpha}}z)}}{(|z|^{2\alpha}+1)^2}d\lambda(z)\\
& &+\int_{\{r_1<|z|<r_2\}\setminus S_{\delta,t}}
\frac{|z|^{2\alpha-2}v_t(e^{\frac{t}{2\alpha}}z)e^{-\varphi(e^{\frac{t}{2\alpha}}z)}}{(|z|^{2\alpha}+1)^2}d\lambda(z)\\
&\leq&\frac{(r_2)^{2\alpha-2}C_2e^{C_1}}{\big((r_1)^{2\alpha}+1\big)^2}\cdot\lambda(S_{\delta,t})\\
& &+\bigg(\sup\limits_{r_1<|z|<r_2}v_t(e^{\frac{t}{2\alpha}}z)\bigg)e^{-(1+\delta)\varphi(0)}\int_{\{r_1<|z|<r_2\}}
\frac{|z|^{2\alpha-2}}{(|z|^{2\alpha}+1)^2}d\lambda(z).
\end{eqnarray*}
Since
\[\int_{\{r_1<|z|<r_2\}}
\frac{|z|^{2\alpha-2}}{(|z|^{2\alpha}+1)^2}d\lambda(z)\leq\frac{\pi}{\alpha},\]
we obtain from $(\ref{e:uniform continuous})$, $(\ref{e:S limit 0})$ that
\[\varlimsup\limits_{t\rightarrow-\infty}P_t\leq\frac{\pi v_0e^{-(1+\delta)\varphi(0)}}{\alpha}.\]
Since $\delta$ is an arbitrary positive number, we get $(\ref{ie:one dim limit})$.

\end{proof}

Now we begin to prove Proposition \ref{p:convergence-integral-psh}.

\begin{proof}
Let $\beta_v:=\sup\limits_{U}v$.

Without loss of generality, we may suppose that $\phi$ is negative
on $\Omega$.

We will use Hironaka's desingularization theorem (Lemma \ref{l:Hironaka}) to deal with the measure $d\lambda[\psi]$. This idea comes from the work \cite{Demailly2015}.

At first we use Lemma \ref{l:Hironaka} on $X$ to resolve the singularities of $Y$ and we denote the corresponding proper modification by $\mu_1$. Next, we make a blow-up $\mu_2$ along $|Y'|$. Then we use Lemma \ref{l:Hironaka} again to resolve the singularities of $\Sigma$ and we denote the corresponding proper holomorphic modification by $\mu_3$, where $\Sigma$ denote the strict transform of $\{\psi=-\infty\}$ by $\mu_1\circ\mu_2$. Finally, we make a blow-up $\mu_4$ along $|\Sigma'|$. Thus we can get a proper holomorphic map $\mu:\widetilde{X}\longrightarrow X$, which is locally a finite composition of blow-ups with smooth centers and is equal to $\mu_1\circ\mu_2\circ\mu_3\circ\mu_4$. Moreover, $\widetilde{Y}$ and the divisor $\overline{\mu^{-1}(\{\psi=-\infty\})\setminus \widetilde{Y}}$ simultaneously have only normal crossings in $\widetilde{X}$, where $\widetilde{Y}$ denotes the strict transform of $\mu_2^{-1}(|Y'|)$ by $\mu_3\circ\mu_4$.

\textbf{Step 1: we will represent the measure $|f|^2d\lambda[\psi]$ on $Y^0\cap U$ explicitly as an integral on $\widetilde{Y}$ (see $(\ref{e:prop2-limit1})$).}

For any $\widetilde{x}\in\overline{\mu^{-1}(U)}\cap\mu^{-1}(\{\psi=-\infty\})$, there exists a relatively compact coordinate ball $(W;w_1,\cdots,w_n)$ contained in $\mu^{-1}(V)$ centered at $\widetilde{x}$ such that
$w^b=0$ is the zero divisor of the Jacobian $J_\mu$,
and $\psi\circ\mu$ can be written on $W$ as
\[\psi\circ\mu(w)=c\log|w^a|^2+\widetilde u(w),\]
where $c$ is a positive number, $w:=(w_1,\cdots,w_n)$, $\widetilde u\in C^\infty(\overline{W})$, $w^a:=\prod\limits_{p=1}^n w_p^{a_p}$ and $w^b:=\prod\limits_{p=1}^nw_p^{b_p}$ for some nonnegative integers $a_p$ and $b_p$.

Let $D_p:=\{w_p=0\}$. Then as proved in \cite{Demailly2015}, the multiplier ideal sheaf $\mathcal{I}(\psi)$ is given by the direct image formula
\[\mathcal{I}(\psi)=\mu_\ast\mathcal{O}_{\widetilde{X}}(-\sum\limits_{p=1}^n\lfloor ca_p-b_p\rfloor_+D_p),\]
where $\lfloor ca_p-b_p\rfloor_+$ denotes the minimal nonnegative integer bigger than $ca_p-b_p-1$. Since $\psi$ has log canonical singularities, by the construction of $\mu$ and Lemma \ref{l:Hironaka}, one of the following cases is true on $W$:
\begin{enumerate}
\item[$(A)$]
$\widetilde{Y}$ is given on $W$ precisely by $D_{p_0}$ (if $W$ is small enough) for some $p_0$ satisfying $ca_{p_0}-b_{p_0}=1$, and $ca_p-b_p\leq1$ for $p\neq p_0$;
\item[$(B)$]
$\widetilde{Y}\cap W=\emptyset$, and $ca_p-b_p\leq1$.
\end{enumerate}
By definition, the measure $|f|^2d\lambda[\psi]$ can be defined as
\begin{equation}\label{e:prop2-limit}
g\mapsto\varlimsup\limits_{t\rightarrow-\infty}\int_{\{t<c\log|w^a|^2+\widetilde u(w)<t+1\}}\frac{|\widetilde{f}\circ\mu|^2(\widetilde{g}\circ\mu)\xi e^{-\widetilde u}}{|w^{ca-b}|^2}d\lambda(w),
\end{equation}
where $d\lambda(w):=$ the Lebesgue measure with respect to the coordinate vector $w$, $\widetilde{f}$ is a holomorphic extension of $f$ to $\Omega$, $g$ and $\widetilde{g}$ are defined as in Definition \ref{d:measure}, and $\xi$ is the smooth positive function $\frac{|J_\mu|^2}{|w^b|^2}$ (as stated in \cite{Demailly2015}, one would still have to take into account a partition of unity on the various coordinate charts covering the fibers of $\mu$, but we will avoid this technicality for the simplicity of notation).

In Case $(A)$, let us denote $w=(w',w_{p_0})\in \mathbb{C}^{n-1}\times\mathbb{C}$, $a=(a',a_{p_0})$, $b=(b',b_{p_0})$ and $d\lambda(w)=d\lambda(w')d\lambda(w_{p_0})$. Then $(\ref{e:prop2-limit})$ becomes
\[g\mapsto\varlimsup\limits_{t\rightarrow-\infty}\int_{\{t<c\log|w^a|^2+\widetilde u(w)<t+1\}}\frac{|\widetilde{f}\circ\mu|^2}{|(w')^{ca'-b'}|^2}\cdot
\frac{(\widetilde{g}\circ\mu)\xi e^{-\widetilde u}}{|w_{p_0}|^2}d\lambda(w).\]
Since the domain of integration can be written as
\[\big\{e^{t-\widetilde u(w)}|(w')^{a'}|^{-2c}<|w_{p_0}|^{2ca_{p_0}}<e^{t+1-\widetilde u(w)}|(w')^{a'}|^{-2c}\big\},\]
$(\ref{e:prop2-limit})$ becomes
\begin{equation}\label{e:prop2-limit1}
g\mapsto \frac{\pi}{ca_{p_0}}\int_{w'\in D_{p_0}}\frac{|f\circ\mu|^2}{|(w')^{ ca'-b'}|^2}\cdot(g\circ\mu)\xi e^{-\widetilde u}d\lambda(w').
\end{equation}

Set $\kappa=\{p:\,ca_p-b_p=1\}$.

If $p\in\kappa\setminus\{p_0\}$, then Theorem \ref{l:Hironaka} and
the construction of $\mu$ imply that an image of $D_p$ under a
finite sequence of blow-ups in the desingularization process must be
contained in a smooth center contained in $Y$ or $\mu_2^{-1}(|Y'|)$.
Hence the images of $D_p$ and $D_p\cap D_{p_0}$ coincide under the
composition of these blow-ups.

Since it is implied from $(\ref{ie:prop2-f-L2})$ and $(\ref{e:prop2-limit1})$ that $f\circ\mu\big|_{D_p\cap D_{p_0}}=0$, we obtain that
\begin{equation}\label{e:f-vanish}
f\circ\mu \big|_{D_p}=0
\end{equation}
holds for all $p\in \kappa\setminus\{p_0\}$ in Case $(A)$.

Similarly, we can get that $(\ref{e:f-vanish})$ holds for all $p\in\kappa$ in Case $(B)$. Then $(\ref{e:prop2-limit})$ is the zero measure in Case $(B)$.

Therefore, we represent the measure $|f|^2d\lambda[\psi]$ on $Y^0\cap U$ explicitly as in $(\ref{e:prop2-limit1})$.

\textbf{Step 2: we will obtain some uniform estimates for $f_t\circ\mu$.}

By Cauchy's inequality for holomorphic functions, it follows from $(\ref{ie:special-extension-1})$ that
\begin{equation}\label{ie:sup-norm-family-derivative}
\sup\limits_{U_1}|\partial^\gamma f_t|^2 \leq C_1\sup\limits_{V}|f_t|^2\leq
C_1C e^{-\beta_1t}
\end{equation}
for any $t\in(-\infty,0)$ and any multi-index $\gamma$ satisfying $|\gamma|\leq n$, where $U_1\subset\subset V$ is a neighborhood of $\overline{U}$, and $C_1$ is a positive number independent of $t$ and $\gamma$.

Let $W_t:=W\cap\mu^{-1}(U)\cap\{\psi\circ\mu<t+c_2\}$.

In Case $(A)$, by applying the mean value theorem to $f_t\circ\mu$ successively along the directions in $\kappa$, we get from $(\ref{ie:sup-norm-family-derivative})$ and $(\ref{e:f-vanish})$ that for any $w=(w',w_{p_0})\in W_t$,
\begin{eqnarray}
& &|f_t\circ\mu(w',w_{p_0})-f_t\circ\mu(w',0)|^2\label{ie:case1-ft}
\\&\leq&C_2\prod\limits_{p\in\kappa}|w_p|^2\sup\limits_{|\gamma|\leq|\kappa|}\sup\limits_{\mu^{-1}(U_1)}|\partial^\gamma f_t|^2\nonumber
\\&\leq&C_3e^{-\beta_1t}\prod\limits_{p\in\kappa}|w_p|^2\nonumber
\end{eqnarray}
and
\begin{equation}\label{ie:f-estimate}
|f_t\circ\mu(w',0)|^2=|f\circ\mu(w',0)|^2\leq C_4\prod\limits_{p\in\kappa\setminus\{p_0\}}|w_p|^2
\end{equation}
when $t$ is small enough, where $C_2$, $C_3$ and $C_4$ are positive numbers independent of $t$.

In Case $(B)$, if $\kappa\neq\emptyset$, take $p_1\in\kappa$ and denote $w=(w'',w_{p_1})$. Since $f_t\circ\mu(w'',0)=f\circ\mu(w'',0)=0$, by the similar method we have that
\begin{equation}\label{ie:case2-ft-1}
|f_t\circ\mu(w'',w_{p_1})|^2
\leq C_5e^{-\beta_1t}\prod\limits_{p\in\kappa}|w_p|^2
\end{equation}
for any $w=(w'',w_{p_1})\in W_t$ when $t$ is small enough, where $C_5$ is a positive number independent of $t$. If $\kappa=\emptyset$, $(\ref{ie:special-extension-1})$ implies that
\begin{equation}\label{ie:case2-ft-2}
|f_t\circ\mu(w)|^2\leq Ce^{-\beta_1t}
\end{equation}
for any $w\in W_t$.

\textbf{Step 3: the proof of $(\ref{ie:prop2-final-estimate})$.}

Let $j$ be a positive integer. Then $(\ref{ie:special-extension-2})$
implies that
\begin{eqnarray*}
& &\frac{1}{e^t}\int_{\{\phi\leq-j\}\cap
U\cap\{\psi<t+c_2\}}|f_t|^2e^{-\phi}d\lambda
\\&\leq&
\frac{1}{e^t}\int_{\{\phi\leq-j\}\cap
U\cap\{\psi<t+c_2\}}
|f_t|^2e^{-(1+\beta)\phi-\beta j}d\lambda
\\&\leq&C e^{-\beta j}
\end{eqnarray*}
for any $t\in(-\infty,0)$.

Therefore, for every $\epsilon\in(0,1)$, there exists a positive integer
$j_\epsilon$ such that
\begin{eqnarray}
& &\int_{\{\phi\leq-j_\epsilon\}\cap U\cap\{t-c_1<\psi<t+c_2\}}\frac{ e^tv|f_t|^2e^{-\phi}}
{(e^\psi+ e^t)^2}d\lambda\label{ie:singular-part-1}
\\&\leq&\frac{1}{(e^{-c_1}+1)^2e^t}\int_{\{\phi\leq-j_\epsilon\}\cap
U\cap\{\psi<t+c_2\}}v|f_t|^2e^{-\phi}d\lambda\nonumber
\\&\leq&
\frac{\beta_vC e^{-\beta
j_\epsilon}}{(e^{-c_1}+1)^2}\nonumber
\\&<&\frac{\epsilon}{2}\nonumber
\end{eqnarray}
for any $t\in(-\infty,0)$.

Set $\phi_\epsilon =\max\{\phi,-j_\epsilon\}$. We want to prove
\begin{equation}\label{ie:prop2-estimate-cutoff}
\varlimsup\limits_{ t\rightarrow-\infty }\int_{U\cap\{t-c_1<\psi<t+c_2\}}\frac{e^tv|f_t|^2e^{-\phi_\epsilon}} {(e^\psi+e^t)^2}d\lambda\leq\int_{U\cap Y^0 }v|f|^2e^{-\phi_\epsilon} d\lambda[\psi].
\end{equation}
Set
\[I_0=\varlimsup\limits_{ t\rightarrow-\infty }\int_{W\cap\mu^{-1}(U)\cap\{t-c_1<\psi\circ\mu<t+c_2\}}\frac{e^t(v\circ\mu)|f_t\circ\mu|^2e^{-\phi_\epsilon\circ\mu}|J_\mu|^2} {(e^{\psi\circ\mu}+e^t)^2}d\lambda.\]
Then by Step 1, it suffices to prove that
\begin{equation}\label{ie:prop2-final-estimate1}
I_0\leq\frac{\pi}{ca_{p_0}}\int_{W\cap\mu^{-1}(U)\cap D_{p_0}}\frac{(v\circ\mu)|f\circ\mu|^2\xi e^{-\widetilde u-\phi_\epsilon\circ\mu}}{|(w')^{ca'-b'}|^2}d\lambda(w')
\end{equation}
in Case $(A)$ and $I_0=0$ in Case $(B)$, where $\xi$ is the smooth positive function $\frac{|J_\mu|^2}{|w^b|^2}$ defined in Step 1.

In Case $(A)$, let
\[\Phi_t(w'):=\int_{W_{t,w'}}\frac{e^t(v\circ\mu)|f_t\circ\mu|^2e^{-\phi_\epsilon\circ\mu}|J_\mu|^2} {(e^{\psi\circ\mu}+e^t)^2}d\lambda(w_{p_0})\]
and
\[\Phi(w'):=\frac{\pi}{ca_{p_0}}\cdot\frac{v\circ\mu(w',0)|f\circ\mu(w',0)|^2\xi(w',0)e^{-\widetilde u(w',0)-\phi_\epsilon\circ\mu(w',0)}}{|(w')^{ca'-b'}|^2},\]
where $W_{t,w'}$ is the $1$-dimensional open set
\[\big\{e^{t-c_1-\widetilde{u}(w',w_{p_0})}|(w')^{a'}|^{-2c}<|w_{p_0}|^{2ca_{p_0}}<e^{t+c_2-\widetilde{u}(w',w_{p_0})}|(w')^{a'}|^{-2c}\big\}\cap W\cap\mu^{-1}(U)\]
for every fixed $t$ and $w'$ ($w'\in D_{p_0}\setminus\underset{p\neq p_0}\cup D_p$).
Then
\begin{equation}\label{ie:I_0}
I_0=\varlimsup\limits_{ t\rightarrow-\infty }\int_{W\cap\mu^{-1}(U)\cap D_{p_0}}\Phi_t(w')d\lambda(w').
\end{equation}

Since $-c_1<\psi\circ\mu-t<c_2$
holds on $W_{t,w'}$, we obtain from $(\ref{ie:case1-ft})$ and $(\ref{ie:f-estimate})$ that
\begin{eqnarray*}
\Phi_t(w')&\leq&C_6\int_{W_{t,w'}}\frac{(v\circ\mu)|f_t\circ\mu|^2e^{-\phi_\epsilon\circ\mu}|J_\mu|^2}{e^{\psi\circ\mu}}d\lambda(w_{p_0})
\\&\leq&C_7\int_{W_{t,w'}}\frac{|f_t\circ\mu|^2}{|w^{ca-b}|^2}d\lambda(w_{p_0})
\\&\leq&C_8\int_{W_{t,w'}}\frac{\prod\limits_{p\in\kappa}|w_p|^2}{|w^{(1+\beta_1)ca-b}|^2}d\lambda(w_{p_0})
+C_8\int_{W_{t,w'}}\frac{\prod\limits_{p\in\kappa\setminus\{p_0\}}|w_p|^2}{|w^{ca-b}|^2}d\lambda(w_{p_0}),
\end{eqnarray*}
where $C_7$ and $C_8$ are positive numbers independent of $t$.

Since it is easy to prove that the right-hand side of the above inequality is dominated by a function of $w'$ which is independent of $t$ and belongs to $L^1(W\cap\mu^{-1}(U)\cap D_{p_0})$ when
\[\beta_1<\min\limits_{\{p:\,a_p\neq0\}}\frac{1-(ca_p-b_p)+\lfloor ca_p-b_p\rfloor_+}{ca_p},\]
it follows from $(\ref{ie:I_0})$ and Fatou's lemma that
\begin{equation}\label{ie:I_0-Fatou}
I_0\leq\int_{W\cap\mu^{-1}(U)\cap D_{p_0}}\varlimsup\limits_{ t\rightarrow-\infty }\Phi_t(w')d\lambda(w').
\end{equation}

Since $(\ref{ie:case1-ft})$ implies that
\[\lim\limits_{ t\rightarrow-\infty }\sup\limits_{w_{p_0}\in W_{t,w'}}|f_t\circ\mu(w',w_{p_0})-f\circ\mu(w',0)|=0\]
for every fixed $w'\in \big(W\cap\mu^{-1}(U)\cap D_{p_0}\big)\setminus\underset{p\neq p_0}\cup(D_{p_0}\cap D_p)$ when $\beta_1<1/ca_{p_0}$, it follows from Lemma \ref{l:one dim lemma} that
\[\varlimsup\limits_{ t\rightarrow-\infty }
\Phi_t(w')\leq\Phi(w'),\quad\forall
w'\in \big(W\cap\mu^{-1}(U)\cap D_{p_0}\big)\setminus\underset{p\neq p_0}\cup(D_{p_0}\cap D_p).\]
Hence $(\ref{ie:prop2-final-estimate1})$ follows from $(\ref{ie:I_0-Fatou})$. Similarly, we can obtain from $(\ref{ie:case2-ft-1})$ and $(\ref{ie:case2-ft-2})$ that $I_0=0$ in Case $(B)$ when
\[\beta_1<\min\limits_{\{p:\,a_p\neq0\}}\frac{1-(ca_p-b_p)+\lfloor ca_p-b_p\rfloor_+}{ca_p}.\]
Thus we get $(\ref{ie:prop2-estimate-cutoff})$.

It is easy to see that $(\ref{ie:prop2-final-estimate})$ follows from $(\ref{ie:singular-part-1})$ and $(\ref{ie:prop2-estimate-cutoff})$. Thus we finish the proof of Proposition \ref{p:convergence-integral-psh}.

\end{proof}


\section{Proof of Theorem \ref{t:Zhou-Zhu1}}\label{section-proof-of-theorem}

Without loss of generality, we can suppose that $f$ is not $0$ identically.

Let $h_0$ be any fixed smooth metric of $L$ on $X$. Then
$h=h_0e^{-\phi}$ for some global function $\phi$ on $X$, which is quasi-plurisubharmonic
by the assumption in the theorem.

Since $X$ is weakly pseudoconvex, there exists a smooth
plurisubharmonic exhaustion function $P$ on $X$. Let $X_k:=\{P<k\}$
($k=1,2,\cdots$, we choose $P$ such that $X_1\neq\emptyset$).

Our proof consists of several steps. We will discuss for fixed $k$
until the end of Step 5.

We will give the proof for the line bundle $L$ in the first five steps, and we will give the proof for the vector bundle $E$ in Step 6.

\textbf{Step 1: construction of a family of special smooth
extensions $\tilde{f}_t$ of $f$ to a neighborhood of
$\overline{X_k}\cap Y$ in $X$.}

In order to deal with singular metrics of holomorphic line bundles
on weakly pseudoconvex K\"{a}hler manifolds, we construct in this
step a family of smooth extensions $\tilde{f}_t$ of $f$
satisfying some special estimates by using the results in Section
\ref{section-singular-metric}.

Let $\epsilon\in(0,\frac{1}{2})$.

For the sake of clearness, we divide this step into four parts.

\textbf{Part I: construction of local coordinate patches
$\{\Omega_i\}_{i=1}^N$, $\{U_i\}_{i=1}^N$ and a partition of unity
$\{\xi_i\}_{i=1}^{N+1}$.}

For any point $x\in Y$, we can find a local coordinate ball
$\Omega_x'$ in $X$ centered at $x$ such that there exists a local holomorphic frame of
$L$ on $\Omega_x'$ and such that $\phi$
can be written as a sum of a smooth function and a plurisubharmonic
function on $\Omega_x'$. Moreover, we assume that $\psi$ can be written on $\Omega_x'$ as
\begin{equation}\label{e:Psi-local}
\psi=c_x\log\sum\limits_{1\leq j\leq j_0}|g_{x,j}|^2+u_x,
\end{equation}
where $c_x$ is a positive number, $g_{x,j}\in\mathcal{O}_X(\Omega_x')$ and $u_x\in C^\infty(\Omega_x')$.

Let $U_x\subset\subset V_x\subset\subset\Omega_x\subset\subset\Omega_x'$ be three small coordinate balls.

Since $\overline {X_k}\cap Y$ is compact, there exist points
$x_1,x_2,\cdots,x_N\in \overline {X_k}\cap Y$ such that $\overline
{X_k}\cap Y\subset \overset{N}{\underset{i=1}\cup} U_{x_i}$.

For simplicity, we will denote $\Omega_{x_i}'$, $\Omega_{x_i}$, $U_{x_i}$, $V_{x_i}$ and $u_{x_i}$ by $\Omega_i'$, $\Omega_i$,
$U_i$, $V_i$ and $u_i$ respectively. We will write the local expression $(\ref{e:Psi-local})$ on $\Omega_i'$ by
\[\psi=\Upsilon_i+u_i.\]

Choose an open set $U_{N+1}$ in $X$ such that $\overline {X_k}\cap
Y\subset X\setminus\overline {
U_{N+1}}\subset\subset\overset{N}{\underset{i=1}\cup} U_i$. Set
$U=X\setminus\overline {U_{N+1}}$.

Let $\{\xi_i\}_{i=1}^{N+1}$ be a partition of unity subordinate to
the cover $\{U_i\}_{i=1}^{N+1}$ of $X$. Then
$\mathrm{supp}\,\xi_i\subset\subset U_i$ for $i=1,\cdots,N$ and
$\sum\limits_{i=1}^N\xi_i=1$ on $U$.

\textbf{Part II: construction of local holomorphic extensions
$\widehat{f}_{i,t}$ $(1\leq i\leq N)$ of $f$ to
$\Omega_i\cap\{\psi<t+c_2\}$, where $c_2$ will be defined in
this part.}

By Remark \ref{r:thm-Stein}, $f$ has local $L^2$ extensions to local coordinate balls around every point in $Y$. Hence $f$ is indeed a holomorphic section well defined on $Y$ (not only on $Y^0$). By Step 1 (see $(\ref{e:prop2-limit1})$) in the proof of Proposition \ref{p:convergence-integral-psh}, $(\ref{ie:thm-f-finite})$ is equivalent to
\[\int_{D_{p_0}}\frac{|f\circ\mu|^2_{\omega,h_0}\xi e^{-\widetilde u-\phi\circ\mu}}{|(w')^{ ca'-b'}|^2}d\lambda(w')<+\infty.\]
Hence by Theorem \ref{l:strong-openness}, there exists a positive
number $\beta\in(0,1)$ such that
\begin{equation}\label{ie:f-finite}
\int_{\Omega_i\cap Y^0}|f|^2_{\omega,h_0}e^{-(1+\beta)\phi}dV_{X,\omega}[\psi]<+\infty\quad(1\leq i\leq N).
\end{equation}

Let $\widetilde{\alpha}_0<\alpha_0$ be a fixed number such that $R$ is decreasing on $(-\infty,\widetilde{\alpha}_0]$. Then set $R_0(t)=R(\widetilde{\alpha}_0)e^{-\beta_2(t-\widetilde{\alpha}_0)}$,
$t\in(-\infty,\widetilde{\alpha}_0]$, where $\beta_2$ is a positive number which will be determined later in Step 4.
Let
\[R_1(t):=\min\{R_0(t+\widetilde{\alpha}_0),R(t+\widetilde{\alpha}_0)\},\quad t\in(-\infty,0].\]
Then $R_1$ is decreasing and thereby satisfies all the requirements for the functions in $\mathfrak{R}_{0,\alpha_1}$ except that $R_1$ is only continuous.

Let $c_1=c_2:=\log\frac{2-\epsilon}{\epsilon}$, $m_i:=\inf\limits_{\Omega_i}u_i$ and $M_i:=\sup\limits_{\Omega_i}u_i$.

For each fixed $t\in(-\infty,0)$, by Remark \ref{r:thm-Stein}, we
apply Theorem \ref{t:Zhou-Zhu1} to the Stein manifold $\Omega_i\cap\{\Upsilon_i<t+c_2-m_i\}$, to the negative plurisubharmonic function $\Upsilon_i-t-c_2+m_i$, to the
holomorphic section $f$ on $\Omega_i\cap Y^0$ with the $L^2$ condition
$(\ref{ie:f-finite})$ and to the function $R_1$ ($R_1$ is only needed to be continuous by the remark after Theorem 2.1 in \cite{Guan-Zhou2013b}), and then we obtain $L^2$ extensions of $f$
from $\Omega_i\cap Y^0$ to
\[\Omega_i\cap\{\Upsilon_i<t+c_2-m_i\},\]
where we equip the line bundle $L$ with the singular metric
$h_0e^{-(1+\beta)\phi}$. More precisely, there exists a uniform
positive number $C_1$ (independent of $t$) and
holomorphic extensions $\widehat{f}_{i,t}$ ($1\leq i\leq
N$) of $f$ from $\Omega_i\cap Y^0$ to $\Omega_i\cap\{\Upsilon_i<t+c_2-m_i\}$
such that
\begin{eqnarray}
&
&\int_{\Omega_i\cap\{\Upsilon_i<t+c_2-m_i\}}\frac{|\widehat{f}_{i,t}|^2_{\omega,h_0}e^{-(1+\beta)\phi}}
{e^{\Upsilon_i-t-c_2+m_i}R_1(\Upsilon_i-t-c_2+m_i)}
dV_{X,\omega}\label{ie:widehat-f-estimate}\\
&\leq& C_1\int_{\Omega_i\cap
Y^0}|f|^2_{\omega,h_0}e^{-(1+\beta)\phi}dV_{X,\omega}[\Upsilon_i-t-c_2+m_i]\nonumber\\
&\leq&C_2e^t\int_{\Omega_i\cap
Y^0}|f|^2_{\omega,h_0}e^{-(1+\beta)\phi}dV_{X,\omega}[\psi],\nonumber
\end{eqnarray}
where $C_2$ is a positive number independent of $t$. Furthermore, we get that $f$ is in fact holomorphic on $\Omega_i\cap Y$ and $\widehat{f}_{i,t}=f$ on $\Omega_i\cap Y$.

\textbf{Part III: construction of local holomorphic extensions
$\tilde{f}_{i,t}$ $(1\leq i\leq N)$ of $f$ to $\Omega_i$.}

For each fixed $t$,
applying Proposition \ref{p:special-L2-extension} to the local
extensions $\widehat{f}_{i,t}$ $(1\leq i\leq N)$ with the
weight $(1+\beta)\phi$ and to the case
$\Upsilon=\Upsilon_i-t-c_2+m_i$, $\Omega=\Omega_i$ and
some small positive number $\beta_1$ which will be determined later in Step 4, we obtain from
$(\ref{ie:widehat-f-estimate})$ holomorphic sections
$\tilde{f}_{i,t}$ $(1\leq i\leq N)$ on $\Omega_i$ satisfying
$\tilde{f}_{i,t}=\widehat{f}_{i,t}=f$ on
$\Omega_i\cap Y^0$,
\begin{equation}\label{ie:special-extension--1}
\int_{\Omega_i\cap\{\Upsilon_i<t+c_2-m_i\}}\frac{|\tilde{f}_{i,t}|^2_{\omega,h_0}e^{-(1+\beta)\phi}}
{e^{\Upsilon_i-t-c_2+m_i}R_1(\Upsilon_i-t-c_2+m_i)}dV_{X,\omega}\leq
C_3e^t,
\end{equation}
and
\begin{equation}\label{ie:special-extension--2}
\int_{\Omega_i}\frac{|\tilde{f}_{i,t}|^2_{\omega,h_0}e^{-(1+\beta)\phi}}
{(1+e^{\Upsilon_i-t-c_2+m_i})^{1+\beta_1}}dV_{X,\omega}\leq
C_3e^t
\end{equation}
for some positive number $C_3$ independent of $t$.

Since $\sup\limits_{t\leq0}\big(e^tR_1(t)\big)<+\infty$, it follows
from $(\ref{ie:special-extension--1})$ that
\begin{equation}\label{ie:special-extension-a}
\int_{\Omega_i\cap\{\psi<t+c_2\}}
|\tilde{f}_{i,t}|^2_{\omega,h_0}e^{-(1+\beta)\phi}
dV_{X,\omega}\leq C_4e^t
\end{equation}
for any $t$, where
$C_4$ is a positive number independent of $t$.

Since $\Upsilon_i$ is bounded above on $\Omega_i$, it follows from
$(\ref{ie:special-extension--2})$ that
\begin{equation}\label{ie:special-extension-b}
\int_{\Omega_i}
|\tilde{f}_{i,t}|^2_{\omega,h_0}e^{-(1+\beta)\phi}
dV_{X,\omega}\leq C_5e^{-\beta_1t}
\end{equation}
for any $t$, where
$C_5$ is a positive number independent of $t$.

Since $|\tilde{f}_{i,t}|^2$ is subharmonic on $\Omega_i$,
by mean value inequality, we get
from $(\ref{ie:special-extension-b})$ that
\begin{equation}\label{ie:special-extension-bb}
\sup\limits_{V_i}
|\tilde{f}_{i,t}|^2_{\omega,h_0}\leq C_6e^{-\beta_1t}
\end{equation}
for any $t$, where
$C_6$ is a positive number independent of $t$.

Since $(\ref{ie:special-extension-a})$ and
$(\ref{ie:special-extension-bb})$ imply that the assumptions in
Proposition \ref{p:convergence-integral-psh} hold for
$\tilde{f}_{i,t}$, we apply Proposition \ref{p:convergence-integral-psh} to
$\tilde{f}_{i,t}$ $(1\leq i\leq N)$ and get
\begin{eqnarray}
& &\varlimsup\limits_{ t\rightarrow-\infty }
\int_{U_i\cap\{t-c_1<\psi<t+c_2\}}
\frac{ e^t\xi_i|\tilde{f}_{i,t}|^2_{\omega,h_0}
e^{-\phi}}{(e^\psi+ e^t)^2}dV_{X,\omega}\label{ie:family-integral-limit}\\
&\leq&\int_{U_i\cap Y^0}\xi_i|f|^2_{\omega,h_0} e^{-\phi}dV_{X,\omega}[\psi],\nonumber
\end{eqnarray}
which will be used in Step 4.

\textbf{Part IV: construction of a family of smooth extensions
$\tilde{f}_t$ of $f$ to a neighborhood of
$\overline{X_k}\cap Y$ in $X$.}

Define
$\tilde{f}_t=\sum\limits_{i=1}^N\xi_i\tilde{f}_{i,t}$
for all $t$.

Since
\[\tilde{f}_t|_{U_{j}}=\sum\limits_{i=1}^N\xi_{i}
\tilde{f}_{j,t}+\sum\limits_{i=1}^N\xi_{i}(\tilde{f}_{i,t}-
\tilde{f}_{j,t})
=\tilde{f}_{j,t}+\sum\limits_{i=1}^N\xi_{i}(\tilde{f}_{i,t}-
\tilde{f}_{j,t})\]
 for any $j=1,\cdots,N$, we have
\begin{equation}\label{e:dbar of smooth extension}
|\mathrm{D}''\tilde{f}_t|_{\omega,h_0}\big|_{U_j} =|\sum\limits_{i=1}^N\bar\partial\xi_{i}\wedge(\tilde{f}_{i,t}- \tilde{f}_{j,t})
|_{\omega,h_0},\quad\forall t.
\end{equation}

Let $\mu$ and $W$ be as in the beginning of the proof of Proposition \ref{p:convergence-integral-psh} (here $W$ is centered at a point $\widetilde{x}\in\overline{\mu^{-1}(U_i\cap U_j)}\cap\{\psi=-\infty\}$). For similar reasons as in $(\ref{ie:case1-ft})$, $(\ref{ie:case2-ft-1})$
and $(\ref{ie:case2-ft-2})$, we get from $(\ref{ie:special-extension-bb})$ that
\begin{equation}
|\tilde{f}_{i,t}\circ\mu-\tilde{f}_{j,t}\circ\mu|^2_{\omega,h_0}
\big|_{W_{i,j,t}}\leq C_7e^{-\beta_1t}\prod\limits_{p\in\kappa}|w_p|^2\label{ie:estimate
of smooth extension1}
\end{equation}
when $\kappa\neq\emptyset$ and $t$ is small enough, and that
\begin{equation}\label{ie:estimate of smooth extension2}
|\tilde{f}_{i,t}\circ\mu-\tilde{f}_{j,t}\circ\mu|^2_{\omega,h_0}
\big|_{W_{i,j,t}}\leq C_7e^{-\beta_1t}
\end{equation}
when $\kappa=\emptyset$ and $t$ is small enough, where
\[W_{i,j,t}:=W\cap\mu^{-1}(U_i\cap U_j)\cap\{\psi\circ\mu<t+c_2\}\]
and $C_7$ is a positive number independent of $t$.

\textbf{Step 2: singularity attenuation process for the current
$\sqrt{-1}\partial\bar\partial\phi$.}

Since the singularities of $\sqrt{-1}\partial\bar\partial\psi$
obstruct the application of Lemma \ref{l:Demailly1994}, we will work on $\widetilde{X}$ first and then go back to $X$. Some ideas in this step come from \cite{Yi2}.

Let $\mu:\widetilde{X}\rightarrow X$
be as in the beginning of the proof of Proposition \ref{p:convergence-integral-psh}. Let
$\widetilde{X}_{k+1}:=\mu^{-1}(X_{k+1})$,
$\widetilde{X}_k:=\mu^{-1}(X_k)$ and $\widetilde{\Sigma_0}:=\mu^{-1}(\Sigma_0)$, where $\Sigma_0:=\{\psi=-\infty\}$. Then
\[\gamma_1:=\sqrt{-1}\partial\bar\partial(\psi\circ\mu)-\sum\limits_{j} q_j [D_j]\]
is a smooth real $(1,1)$-form for some positive numbers $ q_j $, where $(D_j)$ are the irreducible components of $\widetilde{\Sigma_0}$.
It is not hard to prove the following lemma and we won't give its proof.

\begin{lem}\label{l:Kahler metric under resolution}
There exists a positive number $\widetilde{n}_k$ such that
\[\widetilde{\omega}_{k+1}:=\widetilde{n}_k\mu^*\omega+
\sqrt{-1}\partial\bar\partial(\psi\circ\mu)-\sum\limits_j q_j [D_j]\]
is a K\"{a}hler metric on $\widetilde{X}_{k+1}$.
\end{lem}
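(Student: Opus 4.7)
The plan is to show that the smooth real $(1,1)$-form $\widetilde{\omega}_{k+1} = \widetilde{n}_k \mu^*\omega + \gamma_1$ is positive definite on $\widetilde{X}_{k+1}$ for $\widetilde{n}_k$ sufficiently large. Smoothness and $d$-closedness are automatic from the smoothness of $\mu^*\omega$ and $\gamma_1$. Since $P$ is a plurisubharmonic exhaustion of $X$ and $\mu$ is proper, $\overline{\widetilde{X}_{k+1}}$ is compact in $\widetilde{X}$, so it suffices to produce, at every point $\widetilde{x} \in \overline{\widetilde{X}_{k+1}}$, a neighborhood on which the form is positive definite for all large enough $\widetilde{n}_k$; a finite subcover then yields a uniform choice of $\widetilde{n}_k$.

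Let $E$ denote the exceptional locus of $\mu$. Because $\psi$ has log canonical singularities along $Y = V(\mathcal{I}(\psi))$, locally $Y = \{\psi = -\infty\}$, and all the blow-up centers used in the construction of $\mu = \mu_1\circ\mu_2\circ\mu_3\circ\mu_4$ lie over this set, so $E \subset \widetilde{\Sigma_0}$. At a point $\widetilde{x} \notin E$, the map $\mu$ is a local biholomorphism, hence $\mu^*\omega$ is strictly positive on a small neighborhood of $\widetilde{x}$; as $\gamma_1$ is continuous and therefore bounded on such a neighborhood, $\widetilde{n}_k \mu^*\omega + \gamma_1$ is positive definite for all sufficiently large $\widetilde{n}_k$.

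At a point $\widetilde{x} \in E$, I would use the local coordinate chart $(W; w_1, \ldots, w_n)$ from the proof of Proposition \ref{p:convergence-integral-psh}, in which $\psi \circ \mu = c \log|w^a|^2 + \tilde u$ for some smooth $\tilde u$. Factoring $g_j \circ \mu = w^a \hat g_j$ with the $\hat g_j$ having no common zero on $W$ and applying Poincar\'e--Lelong, one obtains
\[
\gamma_1\big|_W = 2\pi c\, \sigma^*\omega_{FS} + \sqrt{-1}\,\partial\bar\partial(u\circ\mu),
\]
where $\sigma : W \to \mathbb{P}^{j_0 - 1}$ is the holomorphic map $w \mapsto [\hat g_1(w) : \cdots : \hat g_{j_0}(w)]$ and $\omega_{FS}$ is the Fubini--Study form. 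Hironaka's desingularization is arranged so that the successive blow-ups separate the vanishing directions of $(g_1,\ldots,g_{j_0})$, which forces $d\sigma_{\widetilde{x}}$ to be injective on $\ker (d\mu_{\widetilde{x}})$; consequently $\sigma^*\omega_{FS}$ is positive definite on $\ker (d\mu_{\widetilde{x}})$. Combining this Fubini--Study positivity on the kernel with the positivity of $\widetilde{n}_k \mu^*\omega$ on a complementary subspace (which grows linearly in $\widetilde{n}_k$), and absorbing the bounded smooth perturbation $\sqrt{-1}\partial\bar\partial(u\circ\mu)$, one obtains positive definiteness of $\widetilde{n}_k \mu^*\omega + \gamma_1$ on a neighborhood of $\widetilde{x}$ for $\widetilde{n}_k$ large.

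The principal obstacle is the injectivity of $d\sigma_{\widetilde{x}}$ on $\ker (d\mu_{\widetilde{x}})$; once this is in hand, the rest is routine linear algebra plus the finite subcover argument afforded by compactness of $\overline{\widetilde{X}_{k+1}}$. Verifying this injectivity reduces to a chart-by-chart inspection of the iterated blow-up structure of $\mu$, using that the centers of $\mu_1, \mu_3$ are contained in the singular loci of $Y$ and of the strict transform of $\{\psi = -\infty\}$, while $\mu_2, \mu_4$ blow up the resulting smooth strata.
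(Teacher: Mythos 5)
Your strategy is sound in outline: decompose $\gamma_1\big|_W = 2\pi c\, \sigma^*\omega_{FS} + \sqrt{-1}\partial\bar\partial(u\circ\mu)$ with $\sigma = [\hat{g}_1:\cdots:\hat{g}_{j_0}]$, reduce the global positivity of $\widetilde{n}_k\mu^*\omega + \gamma_1$ on the compact set $\overline{\widetilde{X}_{k+1}}$ to pointwise positivity of $\gamma_1$ on $\ker(d\mu)$, and invoke the usual linear-algebra-plus-finite-subcover argument. The decomposition is correct, and so is the observation that the exceptional set of $\mu$ lies in $\widetilde{\Sigma_0}$.

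The gap is exactly the claim you yourself flag as "the principal obstacle": that $d\sigma_{\widetilde{x}}$ is injective on $\ker(d\mu_{\widetilde{x}})$. You assert this follows from Hironaka's theorem and the specific structure of $\mu_1,\ldots,\mu_4$, but it does not. Take $X = \mathbb{C}^2$ and $\psi = \log(|z_1|^4 + |z_2|^4)$, i.e.\ $c=1$, $g_1 = z_1^2$, $g_2 = z_2^2$, $u = 0$; then $\mathcal{I}(\psi) = \mathfrak{m}$, $Y = \{0\}$ is smooth, and $\psi$ is log canonical along $Y$. Tracing the paper's construction, $\mu_1,\mu_3,\mu_4$ are trivial and $\mu_2$ blows up $\{0\}$, so $\mu$ is a single blow-up of the origin. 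In the chart $z_1 = w_1$, $z_2 = w_1 w_2$ one finds $\hat{g}_1 = 1$, $\hat{g}_2 = w_2^2$, hence $\sigma(w) = [1:w_2^2]$, and $d\sigma$ annihilates $\partial_{w_2}\in\ker(d\mu)$ at $w_2 = 0$. Correspondingly $\gamma_1 = \sqrt{-1}\partial\bar\partial\log(1+|w_2|^4)$ has vanishing Hessian at $w_2 = 0$, while $\mu^*\omega$ is already degenerate in the $\partial_{w_2}$-direction there, so $\widetilde{n}\mu^*\omega + \gamma_1$ is degenerate at that point for every $\widetilde{n}$. Since further blow-ups do not remove the ramification of $\sigma$ (a blow-up is an isomorphism over a dense open set), this is not repairable by the "chart-by-chart inspection" you propose; the injectivity you need must be imposed, not deduced from the stated construction of $\mu$, and as written the argument does not close.
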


Since $\mu:\widetilde{X}\setminus\widetilde{\Sigma_0}\rightarrow X\setminus
\Sigma_0$ is biholomorphic and
$\sum\limits_j q_j [D_j]\big|_{\widetilde{X}
\setminus\widetilde{\Sigma_0}}=0$,
the curvature assumptions $(i)$ and $(ii)$ in Theorem
\ref{t:Zhou-Zhu1} implies that
\[\sqrt{-1}\partial\bar\partial(\phi\circ\mu)
\big|_{\widetilde{X}\setminus\widetilde{\Sigma_0}}+\gamma_2
\big|_{\widetilde{X}\setminus\widetilde{\Sigma_0}}\geq 0\]
and
\[\sqrt{-1}\partial\bar\partial(\phi\circ\mu)
\big|_{\widetilde{X}\setminus\widetilde{\Sigma_0}}+\gamma_3
\big|_{\widetilde{X}\setminus\widetilde{\Sigma_0}}\geq0\] hold on
$\widetilde{X}\setminus\widetilde{\Sigma_0}$, where
\[\gamma_2:=\sqrt{-1}\mu^*\Theta_{L,h_0}+\gamma_1,
\quad\gamma_3:=\sqrt{-1}\mu^*\Theta_{L,h_0}+\bigg(1+\frac{1}{\widetilde{\chi}(\alpha\circ\mu)}\bigg)\gamma_1.\]
Since
$\gamma_2$ and $\gamma_3$ are continuous on
$\widetilde{X}$, and $\phi\circ\mu$ is quasi-plurisubharmonic on
$\widetilde{X}$, we get that
\begin{equation}\label{ie:curvature bound1}\sqrt{-1}\partial\bar\partial(\phi\circ\mu)
+\gamma_2\geq 0
\end{equation}
and
\begin{equation}\label{ie:curvature bound2}
\sqrt{-1}\partial\bar\partial(\phi\circ\mu) +\gamma_3\geq0
\end{equation}
hold on $\widetilde{X}$. Since there must exist a continuous
nonnegative $(1,1)$-form $\varpi_{k+1}$ on the K\"{a}hler manifold
$(\widetilde{X}_{k+1},\widetilde{\omega}_{k+1})$ such that
\[(\sqrt{-1}\Theta_{T_{\widetilde{X}_{k+1}}}+\varpi_{k+1}\otimes \mathrm{Id}_{T_{\widetilde{X}_{k+1}}})
(\kappa_1\otimes \kappa_2,\kappa_1\otimes \kappa_2)\geq0\quad
(\forall\kappa_1,\kappa_2\in T_{\widetilde{X}_{k+1}})\] holds on
$\widetilde{X}_{k+1}$, by Theorem \ref{l:Demailly1994}, we obtain
from $(\ref{ie:curvature bound1})$ and $(\ref{ie:curvature bound2})$
a family of functions
$\{\widetilde{\phi}_{\varsigma,\rho}\}_{\varsigma>0,
\rho\in(0,\rho_1)}$ on a neighborhood of the closure of
$\widetilde{X}_k$ such that
\begin{enumerate}
\item[$(i)$]
$\widetilde{\phi}_{\varsigma,\rho}$ is quasi-plurisubharmonic on a neighborhood of
the closure of $\widetilde{X}_k$, smooth on $\widetilde{X}_{k}
\setminus E_\varsigma(\phi\circ\mu)$, increasing with respect to
$\varsigma$ and $\rho$ on $\widetilde{X}_k$, and converges to
$\phi\circ\mu$ on $\widetilde{X}_k$ as $\rho\rightarrow0$,
\item[$(ii)$]
$\frac{\sqrt{-1}}{\pi}\partial\bar\partial
\widetilde{\phi}_{\varsigma,\rho}\geq-\frac{\gamma_2}{\pi}
-\varsigma \varpi_{k+1}-\delta_\rho\widetilde{\omega}_{k+1}$ on
$\widetilde{X}_k$,
\item[$(iii)$]
$\frac{\sqrt{-1}}{\pi}\partial\bar\partial
\widetilde{\phi}_{\varsigma,\rho}\geq-\frac{\gamma_3}{\pi}
-\varsigma \varpi_{k+1}-\delta_\rho\widetilde{\omega}_{k+1}$ on
$\widetilde{X}_k$,
\end{enumerate}
where $E_\varsigma(\phi\circ\mu):=\{x\in \widetilde{X}:\,
\nu(\phi\circ\mu,x)\geq \varsigma\}$ $(\varsigma>0)$ is the
$\varsigma$-upperlevel set of Lelong numbers of $\phi\circ\mu$, and
$\{\delta_\rho\}$ is an increasing family of positive numbers such
that $\lim\limits_{\rho\rightarrow 0}\delta_\rho=0$.

Since $\widetilde{\omega}_{k+1}$ is a K\"{a}hler metric on
$\widetilde{X}_{k+1}$ by Lemma \ref{l:Kahler metric under resolution} and
$\widetilde{X}_k$ is relatively compact in $\widetilde{X}_{k+1}$,
there exists a positive number $n_k>1$ such that
$n_k\widetilde{\omega}_{k+1}\geq\varpi_{k+1}$ holds on
$\widetilde{X}_k$. Take $\varsigma=\delta_\rho$ and denote
$\widetilde{\phi}_{\delta_\rho,\rho}$ simply by
$\widetilde{\phi}_\rho$. Then $\widetilde{\phi}_\rho$ is
quasi-plurisubharmonic on a neighborhood of the closure of
$\widetilde{X}_k$, smooth on $\widetilde{X}_{k}\setminus
E_{\delta_\rho}(\phi\circ\mu)$, increasing with respect to $\rho$ on
$\widetilde{X}_k$, and converges to $\phi\circ\mu$ on
$\widetilde{X}_k$ as $\rho\rightarrow0$. Furthermore,
\[\sqrt{-1}\partial\bar\partial\widetilde{\phi}_\rho
+\gamma_2+2\pi n_k\delta_\rho\widetilde{\omega}_{k+1} \geq0\]
and
\[\sqrt{-1}\partial\bar\partial\widetilde{\phi}_\rho
+\gamma_3+2\pi n_k\delta_\rho\widetilde{\omega}_{k+1} \geq0\] hold
on $\widetilde{X}_k$. Since $\mu:\widetilde{X}_k
\setminus\widetilde{\Sigma_0}\rightarrow X_k\setminus \Sigma_0$ is biholomorphic,
we get that
\[\sqrt{-1}\partial\bar\partial(\widetilde{\phi}_\rho
\circ\mu^{-1})+(\mu^{-1})^*\gamma_2+2\pi n_k\delta_\rho
(\mu^{-1})^*\widetilde{\omega}_{k+1}\geq 0\]
and
\[\sqrt{-1}\partial\bar\partial(\widetilde{\phi}_\rho
\circ\mu^{-1})+(\mu^{-1})^*\gamma_3+2\pi n_k\delta_\rho
(\mu^{-1})^*\widetilde{\omega}_{k+1}\geq 0\] hold on $X_k\setminus
\Sigma_0$. Then, replacing $\gamma_2$, $\gamma_3$ and
$\widetilde{\omega}_{k+1}$ with their definitions, we obtain that
\begin{equation}\label{ie:regularization of
curvature1}
\sqrt{-1}\partial\bar\partial
(\widetilde{\phi}_\rho\circ\mu^{-1}) +\sqrt{-1}\Theta_{L,h_0}+(1+2\pi
n_k\delta_\rho)\sqrt{-1}
\partial\bar\partial\psi\geq-2\pi n_k\widetilde{n}_k\delta_\rho\omega
\end{equation}
and
\begin{equation}\label{ie:regularization of
curvature2}
\sqrt{-1}\partial\bar\partial
(\widetilde{\phi}_\rho\circ\mu^{-1}) +\sqrt{-1}\Theta_{L,h_0}+\bigg(1+2\pi
n_k\delta_\rho+\frac{1}{\widetilde{\chi}(\alpha)}\bigg)\sqrt{-1}
\partial\bar\partial\psi\geq-2\pi
n_k\widetilde{n}_k \delta_\rho\omega
\end{equation}
hold on $X_k\setminus \Sigma_0$.

Since $E_{\delta_\rho}(\phi\circ\mu)$ is an analytic set in
$\widetilde{X}$, Remmert's proper mapping theorem implies that
\[\Sigma_\rho:=\mu\big(E_{\delta_\rho} (\phi\circ\mu)\big)\] is an
analytic set in $X$. By Lemma \ref{l:Demailly1982-complete-metric},
$X_k\setminus(\Sigma_0\cup \Sigma_\rho)$ is a complete K\"{a}hler manifold.

It follows from the properties of $\widetilde{\phi}_\rho$ that
$\widetilde{\phi}_\rho\circ\mu^{-1}$ is smooth on
$X_{k}\setminus(\Sigma_0\cup \Sigma_\rho)$, increasing with respect to
$\rho$ on $X_k\setminus \Sigma_0$, uniformly bounded above on $X_k\setminus
\Sigma_0$ with respect to $\rho$, and converges to $\phi$ on $X_k\setminus
\Sigma_0$ as $\rho\rightarrow0$.

In Step 3, we will use $\widetilde{\phi}_\rho\circ\mu^{-1}$ to
construct a smooth metric of $L$ on $X_k\setminus(\Sigma_0\cup
\Sigma_\rho)$.

\textbf{Step 3: construction of additional weights and twist factors.}

Let $\zeta$, $\chi$ and $\eta$ be the solution to the following system of ODEs
defined on $(-\infty,\alpha_0)$:
\begin{numcases}{}
\chi(t)\zeta'(t)-\chi'(t)=1,\label{5 ode1}\\
\big(\chi(t)+\eta(t)\big)e^{\zeta(t)}=\bigg(\frac{\alpha_1}{R(\alpha_0)}+C_R\bigg)R(t),\label{5 ode2}\\
\frac{\big(\chi'(t)\big)^2}{\chi(t)
\zeta''(t)-\chi''(t)}=\eta(t),\label{5 ode3}
\end{numcases}
where we assume that $\zeta$, $\chi$ and $\eta$ are all
smooth on $(-\infty,\alpha_0)$, and that $\inf\limits_{t<\alpha_0}\zeta(t)=0$, $\inf\limits_{t<\alpha_0}\chi(t)=\alpha_1$, $\eta>0$,
$\zeta'>0$ and $\chi'<0$ on
$(-\infty,\alpha_0)$. If $\alpha_0=+\infty$, we replace the assumption $\inf\limits_{t<\alpha_0}\chi(t)=\alpha_1$ by $\chi>0$. By the similar calculation as in \cite{Guan-Zhou2013b} or \cite{Zhou-Zhu2015}, we can solve the system of ODEs and the solution is
\begin{numcases}{}
\chi(t)=\widetilde{\chi}(t),\nonumber\\
\zeta(t)=\log\bigg(\frac{\alpha_1}{R(\alpha_0)}+C_R\bigg)-\log\bigg(\frac{\alpha_1}{R(\alpha_0)}+\int_t^{\alpha_0}\frac{dt_1}{R(t_1)}\bigg),\nonumber\\
\eta(t)=R(t)\bigg(\frac{\alpha_1}{R(\alpha_0)}+\int_t^{\alpha_0}\frac{dt_1}{R(t_1)}\bigg)-\widetilde{\chi}(t),\nonumber
\end{numcases}
where $\widetilde{\chi}(t)$ is defined by $(\ref{e:definition of chi})$.

Let $\epsilon\in(0,\frac{1}{2})$ be as in Step 1 and put $\sigma_{t}=
\log(e^\psi+e^t)-\epsilon$. Then there exists a negative number
$t_\epsilon$ such that
$\sigma_{t}\leq \alpha-\frac{\epsilon}{2}$ on $\overline
{X_k}$ for any $t\in(-\infty,t_\epsilon)$.

Let $h_{\rho,t}$ be the new metric on the line bundle $L$ over
$X_k\setminus(\Sigma_0\cup \Sigma_\rho)$ defined by
\[h_{\rho,t}:=
h_0e^{-\widetilde{\phi}_\rho\circ\mu^{-1} -(1+2\pi
n_k\delta_\rho)\psi-\zeta(\sigma_{t})}.\]

Let $\tau_t:=\chi(\sigma_{t})$ and
$A_t:=\eta(\sigma_{t})$. Set
$\mathrm{B}_{\rho,t} =[\Theta_{\rho,t},\,\Lambda
]$ on $X_k\setminus(\Sigma_0\cup \Sigma_\rho)$, where
\[\Theta_{\rho,t}:=\tau_t\sqrt{-1}
\Theta_{L,h_{\rho,t}}
-\sqrt{-1}\partial\bar\partial\tau_t
-\sqrt{-1}\frac{\partial\tau_t\wedge
\bar\partial\tau_t}{A_t}.\] Set $\nu_t
=\partial\sigma_t$. We want to prove
\begin{equation}\label{ie:twisted cuvature}
\Theta_{\rho,t}\big|_{X_k\setminus(\Sigma_0\cup \Sigma_\rho)}\geq
\frac{e^t}{ e^\psi }\sqrt{-1} \nu_t\wedge
\bar\nu_t-2\pi
n_k\widetilde{n}_k\chi(\sigma_{t})\delta_\rho\omega.
\end{equation}

It follows from $(\ref{5 ode1})$ and $(\ref{5 ode3})$ that
\begin{eqnarray*}
& &\Theta_{\rho,t}\big|_{X_k\setminus(\Sigma_0\cup \Sigma_\rho)}\\
&=&\chi(\sigma_{t})\big(\sqrt{-1}\Theta_{L,h_0}
+\sqrt{-1}\partial\bar\partial
(\widetilde{\phi}_\rho\circ\mu^{-1})+(1+2\pi
n_k\delta_\rho)\sqrt{-1}\partial\bar\partial\psi\big)\\
& &+\big(\chi(\sigma_{t})
\zeta'(\sigma_{t})
-\chi'(\sigma_{t})\big)
\sqrt{-1}\partial\bar\partial\sigma_{t}\\
& &+\bigg(\chi(\sigma_{t})
\zeta''(\sigma_{t})-\chi''(\sigma_{t})
-\frac{\big(\chi'(\sigma_{t})\big)^2}
{\eta(\sigma_{t})}\bigg)\sqrt{-1}
\partial\sigma_{t}
\wedge\bar\partial\sigma_{t}\\
&=&\chi(\sigma_{t})\big(\sqrt{-1}\Theta_{L,h_0}
+\sqrt{-1}\partial\bar\partial
(\widetilde{\phi}_\rho\circ\mu^{-1})+(1+2\pi
n_k\delta_\rho)\sqrt{-1}\partial\bar\partial\psi\big)+\sqrt{-1}\partial\bar\partial
\sigma_{t}\\
&=&\chi(\sigma_{t})\big(\sqrt{-1}\Theta_{L,h_0}
+\sqrt{-1}\partial\bar\partial
(\widetilde{\phi}_\rho\circ\mu^{-1})+(1+2\pi n_k\delta_\rho)\sqrt{-1}\partial\bar\partial\psi\big)\\
& &+\frac{e^t}{ e^\psi }\sqrt{-1} \nu_t\wedge
\bar\nu_t+\frac{e^\psi}{e^\psi+e^t}\sqrt{-1}\partial\bar\partial\psi.
\end{eqnarray*}
Since $\chi$ is decreasing and $\chi=\widetilde{\chi}$, it follows from $(\ref{ie:regularization
of curvature1})$ and $(\ref{ie:regularization of curvature2})$ that
\begin{eqnarray*}
& &\chi(\sigma_{t})\big(\sqrt{-1}\Theta_{L,h_0}
+\sqrt{-1}\partial\bar\partial
(\widetilde{\phi}_\rho\circ\mu^{-1})+(1+2\pi n_k\delta_\rho)\sqrt{-1}\partial\bar\partial\psi\big)\\
& &+\frac{e^\psi}{e^\psi+e^t}\sqrt{-1}\partial\bar\partial\psi\\
&=&\chi(\sigma_{t})\big(\sqrt{-1}\Theta_{L,h_0}
+\sqrt{-1}\partial\bar\partial
(\widetilde{\phi}_\rho\circ\mu^{-1})+(1+2\pi n_k\delta_\rho)\sqrt{-1}\partial\bar\partial\psi+2\pi n_k\widetilde{n}_k\delta_\rho\omega\big)\\
& & -2\pi
n_k\widetilde{n}_k\chi(\sigma_{t})\delta_\rho\omega
+\frac{\chi(\alpha) e^\psi}{e^\psi+e^t}\cdot\frac{\sqrt{-1}\partial\bar\partial\psi}{\chi(\alpha)}\\
&\geq&\frac{\chi(\alpha) e^\psi }{ e^\psi + e^t}
\bigg(\sqrt{-1}\Theta_{L,h_0}
+\sqrt{-1}\partial\bar\partial
(\widetilde{\phi}_\rho\circ\mu^{-1})+(1+2\pi n_k\delta_\rho)\sqrt{-1}\partial\bar\partial\psi\\
& &+2\pi
n_k\widetilde{n}_k\delta_\rho\omega+\frac{\sqrt{-1}\partial\bar\partial\psi}{\chi(\alpha)}\bigg)
-2\pi n_k\widetilde{n}_k\chi(\sigma_{t})\delta_\rho\omega\\
&\geq&-2\pi
n_k\widetilde{n}_k\chi(\sigma_{t})\delta_\rho\omega
\end{eqnarray*}
on $X_k\setminus(\Sigma_0\cup\Sigma_\rho)$. Hence we get $(\ref{ie:twisted
cuvature})$ as desired.

Let $\beta$ be as in Step 1. Let
$\beta_0$ and $\beta_3$ be two positive numbers which will be determined later in Step 4. We choose an increasing family of
positive numbers
$\{\rho_t\}_{t\in(-\infty,t_\epsilon)}$ such that
$\lim\limits_{ t\rightarrow-\infty }\rho_t=0$ and for any $t$,
\begin{equation}\label{ie:delta1}
2\pi n_k\widetilde{n}_k\chi(t-1)\delta_{\rho_t}<e^{\beta_0t},
\end{equation}
\begin{equation}\label{ie:delta2}
2\pi n_k\delta_{\rho_t}<\beta_3,
\end{equation}
and
\begin{equation}\label{ie:delta3}
\bigg(\frac{\epsilon}{2-\epsilon}e^t\bigg)^{2\pi
n_k\delta_{\rho_t}}>\frac{1}{1+\epsilon}.
\end{equation}

Since $\sigma_{t}\geq t-1$ on
$X_k$ and $\chi$ is decreasing, we have
$\chi(\sigma_{t})\leq \chi(t-1)$
on $X_k$. Then it follows from $(\ref{ie:twisted cuvature})$ and
$(\ref{ie:delta1})$ that
\[\Theta_{\rho_t,t}
\big|_{X_k\setminus(\Sigma_0\cup \Sigma_{\rho_t})}\geq
\frac{e^t}{ e^\psi }\sqrt{-1} \nu_t\wedge
\bar\nu_t-e^{\beta_0t}\omega.\] Hence
\begin{equation}\label{ie:B-estimate}
\mathrm{B}_{\rho_t,t}
+e^{\beta_0t}\mathrm{I}
\geq\bigg[\frac{e^t}
{ e^\psi }\sqrt{-1}\nu_t\wedge
\bar\nu_t,\,\Lambda \bigg]=\frac{e^t}
{ e^\psi }\mathrm{T}_{\bar\nu_t}
\mathrm{T}_{\bar\nu_t}^*\geq0
\end{equation}
on $X_k\setminus(\Sigma_0\cup \Sigma_{\rho_t})$ as an operator on
$(n,1)$-forms, where $\mathrm{T}_{\bar\nu_t}$ denotes the
operator $\bar\nu_t\wedge\bullet$ and
$\mathrm{T}_{\bar\nu_t}^*$ is its Hilbert adjoint
operator.

\textbf{Step 4: construction of suitably truncated forms and solving
$\bar\partial$ globally with $L^2$ estimates.}

In this step and Step 5, we will denote $\mathrm{B}_{\rho_t,t}$ and $h_{\rho_t,t}$ simply by $\mathrm{B}_t$ and $h_t$ respectively.

Let $\epsilon\in(0,\frac{1}{2})$ be as in Step 1. It is easy to construct a smooth function
$\theta:\mathbb{R}\longrightarrow[0,1]$ such that $\theta=0$ on
$(-\infty,\,\frac{\epsilon}{2}]$, $\theta=1$ on $[1-\frac{\epsilon}{2},\,+\infty)$
and $|\theta'|\leq \frac{1+\epsilon}{1-\epsilon}$ on $\mathbb{R}$.

Define $g_t=\mathrm{D}''
\big(\theta(\frac{ e^t}{ e^\psi + e^t})
\tilde{f}_t\big)$, where $\tilde{f}_t$ is
constructed in Step 1. Then $\mathrm{D}''g_t=0$ and
\begin{eqnarray*}
g_t&=&-\theta'\big(\frac{ e^t}
{ e^\psi + e^t}\big) \frac{ e^{\psi+t}}
{( e^\psi + e^t)^2}\bar\partial\psi\wedge
\tilde{f}_t+\theta\big(\frac{ e^t}
{ e^\psi + e^t}\big)\mathrm{D}''\tilde{f}_t\\
&=&g_{1,t}+g_{2,t},
\end{eqnarray*}
where $g_{1,t}$ denotes $-\bar\nu_t
\wedge\theta'(\frac{ e^t} { e^\psi + e^t})
\frac{ e^t} { e^\psi + e^t}\tilde{f}_t$ and
$g_{2,t}$ denotes $\theta(\frac{ e^t}
{ e^\psi + e^t})\mathrm{D}''\tilde{f}_t$. Then
\[\mathrm{supp}\,g_{1,t}\subset\{t-c_1<\psi<t+c_2\}\]
and
\[\mathrm{supp}\,g_{2,t}\subset\{\psi<t+c_2\},\]
where $c_1$ and $c_2$ are defined as in Step 1.

It follows from $(\ref{ie:inner product})$ and $(\ref{ie:B-estimate})$ that
\begin{eqnarray}
& &\langle(\mathrm{B}_t+2e^{\beta_0t} \mathrm{I})^{-1}
g_t,g_t
\rangle_{\omega,h_t}\big|_{X_k\setminus(\Sigma_0\cup \Sigma_{\rho_t})}\label{ie:estimate with delta}\\
&\leq&(1+\epsilon) \langle(\mathrm{B}_t+2e^{\beta_0t}
\mathrm{I})^{-1} g_{1,t},g_{1,t}
\rangle_{\omega,h_t} +\frac{1+\epsilon}{\epsilon}
\langle(\mathrm{B}_t+2e^{\beta_0t}
\mathrm{I})^{-1} g_{2,t},g_{2,t}
\rangle_{\omega,h_t}\nonumber\\
&\leq&(1+\epsilon) \langle(\mathrm{B}_t+e^{\beta_0t}
\mathrm{I})^{-1} g_{1,t},g_{1,t}
\rangle_{\omega,h_t} +\frac{1+\epsilon}{\epsilon}
\langle\frac{1}{e^{\beta_0t}}
g_{2,t},g_{2,t}
\rangle_{\omega,h_t}.\nonumber
\end{eqnarray}

By $(\ref{ie:B-estimate})$, we have
\[
\langle(\mathrm{B}_t+e^{\beta_0t} \mathrm{I})^{-1}
g_{1,t},g_{1,t}
\rangle_{\omega,h_t}\big|_{X_k\setminus(\Sigma_0\cup \Sigma_{\rho_t})}\leq\frac{ e^\psi }{e^t}\bigg|
\theta'\big(\frac{ e^t} { e^\psi + e^t}\big)
\frac{ e^t}
{ e^\psi + e^t}\tilde{f}_t\bigg|^2_{\omega,h_t}.
\]
Then $\zeta>0$ implies that
\begin{eqnarray*}
I_{1,t}
&:=&\int_{X_k\setminus(\Sigma_0\cup \Sigma_{\rho_t})}\langle
(\mathrm{B}_t+e^{\beta_0t} \mathrm{I})^{-1}
g_{1,t},g_{1,t}
\rangle_{\omega,h_t}dV_{X,\omega}\\
&\leq& \frac{(1+\epsilon)^2}{(1-\epsilon)^2}
\int_{X_k\cap\{t-c_1<\psi<t+c_2
\}}
\frac{ e^t|\tilde{f}_t|^2_{\omega,h_0}
e^{-\widetilde{\phi}_{\rho_t}\circ\mu^{-1} }
}{( e^\psi + e^t)^2e^{2\pi n_k\delta_{\rho_t}\psi}}dV_{X,\omega}.
\end{eqnarray*}

Since $\widetilde{\phi}_{\rho_t}\circ\mu^{-1} \geq\phi$ on
$X_k\setminus\Sigma_0$, it follows from $(\ref{ie:delta3})$ that
\begin{eqnarray*}
I_{1,t}
&\leq& \frac{(1+\epsilon)^2}{(1-\epsilon)^2}
\int_{X_k\cap\{t-c_1<\psi<t+c_2\}}
\frac{ e^t|\tilde{f}_t|^2_{\omega,h_0} e^{-\phi} dV_{X,\omega}}
{( e^\psi + e^t)^2
\big(\frac{\epsilon}{2-\epsilon}e^t\big)^{2\pi n_k\delta_{\rho_t}}}\\
&\leq&\frac{(1+\epsilon)^3}{(1-\epsilon)^2}
\int_{X_k\cap\{t-c_1<\psi<t+c_2\}}
\frac{ e^t|\tilde{f}_t|^2_{\omega,h_0}e^{-\phi}
dV_{X,\omega}}{( e^\psi + e^t)^2}.
\end{eqnarray*}
Since
\begin{equation*}
|\tilde{f}_t|^2_{\omega,h_0}\big|_{U}
=|\sum\limits_{i=1}^N\sqrt{\xi_i}\cdot\sqrt{\xi_i}\tilde{f}_{i,t}|^2_{\omega,h_0}
\leq(\sum\limits_{i=1}^N\xi_i)(\sum\limits_{i=1}^N\xi_i|\tilde{f}_{i,t}|^2_{\omega,h_0})
=\sum\limits_{i=1}^N\xi_i|\tilde{f}_{i,t}|^2_{\omega,h_0}
\end{equation*}
by the Cauchy-Schwarz inequality, we have
\begin{equation*}
I_{1,t}\leq\frac{ (1+\epsilon)^3}{(1-\epsilon)^2}\sum\limits_{i=1}^N
\int_{X_k\cap\{t-c_1<\psi<t+c_2\}}
\frac{ e^t\xi_i|\tilde{f}_{i,t}|^2_{\omega,h_0}
e^{-\phi}dV_{X,\omega}}{( e^\psi + e^t)^2}.
\end{equation*}
Then it follows from
$(\ref{ie:family-integral-limit})$ that
\begin{eqnarray*}
\varlimsup\limits_{ t\rightarrow-\infty } I_{1,t}
&\leq&\sum_{i=1}^N\varlimsup\limits_{ t\rightarrow-\infty }\bigg(\frac{ (1+\epsilon)^3}{(1-\epsilon)^2}
\int_{X_k\cap\{t-c_1<\psi<t+c_2\}}
\frac{ e^t\xi_i|\tilde{f}_{i,t}|^2_{\omega,h_0}
e^{-\phi}dV_{X,\omega}}{( e^\psi + e^t)^2}\bigg)\\
&\leq&\sum_{i=1}^N\frac{(1+\epsilon)^3}{(1-\epsilon)^2}\int_{U_i\cap Y^0}\xi_i|f|^2_{\omega,h_0}e^{-\phi}dV_{X,\omega}[\psi]\\
&\leq&\frac{(1+\epsilon)^3}{(1-\epsilon)^2}\int_{Y^0}|f|^2_{\omega,h_0}e^{-\phi}dV_{X,\omega}[\psi].
\end{eqnarray*}
Then
\begin{equation}\label{ie:I1}
I_{1,t}\leq \frac{(1+\epsilon)^4}{(1-\epsilon)^2}
\int_{Y^0}|f|^2_{\omega,h_0}e^{-\phi}dV_{X,\omega}[\psi]
\end{equation}
when $t$ is small enough.

Since $\zeta(\sigma_t)>0$ and
$\widetilde{\phi}_{\rho_t}\circ\mu^{-1} \geq\phi$ on
$X_k\setminus\Sigma_0$, by $(\ref{ie:delta2})$, we have
\begin{eqnarray*}
I_{2,t}&:=&\int_{X_k\setminus(\Sigma_0\cup \Sigma_{\rho_t})}\langle\frac{1}
{e^{\beta_0t}}
g_{2,t},g_{2,t}
\rangle_{\omega,h_t}dV_{X,\omega} \\
&\leq&\frac{1}{e^{\beta_0t}}\int_{X_k\cap\{\psi<t+c_2\}}
\frac{|\mathrm{D}''\tilde{f}_t|^2_{\omega,h_0}
e^{-\widetilde{\phi}_{\rho_t}\circ\mu^{-1}} }{e^{(1+2\pi n_k\delta_{\rho_t})\psi}} dV_{X,\omega}
 \\
&\leq&\frac{1}{e^{\beta_0t}} \int_{X_k\cap\{\psi<t+c_2\}}
\frac{|\mathrm{D}''\tilde{f}_t|^2_{\omega,h_0} e^{-\phi}}
{e^{(1+\beta_3)\psi}} dV_{X,\omega}.
\end{eqnarray*}
Then it follows from $(\ref{e:dbar of smooth extension})$ and the
Cauchy-Schwarz inequality that
$I_{2,t}$ is bounded by the sum of the terms
\[\frac{C_8}{e^{\beta_0t}}
\int_{U_i\cap U_j\cap\{\psi<t+c_2\}}
\frac{|\tilde{f}_{i,t}-\tilde{f}_{j,t}|^2_{\omega,h_0}e^{-\phi}}
{e^{(1+\beta_3)\psi}}dV_{X,\omega} \quad(1\leq i,j\leq N),\]
where $C_8$ is some positive number independent of $t$.

By the definition of $R_1$ (see Part II in Step 1), $(\ref{ie:special-extension--1})$ implies that for $i=1,\cdots,N$,
\begin{equation}\label{ie:special-extension-step4-1}
\int_{\Omega_i\cap \{\psi<t+c_2\}}
\frac{|\tilde{f}_{i,t}|^2_{\omega,h_0}e^{-(1+\beta)
\phi}}{e^\psi R_0(\psi)}dV_{X,\omega}\leq C_9
\end{equation}
for some positive number $C_9$ independent of
$t$ when $t$ is small enough. Then by the H\"{o}lder inequality, we get
\begin{eqnarray*}
& &\int_{U_i\cap U_j\cap\{\psi<t+c_2\}}
\frac{|\tilde{f}_{i,t}-\tilde{f}_{j,t}|^2_{\omega,h_0}e^{-\phi}}
{e^{(1+\beta_3)\psi}}dV_{X,\omega}\\
&\leq&\bigg(\int_{U_i\cap
U_j\cap\{\psi<t+c_2\}}
\frac{|\tilde{f}_{i,t}-\tilde{f}_{j,t}|^2_{\omega,h_0}e^{-(1+\beta)
\phi}}{e^\psi R_0(\psi)}dV_{X,\omega}\bigg)^{
\frac{1}{1+\beta}}\\
& &\times\bigg(\int_{U_i\cap
U_j\cap\{\psi<t+c_2\}}
\frac{|\tilde{f}_{i,t}-\tilde{f}_{j,t}|^2_{\omega,h_0}\big(
R_0(\psi)\big)^{\frac{1}{\beta}}}
{e^{(1+\beta_3\cdot\frac{1+\beta}{\beta})\psi}}
dV_{X,\omega}\bigg)^{\frac{\beta}{1+\beta}}\\
&\leq&C_{10}\bigg(\int_{U_i\cap
U_j\cap\{\psi<t+c_2\}}
\frac{|\tilde{f}_{i,t}-\tilde{f}_{j,t}|^2_{\omega,h_0}}
{e^{(1+\beta_3\cdot\frac{1+\beta}{\beta}+\beta_2\cdot\frac{1}{\beta})\psi}}
dV_{X,\omega}\bigg)^{\frac{\beta}{1+\beta}}
\end{eqnarray*}
when $t$ is small enough, where $C_{10}$ is a positive number
independent of $t$.

We will estimate the last integral above by estimating its pull-back under $\mu$. We cover $\mu^{-1}(U_i\cap U_j)\cap\{\psi\circ\mu<t+c_2\}$ by a finite number of coordinate balls such as $W$ in Step 1 in the proof of Proposition \ref{p:convergence-integral-psh}. It follows from $(\ref{ie:estimate of smooth extension1})$ and $(\ref{ie:estimate of smooth extension2})$ that for each $W$,
\begin{equation*}
\int_{W_{i,j,t}}
\frac{|\tilde{f}_{i,t}\circ\mu-\tilde{f}_{j,t}\circ\mu|^2_{\omega,h_0}|J_\mu|^2}
{e^{(1+\beta_3\cdot\frac{1+\beta}{\beta}+\beta_2\cdot\frac{1}{\beta})\psi\circ\mu}}
d\lambda(w)\leq C_{11}\int_{W_{i,j,t}}
\frac{1}
{\prod\limits_{p=1}^n|w_p|^{2\beta_{5,p}}}
d\lambda(w),
\end{equation*}
where
\[\beta_{5,p}:=\beta_4ca_p+(ca_p-b_p)-\lfloor ca_p-b_p\rfloor_+,\]
\[\beta_4:=\beta_3\cdot\frac{1+\beta}{\beta}+\beta_2\cdot\frac{1}{\beta}+\beta_1,\]
and $C_{11}$ is a positive number independent of $t$.

Since
\[\big(W\cap\{\psi\circ\mu<t+c_2\}\big)\subset \overset{n}{\underset{p=1}\cup}\big(\big\{|w_p|<e^{\frac{t+c_2-m}{2c|a|}}\big\}\cap W\big),\]
where $m:=\inf\limits_W\widetilde{u}(w)$, we obtain
\begin{eqnarray*}
\int_{W_{i,j,t}}
\frac{1}
{\prod\limits_{p=1}^n|w_p|^{2\beta_{5,p}}}
d\lambda(w)
&\leq&\sum\limits_{p=1}^n\int_{\big\{|w_p|<e^{\frac{t+c_2-m}{2c|a|}}\big\}\cap W}
\frac{1}
{\prod\limits_{p=1}^n|w_p|^{2\beta_{5,p}}}
d\lambda(w)\\
&\leq&C_{12}\sum\limits_{p=1}^ne^{\frac{1-\beta_{5,p}}{c|a|}t}
\end{eqnarray*}
when $\max\limits_{1\leq p\leq n}\beta_{5,p}<1$, where $C_{12}$ is a positive number independent of $t$.

Let $\beta_1$ be a positive number such that
\begin{equation}\label{ie:beta1-estimate}
\beta_1<\min\limits_{\{p:\,a_p\neq0\}}\frac{1-(ca_p-b_p)+\lfloor ca_p-b_p\rfloor_+}{3ca_p}.
\end{equation}

Take $\beta_2=\beta_1\beta$, $\beta_3=\frac{\beta_1\beta}{1+\beta}$. Then $\beta_4=3\beta_1$ and $\max\limits_{1\leq p\leq n}\beta_{5,p}<1$.

Let $\beta_0$ be a positive number such that
\[\beta_0<\min\limits_{1\leq p\leq n}\frac{\beta(1-\beta_{5,p})}{2(1+\beta)c|a|}\]
for every $W$. Then we have
\begin{equation}\label{ie:I2}
I_{2,t}\leq C_{13}\cdot e^{\beta_0t},
\end{equation}
where $C_{13}$ is a positive number independent of $t$.

Therefore, it follows from $(\ref{ie:estimate with delta})$, $(\ref{ie:I1})$ and $(\ref{ie:I2})$ that
\begin{equation*}
\int_{X_k\setminus(\Sigma_0\cup \Sigma_{\rho_t})}\langle(\mathrm{B}_t
+2e^{\beta_0t} \mathrm{I})^{-1}
g_t,g_t
\rangle_{\omega,h_t}dV_{X,\omega}\leq(1+\epsilon) I_{1,t}
+\frac{1+\epsilon}{\epsilon} I_{2,t}\leq C(t),
\end{equation*}
where
\[C(t):=\frac{(1+\epsilon)^5}{(1-\epsilon)^2}
\int_{Y^0}|f|^2_{\omega,h_0}e^{-\phi}dV_{X,\omega}[\psi]
+\frac{1+\epsilon}{\epsilon}C_{13}\cdot e^{\beta_0t}.\]

Then by Lemma \ref{l:Demailly-non complete metric}, there exists
$u_{k,\epsilon,t}\in L^2(X_k\setminus(\Sigma_0\cup \Sigma_{\rho_t}),\,K_X\otimes L,\,h_t)$ and
$v_{k,\epsilon,t}\in L^2(X_k\setminus(\Sigma_0\cup \Sigma_{\rho_t}),\,\wedge^{n,1}T_X^*\otimes L,\,h_t)$ such that
\begin{equation}\label{e:dbar}
\mathrm{D}''u_{k,\epsilon,t}
+\sqrt{2e^{\beta_0t}}v_{k,\epsilon,t} =g_t
\end{equation}
on $X_k\setminus(\Sigma_0\cup \Sigma_{\rho_t})$ and
\begin{eqnarray}
& &\int_{X_k\setminus(\Sigma_0\cup \Sigma_{\rho_t})}\frac{|u_{k,\epsilon,t}|^2
_{\omega,h_0}e^{-\widetilde{\phi}_{\rho_t} \circ\mu^{-1}-(1+2\pi
n_k\delta_{\rho_t}) \psi-\zeta(\sigma_t)}}
{\tau_t+A_t}dV_{X,\omega}\label{ie:estimate1}\\
& &+\int_{X_k\setminus(\Sigma_0\cup \Sigma_{\rho_t})}|v_{k,\epsilon,t}|^2_{\omega,h_0}
e^{-\widetilde{\phi}_{\rho_t} \circ\mu^{-1}-(1+2\pi
n_k\delta_{\rho_t})
\psi-\zeta(\sigma_t)}dV_{X,\omega}\nonumber\\
&\leq&C(t).\nonumber
\end{eqnarray}

Since $\{\widetilde{\phi}_{\rho_t}\circ\mu^{-1}\}$ are uniformly bounded above on
$X_k\setminus\Sigma_0$ with respect to $t$ as obtained in Step
2, we have
\begin{equation}\label{ie:varphi estimate}
e^{-\widetilde{\phi}_{\rho_t}\circ\mu^{-1}} \geq C_{14}
\end{equation}
on $X_k\setminus\Sigma_0$ for any $t$, where
$C_{14}$ is a positive number independent of $t$. Since
$t-\epsilon\leq\sigma_t\leq \alpha-\frac{\epsilon}{2}$ on
$\overline{X_k}$ and $\psi$ is upper semicontinuous on $X$, we
have that $\psi$, $\zeta(\sigma_t)$ and
$\tau_t+A_t$ are all bounded above on
$\overline{X_k}$ for each fixed $t$. Then it follows from
$(\ref{ie:estimate1})$ that $u_{k,\epsilon,t}\in
L^2$ and $v_{k,\epsilon,t}\in L^2$. Hence it follows from
$(\ref{e:dbar})$ and Lemma \ref{l:extension} that
\begin{equation}\label{e:dbar on X_k}
\mathrm{D}''u_{k,\epsilon,t}
+\sqrt{2e^{\beta_0t}}v_{k,\epsilon,t} =\mathrm{D}''
\bigg(\theta\big(\frac{ e^t} { e^\psi + e^t}\big)
\tilde{f}_t\bigg)
\end{equation}
holds on $X_k$. Furthermore, $(\ref{ie:estimate1})$ and $(\ref{5
ode2})$ imply that
\begin{eqnarray}
& &\int_{X_k}\frac{|u_{k,\epsilon,t}|^2
_{\omega,h_0}e^{-\widetilde{\phi}_{\rho_t} \circ\mu^{-1}}}
{\big(\frac{\alpha_1}{R(\alpha_0)}+C_R\big)e^\psi R(\sigma_t)}dV_{X,\omega}+\int_{X_k}|v_{k,\epsilon,t}|^2_{\omega,h_0}
e^{-\widetilde{\phi}_{\rho_t} \circ\mu^{-1}-
\psi-\zeta(\sigma_t)}dV_{X,\omega}\label{ie:estimate on X_k}\\
&\leq&e^{2\pi n_k\delta_{\rho_t} M_\psi}C(t),\nonumber
\end{eqnarray}
where $ M_\psi :=\sup\limits_{X_k}\psi$.

Define $F_{k,\epsilon,t}=-u_{k,\epsilon,t}
+\theta(\frac{ e^t}{ e^\psi + e^t})
\tilde{f}_t$. Then $(\ref{e:dbar on X_k})$ implies that
$\mathrm{D}''F_{k,\epsilon,t}
=\sqrt{2e^{\beta_0t}}v_{k,\epsilon,t}$ on $X_k$.
Since
$\widetilde{\phi}_{\rho_t}\circ\mu^{-1}\geq \phi$ on
$X_k\setminus\Sigma_0$, it follows from $(\ref{ie:inner product})$ and
$(\ref{ie:estimate on X_k})$ that
\begin{eqnarray}
& &\int_{X_k}\frac{|F_{k,\epsilon,t}|^2_{\omega,h_0}
e^{-\widetilde{\phi}_{\rho_t}\circ\mu^{-1}}}
{e^\psi \max\{R( \psi-\epsilon),R(\sigma_t)\}}dV_{X,\omega}\label{ie:estimate F0}\\
&\leq&(1+\epsilon)\int_{X_k}\frac{|u_{k,\epsilon,t}|^2_{\omega,h_0}
e^{-\widetilde{\phi}_{\rho_t}\circ\mu^{-1}}}{e^\psi
R(\sigma_t)}
dV_{X,\omega}\nonumber\\
& &+\frac{1+\epsilon}{\epsilon}
\int_{X_k}\frac{\big|\theta\big(\frac{ e^t}
{ e^\psi + e^t}\big) \tilde{f}_t\big|^2_{\omega,h_0}
e^{-\widetilde{\phi}_{\rho_t}\circ\mu^{-1}}}{e^\psi
R( \psi-\epsilon)}
dV_{X,\omega}\nonumber\\
&\leq&(1+\epsilon)e^{2\pi
n_k\delta_{\rho_t} M_\psi }\bigg(\frac{\alpha_1}{R(\alpha_0)}+C_R\bigg)C(t)
+\widetilde{C}(t)\nonumber
\end{eqnarray}
when $t$ is small enough, where
\[\widetilde{C}(t):=\frac{1+\epsilon}{\epsilon} \int_{X_k\cap\{\psi<t+c_2\}}
\frac{|\tilde{f}_t|^2_{\omega,h_0} e^{-\phi}}{e^\psi
R( \psi-\epsilon)}dV_{X,\omega}.\]

Now we want to prove
\begin{equation}\label{e:limit-second-C(t)}
\lim\limits_{ t\rightarrow-\infty }
\widetilde{C}(t)=0.
\end{equation}

As in $(\ref{ie:special-extension-step4-1})$, we can obtain from $(\ref{ie:special-extension--1})$ that for $i=1,\cdots,N$,
\begin{equation*}
\int_{\Omega_i\cap \{\psi<t+c_2\}}
\frac{|\tilde{f}_{i,t}|^2_{\omega,h_0}e^{-(1+\beta)
\phi}}{e^\psi R(\psi-\epsilon )}dV_{X,\omega}\leq
C_{15}
\end{equation*}
for some positive number $C_{15}$ independent of
$t$ when $t$ is small enough. Then by the H\"{o}lder
inequality, we have that
\begin{eqnarray*}
& &\int_{U_i\cap X_k\cap\{\psi<t+c_2\}}
\frac{|\tilde{f}_{i,t}|^2_{\omega,h_0} e^{-\phi}}{e^\psi
R( \psi-\epsilon)}dV_{X,\omega}\\
&\leq&\bigg(\int_{U_i\cap \{\psi<t+c_2\}}
\frac{|\tilde{f}_{i,t}|^2_{\omega,h_0}e^{-(1+\beta)
\phi}}{e^\psi R(\psi-\epsilon )}dV_{X,\omega}\bigg)^{\frac{1}{1+\beta}}\\
& &\times\bigg(\int_{U_i\cap
\{\psi<t+c_2\}}
\frac{|\tilde{f}_{i,t}|^2_{\omega,h_0}}
{e^\psi R(\psi-\epsilon )}dV_{X,\omega}\bigg)^{\frac{\beta}{1+\beta}}\\
&\leq&C_{15}^{\frac{1}{1+\beta}}\bigg(\int_{U_i\cap
\{\psi<t+c_2\}}
\frac{|\tilde{f}_{i,t}|^2_{\omega,h_0}}
{e^\psi R(\psi-\epsilon )}dV_{X,\omega}\bigg)^{\frac{\beta}{1+\beta}}
\end{eqnarray*}
when $t$ is small enough.

We cover $\mu^{-1}(U_i)\cap\{\psi\circ\mu<t+c_2\}$ by a finite number of coordinate balls such as $W$ in Step 1 in the proof of Proposition \ref{p:convergence-integral-psh}. Then, in order to prove $\lim\limits_{ t\rightarrow-\infty }
\widetilde{C}(t)=0$, it suffices to prove
\[\lim\limits_{t\rightarrow-\infty}\int_{W_{i,t}}
\frac{|\tilde{f}_{i,t}\circ\mu|^2_{\omega,h_0}|J_\mu|^2}
{e^{\psi\circ\mu} R(\psi\circ\mu-\epsilon )}d\lambda(w)=0,\]
where
\[W_{i,t}:=W\cap\mu^{-1}(U_i)\cap
\{\psi\circ\mu<t+c_2\}.\]
Then by $(\ref{ie:case1-ft})$, $(\ref{ie:f-estimate})$, $(\ref{ie:case2-ft-1})$ and $(\ref{ie:case2-ft-2})$, it suffices to prove
\begin{equation}\label{e:lim-small-term1}
\lim\limits_{t\rightarrow-\infty}\int_{W_{i,t}}\frac{d\lambda(w)}{R(\psi\circ\mu-\epsilon )|w_{p_0}|^2\prod\limits_{1\leq p\leq n,p\neq p_0}|w_p|^{2(ca_p-b_p)-2\lfloor ca_p-b_p\rfloor_+}}=0
\end{equation}
in Case $(A)$ and
\begin{equation}\label{e:lim-small-term2}
\lim\limits_{t\rightarrow-\infty}\int_{W_{i,t}}\frac{d\lambda(w)}{R(\psi\circ\mu-\epsilon )\prod\limits_{p=1}^n|w_p|^{2\beta_1ca_p+2(ca_p-b_p)-2\lfloor ca_p-b_p\rfloor_+}}=0
\end{equation}
in Case $(A)$ and Case $(B)$.

Applying Fubini's theorem with respect to $(w',w_{p_0})$ and then using change of variables, we can obtain that
\begin{eqnarray*}
& &\lim\limits_{t\rightarrow-\infty}\int_{W_{i,t}}\frac{d\lambda(w)}{R(\psi\circ\mu-\epsilon )|w_{p_0}|^2\prod\limits_{1\leq p\leq n,p\neq p_0}|w_p|^{2(ca_p-b_p)-2\lfloor ca_p-b_p\rfloor_+}}\\
&\leq&C_{16}\lim\limits_{t\rightarrow-\infty}\int_{-\infty}^{t+c_2-m}\frac{ds}{R(s+M-\epsilon)}\\
&=&0,
\end{eqnarray*}
where $M:=\sup\limits_{W}\widetilde{u}(w)$, $m:=\inf\limits_{W}\widetilde{u}(w)$ and $C_{16}$ is a positive number independent of $t$. Hence we get $(\ref{e:lim-small-term1})$.

Similarly, it is easy to see that $(\ref{ie:beta1-estimate})$ implies that $(\ref{e:lim-small-term2})$.

Therefore, we obtain $(\ref{e:limit-second-C(t)})$.

Let $\widehat{\alpha}_k:=\sup\limits_{X_k}\alpha$. Then
\[e^\psi\max\{R(\psi-\epsilon),R(\sigma_t)\}\leq  e^{\epsilon}
\sup\limits_{t\leq\widehat{\alpha}_k}\big(e^tR(t)\big).\]
Hence it follows from
$(\ref{ie:varphi estimate})$ and $(\ref{ie:estimate F0})$ that
\begin{equation}\label{ie:F-no-weight-L2}
\int_{X_k}|F_{k,\epsilon,t}|^2_{\omega,h_0}dV_{X,\omega}\leq  C_{17}
\end{equation}
for some positive number $ C_{17}$ independent of
$t$ when $t$ is small enough.

Since the positive continuous function $R$ is decreasing near $-\infty$, it is easy to see that $\max\{R(\psi-\epsilon),R(\sigma_t)\}$ is equal to $R(\psi-\epsilon)$ near $\{\psi=-\infty\}$ and converges uniformly to $R(\psi-\epsilon)$ on $\overline{X_k}$ as $t\rightarrow-\infty$.

Since $\widetilde{\phi}_{\rho_t}\circ\mu^{-1}$ is
increasing with respect to $t$ and converges to $\phi$ on
$X_k\setminus\Sigma_0$ as $ t\rightarrow-\infty $, by extracting weak
limits of $\{F_{k,\epsilon,t}\}$ as
$ t\rightarrow-\infty $, we get from $(\ref{ie:F-no-weight-L2})$
and $(\ref{ie:estimate F0})$ a sequence
$\{t_j\}_{j=1}^{+\infty}$ and $F_{k,\epsilon}\in
L^2$ such that
$\lim\limits_{j\rightarrow+\infty}t_j=-\infty$,
$F_{k,\epsilon,t_j}\rightharpoonup F_{k,\epsilon}$ weakly in
$L^2$ as $j\rightarrow+\infty$ and
\begin{equation}\label{ie:estimate F1}
\int_{X_k}\frac{|F_{k,\epsilon}|^2_{\omega,h_0}e^{-\phi}}{e^\psi
R( \psi-\epsilon)}dV_{X,\omega} \leq
\frac{(1+\epsilon)^6}{(1-\epsilon)^2}\bigg(\frac{\alpha_1}{R(\alpha_0)}+C_R\bigg)
\int_{Y^0}|f|^2_{\omega,h_0}e^{-\phi}dV_{X,\omega}[\psi].
\end{equation}

Since $\sigma_t\leq\alpha-\frac{\epsilon}{2}$ on $X_k$, $\widehat{\alpha}_k:=\sup\limits_{X_k}\alpha$ and $\zeta$ is
increasing, we get
\begin{equation}\label{ie:zeta estimate}
e^{-\zeta(\sigma_t)}
\geq e^{-\zeta(\widehat{\alpha}_k-\frac{\epsilon}{2})}
\end{equation}
on $X_k$. Then $(\ref{ie:estimate on X_k})$, $(\ref{ie:varphi
estimate})$ and $(\ref{ie:zeta estimate})$ imply that
\begin{equation*}
\int_{X_k} |v_{k,\epsilon,t}|^2_{\omega,h_0}dV_{X,\omega}\leq
e^{\zeta(\widehat{\alpha}_k-\frac{\epsilon}{2})+(1+2\pi
n_k\delta_{\rho_t}) M_\psi }C^{-1}_{14}C(t).
\end{equation*}
Hence $\sqrt{2e^{\beta_0t_j}}v_{k,\epsilon,t_j}
\rightarrow0$ in $L^2$ as
$j\rightarrow+\infty$. Since $\mathrm{D}''F_{k,\epsilon,t}
=\sqrt{2e^{\beta_0t}}v_{k,\epsilon,t}$ on $X_k$, we
get $\mathrm{D}''F_{k,\epsilon}=0$ on $X_k$. Then $F_{k,\epsilon}$ is a
holomorphic section of $K_X\otimes L$ on $X_k$. In Step 5, we will
prove that $F_{k,\epsilon}=f$ on $X_k\cap Y^0$ by solving $\bar\partial$
locally.

\textbf{Step 5: solving $\bar\partial$ locally with $L^2$ estimates
and the end of the proof for the line bundle $L$.}

For any $x\in X_k\cap Y$, let $\Omega_x$ be as in Step 1. Let
\[\widehat{\Omega}_x\subset\subset (X_k\cap\Omega_x)\]
be a coordinate ball with center $x$. Since
the bundle $L$ is trivial on $\Omega_x$, $u_{k,\epsilon,t}$ and
$v_{k,\epsilon,t}$ can be regarded as forms on $\Omega_x$ with
values in $\mathbb{C}$ and the metric $h_0$ of $L$ on $\Omega_x$ can be
regarded as a positive smooth function.

It is easy to see that $C(t)\leq C_{18}$ for some positive
number $C_{18}$ independent of $t$ when $t$ is
small enough. Then it follows from $(\ref{ie:estimate on X_k})$,
$(\ref{ie:zeta estimate})$ and
$(\ref{ie:varphi estimate})$ that
\[\int_{\widehat{\Omega}_x}|v_{k,\epsilon,t}|^2 e^{-\psi}d\lambda \leq
C_{19}C_{18}\]
for some positive number $C_{19}$ independent of $t$ when
$t$ is small enough.

Since $\bar\partial v_{k,\epsilon,t}=0$ on $\widehat{\Omega}_x$ by
$(\ref{e:dbar on X_k})$, applying Lemma \ref{l:Hormander-dbar} to
the $(n,1)$-form
\[\sqrt{2e^{\beta_0t}}v_{k,\epsilon,t}\in
L^2_{(n,1)}(\widehat{\Omega}_x,\,e^{-\psi}),\] we get an $(n,0)$-form
$s_{k,\epsilon,t}\in L^2_{(n,0)}(\widehat{\Omega}_x,\,e^{-\psi})$ such
that \[\bar\partial s_{k,\epsilon,t}
=\sqrt{2e^{\beta_0t}}v_{k,\epsilon,t}\] on $\widehat{\Omega}_x$ and
\begin{equation}\label{ie:estimate v1}
\int_{\widehat{\Omega}_x}|s_{k,\epsilon,t}|^2
e^{-\psi}d\lambda \leq
C_{20}\int_{\widehat{\Omega}_x}|\sqrt{2e^{\beta_0t}} v_{k,\epsilon,t}|^2
e^{-\psi}d\lambda\leq 2C_{20}C_{19}C_{18}e^{\beta_0t}
\end{equation}
for some positive number $C_{20}$ independent of $t$. Hence
\begin{equation}\label{ie:estimate v2}
\int_{\widehat{\Omega}_x}|s_{k,\epsilon,t}|^2 d\lambda \leq
C_{21}e^{\beta_0t}
\end{equation}
for some positive number $C_{21}$ independent of $t$.

Now define $G_{k,\epsilon,t}=-u_{k,\epsilon,t}
-s_{k,\epsilon,t}
+\theta(\frac{ e^t}{ e^\psi + e^t})
\tilde{f}_t$ on $\widehat{\Omega}_x$. Then
$G_{k,\epsilon,t}=F_{k,\epsilon,t}
-s_{k,\epsilon,t}$ and $\bar\partial
G_{k,\epsilon,t}=0$. Hence $G_{k,\epsilon,t}$ is
holomorphic in $\widehat{\Omega}_x$. Therefore,
$u_{k,\epsilon,t}+s_{k,\epsilon,t}$ is smooth in
$\widehat{\Omega}_x$. Furthermore, we get from $(\ref{ie:F-no-weight-L2})$ and
$(\ref{ie:estimate v2})$ that
\begin{equation}\label{ie:G-estimate}
\int_{\widehat{\Omega}_x}|G_{k,\epsilon,t}|^2d\lambda
\leq2\int_{\widehat{\Omega}_x}|F_{k,\epsilon,t}|^2d\lambda
+2\int_{\widehat{\Omega}_x}|s_{k,\epsilon,t}|^2d\lambda \leq C_{22}
\end{equation}
for some positive number $C_{22}$ independent of $t$ when
$t$ is small enough.

We get
from $(\ref{ie:varphi estimate})$ and $(\ref{ie:estimate on X_k})$ that
\begin{equation*}
\int_{\widehat{\Omega}_x}\frac{|u_{k,\epsilon,t}|^2
e^{-\psi}}{R(\sigma_t)}d\lambda \leq
C_{23}C(t)\leq C_{23}C_{18}
\end{equation*}
for some positive number $C_{23}$ independent of $t$ when
$t$ is small enough. Since $R(\sigma_t)\leq
R(t-\epsilon)$ on $\widehat{\Omega}_x$ when $t$ is small enough, we have that
\begin{equation*}
\int_{\widehat{\Omega}_x}|u_{k,\epsilon,t}|^2 e^{-\psi}d\lambda \leq
C_{23}C_{18}R(t-\epsilon).
\end{equation*}

Therefore, combining the last inequality and $(\ref{ie:estimate
v1})$, we obtain that
\[\int_{\widehat{\Omega}_x}|u_{k,\epsilon,t}
+s_{k,\epsilon,t}|^2e^{-\psi}d\lambda \leq
2C_{23}C_{18}R(t-\epsilon) +4C_{20}C_{19}C_{18}e^{\beta_0t}.\]
Then the non-integrability of $e^{-\psi}$ along $\widehat{\Omega}_x\cap Y$ and
the smoothness of $u_{k,\epsilon,t}+s_{k,\epsilon,t}$
in $\widehat{\Omega}_x$ show that
$u_{k,\epsilon,t}+s_{k,\epsilon,t}=0$ on $\widehat{\Omega}_x\cap Y$ for any $t$.
Hence $G_{k,\epsilon,t}=f$ on $\widehat{\Omega}_x\cap Y^0$ for any $t$.

Since $s_{k,\epsilon,t_j}\rightarrow 0$ in $L^2_{(n,0)}(\widehat{\Omega}_x)$
by $(\ref{ie:estimate v2})$ and
$F_{k,\epsilon,t_j}\rightharpoonup F_{k,\epsilon}$ weakly in
$L^2_{(n,0)}(\widehat{\Omega}_x)$ as $j\rightarrow+\infty$,
$G_{k,\epsilon,t_j}\rightharpoonup F_{k,\epsilon}$ weakly in
$L^2_{(n,0)}(\widehat{\Omega}_x)$ as $j\rightarrow+\infty$. Hence it follows from
$(\ref{ie:G-estimate})$ and routine arguments with applying Montel's
theorem that a subsequence of
$\{G_{k,\epsilon,t_j}\}_{j=1}^{+\infty}$ converges to
$F_{k,\epsilon}$ uniformly on compact subsets of $\widehat{\Omega}_x$. Then
$F_{k,\epsilon}=f$ on $\widehat{\Omega}_x\cap Y^0$ and thereby on $X_k\cap Y^0$.

Since the positive continuous function $R$ is decreasing near $-\infty$, $e^tR(t)$ is bounded above near $-\infty$ and $\phi$ is locally bounded above, applying Montel's
theorem and extracting weak limits of $\{F_{k,\epsilon}\}_{k,\epsilon}$,
first as
$\epsilon\rightarrow 0$, and then as $k\rightarrow+\infty$, we get from
$(\ref{ie:estimate F1})$ a holomorphic section $F$ on $X$ with
values in $K_X\otimes L$ such that $F=f$ on $Y^0$ and
\begin{equation*}
\int_{X}\frac{|F|^2_{\omega,h}}{e^\psi
R( \psi)}dV_{X,\omega} \leq\bigg(\frac{\alpha_1}{R(\alpha_0)}+C_R\bigg)
\int_{Y^0}|f|^2_{\omega,h}dV_{X,\omega}[\psi].
\end{equation*}
Theorem \ref{t:Zhou-Zhu1} is thus proved for the line bundle $L$.

\textbf{Step 6: the proof for the vector bundle $E$.}

The proof for $E$ is similar but simpler. We only point out the main modifications by examining the proof for $L$.

In Step 1, we don't need to construct a family of special smooth extensions $\tilde{f}_t$ of $f$ since $h_E$ is smooth. Hence the strong openness property  and the key propositions are not needed. Delete Part II and Part III in Step 1 and replace the family of sections $\tilde{f}_{i,t}$ with a fixed local holomorphic extension $\tilde{f}_i$. Then $\tilde{f}_t$ becomes a fixed smooth extension $\tilde{f}=\sum\limits_{i=1}^N\xi_i\tilde{f}_i$. Then it is easy to see that $(\ref{ie:family-integral-limit})$, $(\ref{e:dbar of smooth extension})$, $(\ref{ie:estimate
of smooth extension1})$ and $(\ref{ie:estimate of smooth extension2})$ hold for $\tilde{f}_{i,t}=\tilde{f}_i$, $\tilde{f}_t=\tilde{f}$ and $\beta_1=0$.

Step 2 is not needed since $h_E$ is already smooth.

In Step 3, the negative term will not appear on the right hand side of $(\ref{ie:twisted cuvature})$ since $\delta_\rho=0$.

In Step 4, it is easy to prove the estimate $(\ref{ie:I1})$ for $I_{1,t}$ by the modified $(\ref{ie:family-integral-limit})$. It is also not hard to prove the estimate $(\ref{ie:I2})$ for $I_{2,t}$ by the modified $(\ref{e:dbar of smooth extension})$, $(\ref{ie:estimate
of smooth extension1})$ and $(\ref{ie:estimate of smooth extension2})$. $(\ref{e:limit-second-C(t)})$ can be easily obtained since $h_E$ is smooth.

Step 5 for $E$ is almost the same and Theorem \ref{t:Zhou-Zhu1} is thus proved for the vector bundle $E$.





\bibliographystyle{references}
\bibliography{xbib}

\end{document}